\newtheorem{lemma}{Lemma}
\newtheorem{theorem}{Theorem}
\newtheorem{assumption}{Assumption}
\newtheorem{remark}{Remark}
\allowdisplaybreaks \allowdisplaybreaks[4]
\title[Super-convergence on exponential integrator for SHE driven by fBm]{Super-convergence analysis on exponential integrator for stochastic heat equation driven by additive fractional Brownian motion}
\author{Jialin Hong}
\address{Academy of Mathematics and Systems Science, Chinese Academy of Sciences, Beijing 100190, China; School of Mathematical Sciences, University of Chinese Academy of Sciences, Beijing 100049, China}
\curraddr{}
\email{hjl@lsec.cc.ac.cn}
\thanks{}
\author{Chuying Huang}
\address{Academy of Mathematics and Systems Science, Chinese Academy of Sciences, Beijing 100190, China; School of Mathematical Sciences, University of Chinese Academy of Sciences, Beijing 100049, China}
\curraddr{}
\email{huangchuying@lsec.cc.ac.cn (Corresponding author)}
\thanks{Authors are funded by National Natural Science Foundation of China (NO. 11971470 and NO. 11871068).}
\subjclass[2010]{primary 60H35; secondary 60H07; 60H15}
\keywords{infinite dimensional fractional Brownain motion, super-convergent in time, Malliavin calculus, exponential integrator, stochastic heat equation}
\begin{document}
	\begin{abstract}
		In this paper, we consider the strong convergence order of the exponential integrator for the stochastic heat equation driven by an additive fractional Brownian motion with Hurst parameter $H\in(\frac12,1)$. By showing the strong order one of accuracy of the exponential integrator under appropriote assumptions, we present the first super-convergence result in temporal direction on full discretizations for stochastic partial differential equations driven by infinite dimensional fractional Brownian motions with Hurst parameter $H\in(\frac12,1)$. The proof is a combination of Malliavin calculus, the  $L^p(\Omega)$-estimate of the Skorohod integral and the smoothing effect of the Laplacian operator.
	\end{abstract}
\maketitle

\section{Introduction}
The fractional Brownian motion (fBm) with Hurst parameter $H\in(0,1)$ is a family of Gaussian processes, which extends the standard Brownian motion ($H=\frac12$). In particular, if $H\in(\frac12,1)$, the process exhibits long-range dependence properties and the increments are positively correlated. The recent development of the stochastic analysis has revealed that the fBm with Hurst parameter $H\in(\frac12,1)$ provides effective models for describing features of the randomness in various fields, such as hydrology,  telecommunications, traffic networks and financial markets; see e.g. \cite{Duncan09SIMA,FM17,Hu11AOP,Rev68,Nualart07JFA} and references therein. 
These applications motivate numerical researches about stochastic differential equations driven by additive fBms, among which the strong convergence analysis for numerical schemes is an important part.

In general, the strong convergence order of a numerical approximation for a stochastic differential equation is restricted by the regularity of the solution. If the order is consistent with the regularity of the solution of the original equation, then the strong convergence order is called optimal. It is a natural and interesting question whether the order can exceed the regularity. In particular, If the strong convergence order in temporal direction is larger than the exponent of temporal H\"older continuity of the solution, then we say that the numerical scheme is super-convergent in time. In finite dimensional cases, there have been several super-convergence results on numerical schemes for stochastic differential equations driven by additive fBms with Hurst parameter $H\in(\frac12,1)$. For example, the strong order one of accuracy of the Euler scheme is proved for equations with Lipschitz drifts in scalar cases \cite{N06} and in multi-dimensional cases \cite{MC11}, while the exponent of temporal H\"older continuity of the solution is not larger than the Hurst parameter of the fBm. For equations with singular drifts, \cite{hhkw} proves the strong order one of accuracy of the backward Euler scheme and applies the scheme to numerically solve the Cox--Ingersoll--Ross interest model driven by an fBm. To our best knowledge, however, there is no super-convergence result in temporal direction on numerical schemes for stochastic partial differential equations (SPDEs) driven by infinite dimensional fBms.

The goal of this paper is to investigate the super-convergence analysis on the exponential integrator approximating the mild solution of 
the stochastic heat equation (SHE) driven by an infinite dimensional fBm
\begin{align}\label{eq1}
\left\{
\begin{aligned}
dX_t&=\Delta X_t dt+ F(X_t)dt+dW^{{\bf Q}}_t,\quad t\in(0,T],\\
X_0&=u_0\in V.
\end{aligned}
\right.
\end{align}
The infinite dimensional fBm $W^{{\bf Q}}$ is defined by 
\begin{align}\label{WQ}
W^{{\bf Q}}_t:=\sum_{i=1}^{\infty}{\bf Q}^{\frac12} f_i \beta^i_t,\quad t\in[0,T],
\end{align}
where $\{\beta^i\}_{i=1}^{\infty}$ is a sequence of identically distributed and independent scalar fBms with Hurst parameter $H\in(\frac12,1)$, $\{f_i\}_{i=1}^{\infty}$ is an orthonormal basis of another separable Hilbert sapce $U$ and ${\bf Q}\in\mathcal{L}(U,V)$ is a self-adjoint, nonnegative definite and bounded linear operator. Denote by $\{S_t\}_{t\ge0}$ the analytic semigroup generated by $-\Delta$. Then the mild solution reads
\begin{align}\label{mild}
X_t=S_{t}X_0+\int_{0}^{t}S_{t-s}F(X_s)ds+\int_{0}^{t}S_{t-s}dW_s^{{\bf Q}},\quad t\in[0,T],
\end{align} 
where the stochastic integral is defined by the fractional calculus \cite{Duncan09SIMA}. 

In this paper, we focus on the case that 
 $\Delta$ is the Dirichlet Laplacian and $V=L^2(0,1)$ equipped with the inner product $\langle g,\tilde{g}\rangle_V:=\int_{0}^{1}g(x)\tilde{g}(x)dx$. 
Then the eigensystem of $A:=-\Delta$ is $\{\lambda_i,e_i\}_{i=1}^{\infty}$ with $\lambda_i=i^2\pi^2$ and $e_i(x)=\sqrt{2}\sin(i\pi x)$, where $\{e_i\}_{i=1}^{\infty}$ forms an orthonormal basis of $V$. 
Defining $U_0:={\bf Q}^{\frac12}U$, we denote by $\dot{V}^\theta$ the domain of $A^{\frac{\theta}{2}}$ endowed with the norm
\begin{align*}
\|x\|_{\dot{V}^\theta}:=\Big\|A^\frac\theta2 x\Big\|_V,\quad x\in \dot{V}^\theta,~\theta\in\mathbb{R}
\end{align*}
and by $(\mathcal{L}^0_2,\langle \cdot,\cdot\rangle_{\mathcal{L}^0_2})$ the space of Hilbert--Schmidt operators  from $U_0$ to $V$ equipped the inner product
\begin{align*}
\langle \Phi_1,\Phi_2\rangle_{\mathcal{L}^0_2}:=\sum_{i=1}^{\infty}\langle \Phi_1{\bf Q}^{\frac12}f_i,\Phi_2{\bf Q}^{\frac12}f_i\rangle_V.
\end{align*}
Our assumptions on \eqref{eq1} are stated as follows.
\begin{assumption}[initial value]\label{x0}
	There exists some $\gamma$ such that 
	\begin{align*}
	u_0\in \dot{V}^{2H+\gamma-1}.
	\end{align*}
\end{assumption}

\begin{assumption}[nonlinear term]\label{F}
The operator $F: V\rightarrow V$ is a Nemytskii operator associated with a function $f\in\mathbb{C}^3(\mathbb{R},\mathbb{R})$ 
such that 
$F(X)(x)=f(X(x))$, $x\in(0,1)$, $X\in V$,
and 
\begin{align*}
\sup_{x\in\mathbb{R}}|f'(x)|+\sup_{x\in\mathbb{R}}|f''(x)|+\sup_{x\in\mathbb{R}}|f'''(x)|<\infty.
\end{align*} 
\end{assumption}

\begin{assumption}[noise term]\label{Q}
	There exists some $\gamma$ such that
	\begin{align*}
	\left\|A^{\frac{\gamma-1}{2}}\right\|_{\mathcal{L}^0_2}<\infty.
	\end{align*}
\end{assumption}

In the following, we formulate our main result for a fully discrete scheme construted by spectral Galerkin method and exponential integrator. 

\begin{theorem}\label{main}
Suppose that $X$ is the mild solution of \eqref{eq1} and that $X^{K,M,N}$ is defined by scheme \eqref{eq4}. Under Assumptions \ref{x0}-\ref{Q} with $\gamma>\max\{3-2H,\frac32\}$, it holds that
\begin{align*}
\sup_{n=0,\cdots,N}\Big\| X_{t_n} -X^{K,M,N}_{t_{n}} \Big\|_{L^2(\Omega;V)}\le CN^{-1}+CM^{-2}+C\big\|{\bf Q}^\frac12 ({\rm Id}_U-\mathcal{P}_K)\big\|_{\mathcal{L}_2(U,\dot{V}^{-2H})},
\end{align*}
where ${\rm Id}_U$ is the identity operator on $U$ and $\mathcal{P}_K$ is the project operator from $U$ onto $U_K:={\rm span}\{f_i:i=1,\cdots,K\}$.
\end{theorem}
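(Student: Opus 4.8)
The error splits into three parts: a spatial Galerkin error (the $M^{-2}$ term, controlled by the eigenvalue decay $\lambda_M \sim M^2$ and the spatial regularity furnished by Assumptions \ref{x0} and \ref{Q}), a noise-truncation error in $U$ (the $\mathcal{P}_K$ term, which is essentially by definition of the scheme once one has an $L^2(\Omega)$-isometry-type bound for the stochastic convolution of the truncated-in-space semigroup), and the temporal error (the $N^{-1}$ term), which is the heart of the matter. I would first dispose of the two easy terms by a triangle-inequality decomposition $X_{t_n}-X^{K,M,N}_{t_n} = (X_{t_n}-X^M_{t_n}) + (X^M_{t_n}-X^{K,M}_{t_n}) + (X^{K,M}_{t_n}-X^{K,M,N}_{t_n})$, where $X^M$ is the spectral Galerkin approximation and $X^{K,M}$ additionally truncates the noise; the first two differences are standard and use only the regularity estimates $\|X_t\|_{L^2(\Omega;\dot V^{2H+\gamma-1})}<\infty$ that should follow from Assumptions \ref{x0}--\ref{Q} together with the $L^p(\Omega)$-bound on the Skorohod integral.

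For the temporal error I would set up a discrete Gr\"onwall argument comparing $X^{K,M}_{t_{n+1}}$ with $X^{K,M,N}_{t_{n+1}}$. Writing the exact mild solution of the finite-dimensional system over one step $[t_n,t_{n+1}]$ as $X^{K,M}_{t_{n+1}} = S^M_{\tau} X^{K,M}_{t_n} + \int_{t_n}^{t_{n+1}} S^M_{t_{n+1}-s} F^M(X^{K,M}_s)\,ds + \int_{t_n}^{t_{n+1}} S^M_{t_{n+1}-s}\,dW^{{\bf Q},K}_s$ (with $\tau = T/N$), and the scheme presumably freezing $F$ at the left endpoint, $X^{K,M,N}_{t_{n+1}} = S^M_\tau X^{K,M,N}_{t_n} + A_M^{-1}(S^M_\tau - {\rm Id}) F^M(X^{K,M,N}_{t_n}) + \int_{t_n}^{t_{n+1}} S^M_{t_{n+1}-s}\,dW^{{\bf Q},K}_s$, the stochastic increments cancel exactly. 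The remaining local error is the deterministic drift discrepancy $\int_{t_n}^{t_{n+1}} S^M_{t_{n+1}-s}\big(F^M(X^{K,M}_s) - F^M(X^{K,M}_{t_n})\big)\,ds$. Here a crude bound gives only order $\tau^{1/2+(2H-1)}$ type behaviour through $\|X^{K,M}_s - X^{K,M}_{t_n}\|$, which is \emph{not} enough for order one. The gain to strong order $N^{-1}$ must come from exploiting cancellation: expand $F^M(X^{K,M}_s)-F^M(X^{K,M}_{t_n})$ by the mean value theorem and the mild equation, isolate the stochastic-convolution contribution $\int_{t_n}^{s} S^M_{s-r}\,dW^{{\bf Q},K}_r$, and observe that this term has mean zero but must be controlled \emph{inside} a nonlinear function composed with a $t_n$-measurable weight — exactly the situation calling for Malliavin integration by parts.

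**The main obstacle, and how I would handle it.** The crux is estimating terms of the form $\mathbb{E}\big\langle G\big(X^{K,M}_{t_n}\big)\,h, \int_{t_n}^{s} S^M_{s-r}\,dW^{{\bf Q},K}_r\big\rangle_V$ where $G$ is (a power series of) $F'$ and $h$ is another slowly varying factor; naively this is $O(\tau^{H})$ per unit of the outer $ds$-integral, giving $O(\tau^{1+H})$ locally and $O(\tau^{H})$ globally after summing $N=T/\tau$ steps over an interval — not order one. The fix is to write the Skorohod/Young stochastic integral against $dW^{{\bf Q},K}$, move it off using the duality relation $\mathbb{E}[\langle \Psi, \delta(v)\rangle] = \mathbb{E}[\langle D\Psi, v\rangle_{\mathcal H}]$ for the Malliavin derivative, and use that $D_{r}X^{K,M}_{t_n}$ for $r>t_n$ vanishes by adaptedness while $D_r$ of the single-step stochastic convolution is just $S^M_{s-r}{\bf Q}^{1/2}\mathcal{P}_K$, a smoothing operator whose contribution gains an extra half-power via the $\dot V^\theta$-smoothing estimate $\|A^\theta S_t\| \le C t^{-\theta}$ and the Hurst-parameter factor, promoting the $s$-$r$ increment estimate from $\tau^H$ to $\tau^{1+\varepsilon}$-integrable behaviour. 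Summing the resulting local errors of genuine order $\tau^2$ over $N$ steps, combined with a discrete Gr\"onwall inequality to absorb the $X^{K,M,N}_{t_n}$-dependence of $F$, then yields the $N^{-1}$ bound. The bookkeeping of which regularity index is consumed where — this is precisely why the hypothesis $\gamma > \max\{3-2H,\tfrac32\}$ appears — will be the most delicate part, and I would track a single running estimate $\|X_{t}-X^{K,M,N}_{t_n}\|_{L^2(\Omega;V)} \le \tau\,C + (\text{spatial terms}) + C\tau\sum_{k<n}\|X_{t_k}-X^{K,M,N}_{t_k}\|_{L^2(\Omega;V)}$ and close it by Gr\"onwall.
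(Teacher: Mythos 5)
Your high-level architecture matches the paper's: a three-way splitting into noise truncation, spectral Galerkin error, and temporal error (the paper truncates the noise first and then projects in space, but that ordering is immaterial), a Gr\"onwall closure, a Taylor expansion of $F$ around the left endpoint, and Malliavin integration by parts to beat the naive $O(\tau^H)$ barrier on the stochastic-convolution term. However, there are three concrete gaps.

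First, you have assumed a scheme in which the one-step noise increment is the exact stochastic convolution $\int_{t_n}^{t_{n+1}}S^M_{t_{n+1}-s}\,dW^{{\bf Q},K}_s$, so that ``the stochastic increments cancel exactly.'' The scheme \eqref{eq4} actually uses $S^M_h\,\Delta W^{{\bf Q},K}_n$ (and $S^M_h F(\cdot)h$ for the drift), so the mismatch terms $\int_0^{t_n}(S^M_{t_n-s}-S^M_{t_n-\lfloor s\rfloor})\,dW^{{\bf Q},K}_s$ and its drift analogue do not vanish; they are the terms $J_4$ and $J_2$ in the paper and are of order $h^{(2H+\gamma-1)/2}$, which is where the hypothesis $\gamma\ge 3-2H$ is first consumed. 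Your proof as written does not cover the stated scheme.

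Second, and more seriously, your mechanism for the key term is not quite right. The Malliavin duality $\mathbb{E}[\Psi\,\delta(v)]=\mathbb{E}[\langle D\Psi,v\rangle_{\mathcal H}]$ shows that certain \emph{weighted first moments} of the local error are $O(\tau^2)$; it does not make the $L^2(\Omega)$-norm of the local error $O(\tau^2)$ (that norm is genuinely $\tau^{1+H}$), so ``summing local errors of genuine order $\tau^2$ over $N$ steps'' by the triangle inequality is not available. One must instead expand $\|\sum_n e_n\|_{L^2(\Omega;V)}^2$ as a double sum/integral over $(s_1,s_2)$ and apply the second-order integration-by-parts identity $\mathbb{E}[Y\delta(g_1)\delta(g_2)]=\mathbb{E}[\langle D\langle DY,g_2\rangle_{\mathcal H},g_1\rangle_{\mathcal H}]+\mathbb{E}[Y\langle g_1,g_2\rangle_{\mathcal H}]$ (Lemma~\ref{lm-EIBP}). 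The cross terms are coupled through the nonlocal kernel $\phi(\sigma_1,\sigma_2)=\alpha_H|\sigma_1-\sigma_2|^{2H-2}$ and are \emph{not} individually $O(\tau^2)$ when $s_1,s_2$ lie in nearby subintervals; the global $h^2$ bound is rescued only by the decay $e^{-\lambda_i(t_n-s_1)}e^{-\lambda_i(t_n-s_2)}$ and summability over the modes (estimates \eqref{J13_1}--\eqref{J13_2}). This also forces you to control \emph{second} Malliavin derivatives of $X^{K,M}$ uniformly (Lemma~\ref{lm-esti-M}) and to use a pointwise-multiplication estimate in $\dot V^r$, $r>\tfrac12$, which is precisely where $\gamma>\tfrac32$ enters. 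None of this structure is visible in your sketch.

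Third, your expansion of $F$ stops at the mean value theorem, leaving a remainder quadratic in $X^{K,M}_s-X^{K,M}_{\lfloor s\rfloor}$ multiplied by a random weight. Controlling it at order $\tau$ requires the temporal increment bound in $L^4(\Omega;V)$ with exponent $\tfrac12$ (Theorem~\ref{tm-regu4}), whose proof via the $L^p$-estimate of the Skorohod integral and a H\"older trick on the kernel $\phi$ is itself nontrivial and again uses $\gamma>3-2H$; this ingredient is absent from your proposal.
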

Here and in the rest of the paper, we use $C$ as a generic constant which is independent of integers $K,M,N$  in \eqref{eq2}-\eqref{eq4} and may be different from line to line. 
Since the exponent of temporal H\"older continuity of the solution proved in Section \ref{sec3} is the same as the Hurst parameter $H$, Theorem \ref{main} indicates that the exponential integrator is super-convergent in time with strong order one of accuracy. We also remark that the exponential integrator does not require the CFL-type condition appearing in most of explicit methods for SPDEs.

As far as we know, Theorem \ref{main} is the first super-convergence result in  temporal direction on full discretizations for SPDEs driven by infinite dimensional fBms with Hurst parameter $H\in(\frac12,1)$. If the noise is less regular with $\gamma\le \max\{3-2H,\frac32\}$, one can obtain the optimal strong convergence order in temporal direction based on \cite{wang17BIT}. For equations with additive noise which is fractional in space and white in time, we refer to \cite{liu17SINUM,liu18IMA} and references therein for optimal error analysis on numerical approximations. As $H$ tends to $\frac12$, the parameter $\gamma$ goes to $2$ which coincides with the assumption on the SHEs driven by infinite dimensional standard Brownian motions for the strong order one of accuracy of the exponential integrator  \cite{J11SINUM,Kloeden11,Kruse}. 

Compared with the standard Brownian setting, the main diffuculty in the super-convergence analysis on full discretizations for SHEs driven by infinite dimensional fBms lies in that the fBm is neither a Markov process nor a semi-martingale such that the Burkholder--Davis--Gundy inequality is unavailable. As a consequence, we need to take a different strategy to estimate the terms $J_{13}$ and $J_{14}$ originated from the stochastic Taylor's expansion in Lemma \ref{J1}. For term $J_{13}$ which involves a stochastic integral with respect to the fBm, we utilize the Malliavin calculus to sum up the accumulated errors first and then take the expectation, instead of to estimate the strong order of accuracy of the local error first and then do the summation. For term $J_{14}$, to prove the temporal regularity of the mild solution in $L^4(\Omega;V)$,  we combine the $L^p(\Omega)$-estimate of the Skorohod integral with respect to the fBm and the smoothing effect of the Laplacian operator to overcome the difficulty from the dependency of increments of fBm, i.e., to eliminate the influence of the kernel $\phi$ of the covariance of fBm.

The paper is structured as follows. In Section \ref{sec2}, we introduce the Malliavin calculus with respect to the fBm. In Section \ref{sec3}, we show the regularity of the mild solution of \eqref{eq1}. In Section \ref{sec4}, we prove the optimal strong convergence order of the spectral Galerkin method and a priori estimates for the approximate mild solution obtained by the spatial semi-discretization. In Section \ref{sec5}, we establish the super-convergence result in temporal direction on the exponential integrator. Section \ref{sec6} gives a conclusion and future works.

\section{Preliminaries on Malliavin calculus}\label{sec2}
This section introduces the definition of the fBm and  the associated Mallavin calculus. For  more details, we refer to \cite{DT13,Nualart,NS09SPA}.

The $K$-dimensional fBm $\{B^K_t=(\beta^1_t,\cdots,\beta^K_t)\}_{t\in[0,T]}$ with Hurst parameter $H\in(\frac12,1)$ is a centered Gaussian process with continuous sample paths and the covariance
\begin{align*}
\mathbb{E}\big[\beta^i_t\beta^j_s\big]:=\frac12\left(t^{2H}+s^{2H}-|t-s|^{2H}\right)\mathds{1}_{\{i\}}(j)=\left(\int_{0}^{t}\int_{0}^{s}\phi(u,v)dudv\right)\mathds{1}_{\{i\}}(j),
\end{align*}
where
$\phi(u,v):=\alpha_H|u-v|^{2H-2}$, $\alpha_H:=H(2H-1)>0$, $i,j=1\cdots,K$, and $\mathds{1}_{\{i\}}$ is the indicator function.
Define an inner product $\langle\cdot,\cdot\rangle_\mathcal{H}$ by 
\begin{align*}
\langle\left(\mathds{1}_{[0,t_1]},\cdots,\mathds{1}_{[0,t_K]}\right),\left(\mathds{1}_{[0,s_1]},\cdots,\mathds{1}_{[0,s_K]}\right)\rangle_{\mathcal{H}}:=\sum_{i=1}^{K}\mathbb{E}\big[\beta^i_{t_i}\beta^i_{s_i}\big],
\end{align*}
and let the Hilbert space $\left(\mathcal{H},\langle\cdot,\cdot\rangle_{\mathcal{H}}\right)$ be the closure of the space of all $\mathbb{R}^K$-valued step functions on $[0,T]$ with respect to $\langle\cdot,\cdot\rangle_{\mathcal{H}}$. 
Then by extending the mapping
$\left(\mathds{1}_{[0,t_1]},\cdots,\mathds{1}_{[0,t_K]}\right) \mapsto \sum_{i=1}^{K}\beta^i_{t_i}$, we obtain an isometry map $\varphi \mapsto B^K(\varphi)$, which is from $\mathcal{H}$ to the Gaussian space associated with $B^K$.


For the random variable 
\begin{align}\label{Y}
Y=y\big(B^K(\varphi^1),\cdots,B^K(\varphi^M)\big),
\end{align}
where $\varphi^1,\cdots,\varphi^M\in \mathcal{H}$ and $y:\mathbb{R}^M \rightarrow\mathbb{R}$ is bounded with bounded derivatives of any order, the Malliavin derivative of $Y$ is an $\mathcal{H}$-valued random variable defined by 
\begin{align*}
D_tY:=\sum_{i=1}^{M}\frac{\partial y}{\partial x_i}\big(B^K(\varphi^1),\cdots,B^K(\varphi^M)\big)\mathbf \varphi^i_t,\quad t\in [0,T].
\end{align*}
In particular, we denote by $(DY)^{i}$ the Malliavin derivative of $Y$ with respect to $\beta^i$, $i=1,\cdots,K$.
For $p\ge 1$, define $\mathbb{D}^{1,p}$ as the Sobolev space which is the closure of the set containing random variables in the form of \eqref{Y} with the norm 
\begin{align*}
\|Y\|_{\mathbb{D}^{1,p}}:=\Big( \mathbb{E}\big[|Y|^p\big] + \mathbb{E}\big[\|DY\|_{\mathcal{H}}^p\big]  \Big)^{\frac1p}.
\end{align*}
The chain rule holds so that for $\tilde{f}$ with bounded derivative and $Y\in \mathbb{D}^{1,2}$,
\begin{align*}
D\tilde{f}(Y)=\tilde{f}'(Y)DY.
\end{align*}

Let $\delta$ be the adjoint operator of the derivative operator $D$. For an $\mathcal{H}$-valued random variable $\varphi\in L^2(\Omega;\mathcal{H})$, if
\begin{align*}
\big| \mathbb{E}\big[\langle\varphi,DY\rangle_{\mathcal{H}}\big]  \big|\le C(\varphi) \|Y\|_{L^2(\Omega;\mathbb{R})},  \quad \forall~Y\in \mathbb{D}^{1,2},
\end{align*}
we say $\varphi\in {\rm Dom}(\delta)$. Then $\delta(\varphi)\in L^2(\Omega;\mathbb{R})$ is defined by the random variable satisfying
\begin{align}\label{IBP}
\mathbb{E}\big[\langle\varphi,DY\rangle_{\mathcal{H}}\big]=\mathbb{E}\big[Y\delta(\varphi)\big], \quad \forall~Y\in \mathbb{D}^{1,2}.
\end{align}
Indeed, the definition of the Malliavin derivative can be extended to $\mathcal{H}$-valued random variables. Then the space $\mathbb{D}^{1,p}$ can be extended to $\mathbb{D}^{1,p}(\mathcal{H})$ with the norm 
\begin{align*}
\|Z\|_{\mathbb{D}^{1,p}(\mathcal{H})}:=\Big( \mathbb{E}\big[\|Z\|_{\mathcal{H}}^p\big] + \mathbb{E}\big[\|DZ\|_{\mathcal{H}\otimes\mathcal{H}}^p\big]  \Big)^{\frac1p}.
\end{align*}
According to \cite[Proposition 1.3.1]{Nualart}, we have  $\mathbb{D}^{1,2}(\mathcal{H})\subset{\rm Dom}(\delta)$.
Moreover, the Skorohod integral of $\varphi$ with respect to fBm is
\begin{align*}
\int_{0}^{T}\varphi_t\delta B^K_t:=\delta(\varphi),\quad \varphi\in {\rm Dom}(\delta),
\end{align*}
and the integration by parts formula holds that
\begin{align*}
\delta(Y\varphi)=Y\delta(\varphi)-\langle DY,\varphi\rangle_{\mathcal{H}},
\end{align*}
if $\varphi\in {\rm Dom}(\delta)$, $Y\in \mathbb{D}^{1,2}$ and $Y\varphi \in L^2(\Omega;\mathcal{H})$. 

The following lemmas are useful for us to deal with  the stochastic integrals in the regularity analysis and the error estimate.

\begin{lemma}(see also \cite[Lemma 1]{MC11})\label{lm-EIBP}
For $g_1,g_2\in \mathcal{H}$, it holds that 
\begin{align*}
\mathbb{E}\big[Y\delta(g_1)\delta(g_2)\big]=\mathbb{E}\big[\langle D[\langle D[Y],g_2\rangle_{\mathcal{H}}],g_1\rangle_{\mathcal{H}}\big]+\mathbb{E}\big[Y\langle g_1,g_2\rangle_{\mathcal{H}}\big].
\end{align*}
\end{lemma}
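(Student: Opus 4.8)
The plan is to derive the identity from two successive applications of the duality relation \eqref{IBP}, exploiting that $g_1,g_2$ are deterministic elements of $\mathcal{H}$. The key elementary fact is that for a deterministic $h\in\mathcal{H}$ the Skorohod integral reduces to the Wiener integral $\delta(h)=B^K(h)$, which belongs to $\mathbb{D}^{1,2}$ and satisfies $D[\delta(h)]=h$. This is precisely the mechanism that converts the Malliavin derivative of each $\delta(g_i)$ factor into the constant $g_i$, thereby generating both the scalar inner-product term $\mathbb{E}[Y\langle g_1,g_2\rangle_{\mathcal{H}}]$ and the iterated-derivative term on the right-hand side.

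First I would regroup the triple product by setting $Z:=Y\delta(g_1)$ and applying \eqref{IBP} with the deterministic integrand $g_2$ in the role of $\varphi$ and $Z$ in the role of the test variable, which gives
\[
\mathbb{E}\big[Y\delta(g_1)\delta(g_2)\big]
=\mathbb{E}\big[Z\,\delta(g_2)\big]
=\mathbb{E}\big[\langle DZ,g_2\rangle_{\mathcal{H}}\big].
\]
The product rule for the Malliavin derivative together with $D[\delta(g_1)]=g_1$ yields
\[
DZ=D\big[Y\delta(g_1)\big]=\delta(g_1)\,DY+Y\,g_1,
\]
so that, pairing with $g_2$,
\[
\langle DZ,g_2\rangle_{\mathcal{H}}
=\delta(g_1)\,\langle D[Y],g_2\rangle_{\mathcal{H}}+Y\,\langle g_1,g_2\rangle_{\mathcal{H}}.
\]
Taking expectations immediately isolates the last summand $\mathbb{E}[Y\langle g_1,g_2\rangle_{\mathcal{H}}]$ and leaves $\mathbb{E}[\delta(g_1)\langle D[Y],g_2\rangle_{\mathcal{H}}]$. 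To this remaining term I would apply \eqref{IBP} a second time, now with the scalar random variable $\langle D[Y],g_2\rangle_{\mathcal{H}}$ in the role of the test variable and the deterministic integrand $g_1$ in the role of $\varphi$, obtaining
\[
\mathbb{E}\big[\delta(g_1)\,\langle D[Y],g_2\rangle_{\mathcal{H}}\big]
=\mathbb{E}\big[\langle D[\langle D[Y],g_2\rangle_{\mathcal{H}}],g_1\rangle_{\mathcal{H}}\big],
\]
which is exactly the first term on the right-hand side of the claim. Summing the two contributions completes the derivation.

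The main obstacle is not the algebra but the bookkeeping of domains and integrability required to license each step. I would first prove the identity for a smooth functional $Y$ of the form \eqref{Y}, for which all the operations are transparent: here one checks, using the product rule and $D[\delta(g_i)]=g_i$, that $Z=Y\delta(g_1)\in\mathbb{D}^{1,2}\subset{\rm Dom}(\delta)$ and that $\langle D[Y],g_2\rangle_{\mathcal{H}}\in\mathbb{D}^{1,2}$, so that both invocations of \eqref{IBP} are legitimate and the two displayed applications are valid. The general case then follows by the standard density argument, approximating $Y$ by smooth functionals and passing to the limit, provided $Y$ carries enough Malliavin regularity (namely $Y$ in the natural second-order Malliavin--Sobolev space $\mathbb{D}^{2,p}$ with $p$ large relative to the Gaussian integrability of $\delta(g_1)$ and $\delta(g_2)$) for the left-hand side and both right-hand side terms to be finite and stable under the approximation.
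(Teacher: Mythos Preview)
Your argument is correct and is essentially identical to the paper's own proof: two applications of the duality relation \eqref{IBP} with the product rule $D[Y\delta(g_1)]=\delta(g_1)DY+Yg_1$ in between. The paper presents exactly these three lines without the additional domain and density discussion you append.
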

\begin{proof}
Using \eqref{IBP} and the chain rule for the Malliavin derivative, we obtain
\begin{align*}
\mathbb{E}\big[Y\delta(g_1)\delta(g_2)\big]&=\mathbb{E}\big[\langle D[Y\delta(g_1)],g_2\rangle_{\mathcal{H}}\big]\\
&=\mathbb{E}\big[\langle D[Y],g_2\rangle_{\mathcal{H}}\delta(g_1)\big]+\mathbb{E}\big[Y\langle g_1,g_2\rangle_{\mathcal{H}}\big]\\
&=\mathbb{E}\big[\langle D[\langle D[Y],g_2\rangle_{\mathcal{H}}],g_1\rangle_{\mathcal{H}}\big]+\mathbb{E}\big[Y\langle  g_1,g_2\rangle_{\mathcal{H}}\big].
\end{align*}
\end{proof}

\begin{lemma}(\cite[Proposition 1.3.1 and Proposition 1.5.8]{Nualart})\label{Lp}
	Let $p>1$, 
	\begin{align*}
	\| \varphi\|^2_{|\mathcal{H}|}&:=\sum_{i=1}^{K}\int_{[0,T]^2}|\varphi^i_u||\varphi^i_v|\phi(u,v)du dv,\\
	\|D\varphi\|^2_{|\mathcal{H}|\otimes|\mathcal{H}|}&:=\sum_{i,j=1}^{K}\int_{[0,T]^4}\Big|\left(D_{v_1}\varphi^i_{u_1}\right)^{j}\Big|\Big|\left(D_{v_2}\varphi^i_{u_2}\right)^{j}\Big|
	\phi(v_1,v_2)\phi(u_1,u_2)dv_1du_1dv_2du_2. 
	\end{align*}
	If a process $\varphi\in {\rm Dom}(\delta)$ satisfying $ \big\|\mathbb{E}[\varphi]\big\|^p_{|\mathcal{H}|}+ \mathbb{E}\big[
	\|D\varphi\|^p_{|\mathcal{H}|\otimes|\mathcal{H}|}\big]< \infty$, then 
	\begin{align*}
	\mathbb{E}\left[  \left| \int_{0}^{T}\varphi_t \delta B^K_t \right| ^p\right]\le
	C(H,p) \left(   \big\| \mathbb{E}[\varphi]\big\|^p_{|\mathcal{H}|}+ \mathbb{E}\big[
	\|D\varphi\|^p_{|\mathcal{H}|\otimes|\mathcal{H}|}\big]\right).
	\end{align*}
	
	In particular, if $K=1$ and $p=2$, it holds that
	\begin{align*}
	\mathbb{E}\left[  \left| \int_{0}^{T}\varphi_t \delta B^1_t \right| ^2\right]
	=&\mathbb{E}\Bigg[
	\int_{[0,T]^2}\varphi_u\varphi_v\phi(u,v)du dv\\
	&+\int_{[0,T]^4}D_{v_1}\varphi_{u_1}D_{v_2}\varphi_{u_2}
\phi(v_1,u_1)\phi(v_2,u_2)dv_1du_1dv_2du_2\Bigg].
	\end{align*}
\end{lemma}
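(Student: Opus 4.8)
The plan is to obtain both assertions by specializing the standard $L^p$ theory of the divergence operator $\delta$ to the fractional kernel $\phi$, the crucial structural input being that $\phi$ is \emph{nonnegative}. For the general $L^p$ bound I would invoke Meyer's inequalities for the Skorohod integral \cite[Proposition 1.5.8]{Nualart}, which control $\big\|\delta(\varphi)\big\|_{L^p(\Omega)}$ by the $\mathbb{D}^{1,p}(\mathcal{H})$-norm of $\varphi$. Splitting $\varphi=\mathbb{E}[\varphi]+(\varphi-\mathbb{E}[\varphi])$, the deterministic part is an ordinary Wiener integral, so $\delta(\mathbb{E}[\varphi])=B^K(\mathbb{E}[\varphi])$ is Gaussian and $\big\|\delta(\mathbb{E}[\varphi])\big\|_{L^p}\le C(H,p)\|\mathbb{E}[\varphi]\|_{\mathcal{H}}$, while the centered part is estimated by the Poincar\'e-type half of the inequality, yielding a bound in terms of $\mathbb{E}\big[\|D\varphi\|_{\mathcal{H}\otimes\mathcal{H}}^p\big]$. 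This produces the desired estimate but with the genuine Hilbert norms $\|\cdot\|_{\mathcal{H}}$ and $\|\cdot\|_{\mathcal{H}\otimes\mathcal{H}}$ in place of $\|\cdot\|_{|\mathcal{H}|}$ and $\|\cdot\|_{|\mathcal{H}|\otimes|\mathcal{H}|}$.

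The decisive step, and where $H>\frac12$ is used, is to dominate these Hilbert norms by their absolute-value counterparts. Since $\alpha_H=H(2H-1)>0$, the kernel $\phi(u,v)=\alpha_H|u-v|^{2H-2}$ is nonnegative, so for each scalar component $\psi=\varphi^i$,
\begin{align*}
\|\psi\|_{\mathcal{H}}^2=\int_{[0,T]^2}\psi_u\psi_v\,\phi(u,v)\,du\,dv\le\int_{[0,T]^2}|\psi_u|\,|\psi_v|\,\phi(u,v)\,du\,dv=\|\psi\|_{|\mathcal{H}|}^2,
\end{align*}
and the same pointwise comparison, applied in both variables of the tensor product, gives $\|\cdot\|_{\mathcal{H}\otimes\mathcal{H}}\le\|\cdot\|_{|\mathcal{H}|\otimes|\mathcal{H}|}$. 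Summing over the $K$ independent components for the first norm and over the pairs $i,j$ for the derivative norm converts the Hilbert-norm estimate into the stated one. The finiteness hypothesis $\|\mathbb{E}[\varphi]\|_{|\mathcal{H}|}^p+\mathbb{E}\big[\|D\varphi\|_{|\mathcal{H}|\otimes|\mathcal{H}|}^p\big]<\infty$ then forces $\varphi\in\mathbb{D}^{1,p}(\mathcal{H})\subset{\rm Dom}(\delta)$, so every step above is justified.

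For the special case $K=1$, $p=2$ I would prove the claimed \emph{equality} from the $L^2$ isometry rather than from the inequality. Applying the duality \eqref{IBP} with $Y=\delta(\varphi)$ together with the commutation relation $D\delta(\varphi)=\varphi+\delta(D\varphi)$ gives
\begin{align*}
\mathbb{E}\big[\delta(\varphi)^2\big]=\mathbb{E}\big[\langle\varphi,D\delta(\varphi)\rangle_{\mathcal{H}}\big]=\mathbb{E}\big[\|\varphi\|_{\mathcal{H}}^2\big]+\mathbb{E}\big[\langle D\varphi,\widetilde{D\varphi}\rangle_{\mathcal{H}\otimes\mathcal{H}}\big],
\end{align*}
where $\widetilde{D\varphi}$ is the transpose of $D\varphi$ as an element of $\mathcal{H}\otimes\mathcal{H}$; this is exactly \cite[Proposition 1.3.1]{Nualart} with $u=v=\varphi$, and is the same integration-by-parts bookkeeping already carried out for deterministic arguments in Lemma \ref{lm-EIBP}. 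Because $H>\frac12$ makes $\phi$ locally integrable, all the resulting multiple integrals converge absolutely, so both inner products may be written out as the Lebesgue integrals against $\phi$ and $\phi\otimes\phi$ displayed in the statement, by Fubini's theorem and the symmetry of $\phi$.

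The main obstacle is conceptual rather than computational: it is the passage from the Hilbertian framework, in which Meyer's inequalities and the space $\mathbb{D}^{1,p}(\mathcal{H})$ are formulated, to the non-Hilbertian seminorms $|\mathcal{H}|$ and $|\mathcal{H}|\otimes|\mathcal{H}|$ in which both the hypotheses and the conclusion are phrased. The comparison $\|\cdot\|_{\mathcal{H}}\le\|\cdot\|_{|\mathcal{H}|}$ is elementary once one notices $\phi\ge0$, but this enlargement of the admissible class of integrands is exactly what is needed later: in the regularity and error analysis the Skorohod integrands can only be controlled through the absolute-value quantities, so recording the estimate in the $|\mathcal{H}|$-form---legitimate precisely because $H>\frac12$---is what makes the lemma usable.
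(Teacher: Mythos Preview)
The paper does not prove this lemma at all: it is stated with a bare citation to \cite[Proposition 1.3.1 and Proposition 1.5.8]{Nualart} and used as a black box. Your proposal is therefore not competing with an argument in the paper but rather filling in the derivation that the authors chose to omit. As such it is essentially correct and follows the standard route: invoke Meyer's inequality in the form $\|\delta(\varphi)\|_{L^p}\le C\big(\|\mathbb{E}[\varphi]\|_{\mathcal{H}}+\|D\varphi\|_{L^p(\Omega;\mathcal{H}\otimes\mathcal{H})}\big)$, then use $\phi\ge 0$ (which is where $H>\frac12$ enters) to pass from the genuine $\mathcal{H}$-norms to the $|\mathcal{H}|$-seminorms, and for $p=2$, $K=1$ use the $L^2$ isometry of $\delta$ directly. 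One minor remark: the explicit decomposition $\varphi=\mathbb{E}[\varphi]+(\varphi-\mathbb{E}[\varphi])$ is not really needed, since Proposition~1.5.8 in \cite{Nualart} already delivers the bound in exactly the form $\|\mathbb{E}[\varphi]\|_{\mathcal{H}}+\|D\varphi\|_{L^p}$; your split just reproves that structure. A second, more substantive remark: in the $p=2$ equality you correctly identify that the second term involves the \emph{transpose} $\widetilde{D\varphi}$, which in integral form pairs the derivative variable of one factor with the process variable of the other. You should be aware that the kernel pairing displayed in the paper's statement, $\phi(v_1,u_1)\phi(v_2,u_2)$, does not quite match that transpose structure (the standard formula gives $\phi(v_1,u_2)\phi(u_1,v_2)$); this looks like a typographical slip in the paper rather than a flaw in your argument, and it does not affect how the lemma is actually used later (only the inequality is invoked, and only for deterministic integrands where $D\varphi=0$).
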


\section{Well-posedness and regularity}\label{sec3}
In this section, we show the regularity of the mild solution of \eqref{eq1}, which relies on the parameters $\gamma$ and $H$. We begin with several Lemmas concerned about the smoothing effect of the Laplacian operator and the isometry of the stochastic integral with respect to the fBm.

\begin{lemma}(\cite[Lemma B.9]{Kruse})\label{lm-A}
For any $0<t\le T$, $\nu \le 0\le \mu$, $0\le \alpha\le 1$ and $x\in V$, it holds that
\begin{align*}
\|A^\nu \|_{\mathcal{L}(V)}\le C,\qquad
\|&A^{-\alpha} (S_{t}-{\rm Id}_{V}) \|_{\mathcal{L}(V)}\le Ct^\alpha, \\
\|A^\mu S_{t}\|_{\mathcal{L}(V)}\le C t^{-\mu},\qquad
\bigg\|&A^\alpha \int_{s}^{t}S_{t-\sigma}xd\sigma\bigg\|_V\le C|t-s|^{1-\alpha} \|x\|_V.
\end{align*}
\end{lemma}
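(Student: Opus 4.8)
The plan is to exploit the fact that $A=-\Delta$ is a positive self-adjoint operator with the explicit eigensystem $\{\lambda_i,e_i\}_{i=1}^\infty$, $\lambda_i=i^2\pi^2$, so that every operator appearing in the statement acts diagonally on the orthonormal basis $\{e_i\}$. Concretely, $A^\theta e_i=\lambda_i^\theta e_i$ and $S_te_i=e^{-\lambda_i t}e_i$, hence for $x=\sum_i x_ie_i\in V$ one has $\|A^\theta S_t x\|_V^2=\sum_i \lambda_i^{2\theta}e^{-2\lambda_i t}x_i^2$ and analogous diagonal formulas for the other operators. Consequently each operator-norm bound reduces to a uniform estimate of a scalar multiplier $m(\lambda_i)$ over the spectrum, i.e. to controlling $\sup_{\lambda\ge\lambda_1} m(\lambda)$ or $\sup_{\lambda>0}m(\lambda)$. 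All four inequalities then follow from two elementary scalar facts, which I isolate first.

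The two scalar facts are: (i) for every $\mu\ge0$, $\sup_{\lambda>0}\lambda^\mu e^{-\lambda t}=(\mu/e)^\mu t^{-\mu}$, obtained by differentiating $\lambda\mapsto\lambda^\mu e^{-\lambda t}$ and locating its maximum at $\lambda=\mu/t$; and (ii) for every $\beta\in[0,1]$, $C_\beta:=\sup_{s>0}s^{-\beta}(1-e^{-s})<\infty$, which holds because $s^{-\beta}(1-e^{-s})\sim s^{1-\beta}\to0$ as $s\to0^+$ and $\sim s^{-\beta}\to0$ (for $\beta>0$) as $s\to\infty$, the map being continuous and hence bounded on $(0,\infty)$; the boundary cases $\beta\in\{0,1\}$ are checked directly.

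Given these, I would argue as follows. For the first bound, $\nu\le0$ together with $\lambda_i\ge\lambda_1=\pi^2>1$ gives $\lambda_i^{\nu}\le\lambda_1^{\nu}\le1$, whence $\|A^\nu\|_{\mathcal{L}(V)}\le1$. For the third bound, fact (i) with $\mu\ge0$ yields $\sup_i\lambda_i^\mu e^{-\lambda_i t}\le(\mu/e)^\mu t^{-\mu}$, so $\|A^\mu S_t\|_{\mathcal{L}(V)}\le C t^{-\mu}$. For the second bound, writing $s=\lambda_i t$ and using fact (ii) with $\beta=\alpha\in[0,1]$ gives $\lambda_i^{-\alpha}(1-e^{-\lambda_i t})=t^\alpha\,(\lambda_i t)^{-\alpha}(1-e^{-\lambda_i t})\le C_\alpha t^\alpha$, which is exactly $\|A^{-\alpha}(S_t-{\rm Id}_V)\|_{\mathcal{L}(V)}\le Ct^\alpha$. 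Finally, for the fourth bound I would first compute $\int_s^tS_{t-\sigma}\,d\sigma=\int_0^{t-s}S_\tau\,d\tau=A^{-1}({\rm Id}_V-S_{t-s})$ via the diagonal integration $\int_0^{t-s}e^{-\lambda_i\tau}d\tau=\lambda_i^{-1}(1-e^{-\lambda_i(t-s)})$, so that $A^\alpha\int_s^tS_{t-\sigma}\,d\sigma=-A^{-(1-\alpha)}(S_{t-s}-{\rm Id}_V)$; applying the already-proved second bound with exponent $1-\alpha\in[0,1]$ then gives $\|A^\alpha\int_s^tS_{t-\sigma}\,d\sigma\|_{\mathcal{L}(V)}\le C|t-s|^{1-\alpha}$, and the stated estimate follows for every $x\in V$ by $\|A^\alpha\int_s^tS_{t-\sigma}x\,d\sigma\|_V\le\|A^\alpha\int_s^tS_{t-\sigma}\,d\sigma\|_{\mathcal{L}(V)}\|x\|_V$.

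There is no genuine analytic obstacle here: the whole statement is a package of standard analytic-semigroup estimates and the proof is purely functional-analytic, requiring none of the probabilistic machinery (Malliavin calculus, Skorohod integrals) used elsewhere. The only points demanding care are the uniformity of the scalar suprema in fact (ii) across the full closed range $\beta\in[0,1]$ --- in particular the endpoint $\beta=1$, where $s^{-1}(1-e^{-s})\to1$ rather than $0$ as $s\to0^+$ --- and the reduction of the fourth estimate to the second through the identity $\int_0^{r}S_\tau\,d\tau=A^{-1}({\rm Id}_V-S_r)$, which I would justify by the diagonal computation together with dominated convergence on the partial sums over $\{e_i\}$.
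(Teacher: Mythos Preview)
Your argument is correct: each of the four estimates reduces via the spectral representation to a scalar multiplier bound, and your two elementary facts (i) and (ii) handle all cases cleanly, with the fourth inequality following from the second through the identity $\int_0^{r}S_\tau\,d\tau=A^{-1}({\rm Id}_V-S_r)$. Note that the paper itself offers no proof of this lemma at all---it is simply quoted from \cite[Lemma B.9]{Kruse} as a standard fact about analytic semigroups---so there is nothing to compare your route against; your spectral proof is the natural one in this concrete setting where the eigensystem is explicit.
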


\begin{lemma}(\cite[Lemma 3.6]{wang17BIT})\label{lm-phi}
	There exists some constant $C=C(H)$ such that for any $0\le \rho\le H$, $0\le s<t\le T$ and $x\in V$, 
	\begin{align*}
	\int_{s}^{t}\int_{s}^{t}\langle A^\rho S_{t-u}x,A^\rho S_{t-v}x\rangle_V \phi(u,v)dudv\le C(t-s)^{2(H-\rho)}\|x\|_V^2.
	\end{align*}
\end{lemma}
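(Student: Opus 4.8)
The plan is to diagonalise the operators in the eigenbasis $\{e_i\}$ of $A$ and reduce the claim to a scalar, frequency-by-frequency estimate. Writing $x=\sum_i x_i e_i$ with $x_i=\langle x,e_i\rangle_V$, one has $A^\rho S_{t-u}x=\sum_i \lambda_i^\rho e^{-\lambda_i(t-u)}x_i e_i$, so the integrand decouples as $\langle A^\rho S_{t-u}x,A^\rho S_{t-v}x\rangle_V=\sum_i \lambda_i^{2\rho}x_i^2\,e^{-\lambda_i(t-u)}e^{-\lambda_i(t-v)}$. Since $\|x\|_V^2=\sum_i x_i^2$, it suffices to prove the uniform bound
\begin{align*}
\lambda_i^{2\rho}\int_s^t\int_s^t e^{-\lambda_i(t-u)}e^{-\lambda_i(t-v)}\phi(u,v)\,du\,dv\le C(H)\,(t-s)^{2(H-\rho)}
\end{align*}
for every index $i$, after which the claimed estimate follows by summing against $\sum_i x_i^2$.

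For the scalar estimate I would substitute $a=t-u$, $b=t-v$ and set $r:=t-s$, which, using $\phi(t-a,t-b)=\alpha_H|a-b|^{2H-2}$, turns the double integral into $\alpha_H\int_0^r\int_0^r e^{-\lambda_i a}e^{-\lambda_i b}|a-b|^{2H-2}\,da\,db$. The proof then splits according to whether $\lambda_i r\le 1$ or $\lambda_i r>1$. In the low-frequency regime $\lambda_i r\le 1$ I would bound the exponentials by $1$ and use the exactly computable integral $\int_0^r\int_0^r|a-b|^{2H-2}\,da\,db=r^{2H}/\alpha_H$; combined with $\lambda_i^{2\rho}\le r^{-2\rho}$ (valid since $\rho\ge 0$) this already yields the bound with constant $1$. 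In the high-frequency regime $\lambda_i r>1$ I would extend the domain from $[0,r]^2$ to $[0,\infty)^2$ and rescale $a=\xi/\lambda_i$, $b=\eta/\lambda_i$, which factors out $\lambda_i^{-2H}$ times the constant $c(H):=\int_0^\infty\int_0^\infty e^{-\xi}e^{-\eta}|\xi-\eta|^{2H-2}\,d\xi\,d\eta$; since $\rho\le H$ and $\lambda_i^{-1}<r$, the surplus factor $\lambda_i^{2\rho-2H}=\lambda_i^{-2(H-\rho)}$ is dominated by $r^{2(H-\rho)}$.

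The only real subtlety is the convergence of $c(H)$: near the diagonal $\xi=\eta$ the singularity $|\xi-\eta|^{2H-2}$ is integrable precisely because $2H-2>-1$, i.e. because $H>\frac12$, which is exactly the standing hypothesis, while the exponential weights control integrability at infinity. I expect this diagonal integrability, together with the correct gluing of the two regimes via the hypothesis $0\le\rho\le H$ (so that the exponent $2(H-\rho)$ is nonnegative and the matched bounds carry the right sign), to be the main point requiring care. Once the uniform frequency bound is in hand, summing against $\sum_i x_i^2=\|x\|_V^2$ gives the lemma with $C=C(H)=\max\{1,\alpha_H c(H)\}$.
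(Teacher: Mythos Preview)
The paper does not supply its own proof of this lemma: it is quoted verbatim as \cite[Lemma~3.6]{wang17BIT} and used as a black box. So there is no in-paper argument to compare against.

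Your argument is correct and is in fact the standard way to prove such an estimate. The diagonalisation reduces the problem to the scalar bound
\[
\lambda^{2\rho}\,\alpha_H\int_0^r\!\!\int_0^r e^{-\lambda a}e^{-\lambda b}|a-b|^{2H-2}\,da\,db\le C(H)\,r^{2(H-\rho)},
\]
and your low-/high-frequency dichotomy handles this cleanly: for $\lambda r\le1$ the exact value $\int_0^r\!\int_0^r|a-b|^{2H-2}\,da\,db=r^{2H}/\alpha_H$ gives the bound with constant~$1$, while for $\lambda r>1$ the rescaling yields the factor $\lambda^{-2(H-\rho)}\le r^{2(H-\rho)}$ times the constant $\alpha_H c(H)$. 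The only delicate point is indeed the finiteness of $c(H)$, and one can be explicit here: by symmetry and the substitution $w=\xi-\eta$ followed by Fubini,
\[
c(H)=2\int_0^\infty\!\!\int_0^\xi e^{-\xi-\eta}(\xi-\eta)^{2H-2}\,d\eta\,d\xi=\int_0^\infty w^{2H-2}e^{-w}\,dw=\Gamma(2H-1),
\]
which is finite precisely because $H>\tfrac12$. The interchange of the sum over $i$ with the double integral is justified by nonnegativity (Tonelli), so summing against $\sum_i x_i^2=\|x\|_V^2$ is legitimate. Nothing is missing.
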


\begin{lemma}\label{lm-ito}
	For $\Phi:[0,T]\rightarrow \mathcal{L}^0_2$, it holds that
	\begin{align*}
	\mathbb{E}\left[\left\|\int_{0}^{T}\Phi_tdW^{{\bf Q}}_t\right\|_V^2\right]=\sum_{i=1}^{\infty}\int_{0}^{T}\int_{0}^{T}\langle \Phi_u{\bf Q}^{\frac12}f_{i},\Phi_v{\bf Q}^{\frac12}f_{i}\rangle_V\phi(u,v)dudv,
	\end{align*}
  i.e.,
  	\begin{align*}
  \mathbb{E}\left[\left\|\int_{0}^{T}\Phi_tdW^{{\bf Q}}_t\right\|_V^2\right]=\int_{0}^{T}\int_{0}^{T}\langle \Phi_u,\Phi_v\rangle_{\mathcal{L}^0_2}\phi(u,v)dudv.
  \end{align*}
\end{lemma}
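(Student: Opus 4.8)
The plan is to reduce this infinite-dimensional Itô-type isometry to the scalar fBm Wiener isometry by expanding $W^{\bf Q}$ along the basis $\{f_i\}$ and exploiting the independence of the component fBms $\{\beta^i\}$. First I would record the base case: since $\Phi$ is deterministic, for each fixed $h\in V$ and each $i$ the integrand $t\mapsto\langle\Phi_t{\bf Q}^{\frac12}f_i,h\rangle_V$ is a deterministic real-valued function, so by the fractional-calculus definition of the integral \cite{Duncan09SIMA} (equivalently, by the $K=1$, $p=2$ case of Lemma \ref{Lp} with $D\varphi=0$) the Wiener integral $\int_0^T\langle\Phi_t{\bf Q}^{\frac12}f_i,h\rangle_V\,d\beta^i_t$ is a centered Gaussian variable whose second moment, and more generally whose covariance against the analogous integral with $h$ replaced by another vector, is given by
\begin{align*}
\mathbb{E}\Big[\Big(\textstyle\int_0^T\langle\Phi_t{\bf Q}^{\frac12}f_i,e_k\rangle_V\,d\beta^i_t\Big)\Big(\textstyle\int_0^T\langle\Phi_t{\bf Q}^{\frac12}f_i,e_\ell\rangle_V\,d\beta^i_t\Big)\Big]=\int_0^T\!\!\int_0^T\langle\Phi_u{\bf Q}^{\frac12}f_i,e_k\rangle_V\langle\Phi_v{\bf Q}^{\frac12}f_i,e_\ell\rangle_V\,\phi(u,v)\,du\,dv.
\end{align*}

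Next I would test the $V$-valued integral against the orthonormal basis $\{e_k\}$, writing $\big\langle\int_0^T\Phi_t\,dW^{\bf Q}_t,e_k\big\rangle_V=\sum_{i=1}^\infty\int_0^T\langle\Phi_t{\bf Q}^{\frac12}f_i,e_k\rangle_V\,d\beta^i_t$, so that by Parseval $\big\|\int_0^T\Phi_t\,dW^{\bf Q}_t\big\|_V^2=\sum_k\big\langle\int_0^T\Phi_t\,dW^{\bf Q}_t,e_k\big\rangle_V^2$. Taking expectation and using that $\beta^i$ and $\beta^j$ are independent and centered for $i\neq j$ makes all cross terms in the square vanish, collapsing the double sum over $(i,j)$ to a single sum over $i$; interchanging the remaining $\sum_k$ with the $du\,dv$ integration and applying Parseval once more in $V$ turns $\sum_k\langle\Phi_u{\bf Q}^{\frac12}f_i,e_k\rangle_V\langle\Phi_v{\bf Q}^{\frac12}f_i,e_k\rangle_V$ into $\langle\Phi_u{\bf Q}^{\frac12}f_i,\Phi_v{\bf Q}^{\frac12}f_i\rangle_V$. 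This gives the first displayed identity; the second then follows immediately from the definition of $\langle\cdot,\cdot\rangle_{\mathcal{L}^0_2}$ together with Fubini.

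The only genuinely delicate point is the bookkeeping that legitimizes these interchanges of summation, integration and expectation, and the passage to the limit in the series defining $W^{\bf Q}$. For this I would use the Cauchy--Schwarz bound $\sum_i\big|\langle\Phi_u{\bf Q}^{\frac12}f_i,\Phi_v{\bf Q}^{\frac12}f_i\rangle_V\big|\le\|\Phi_u\|_{\mathcal{L}^0_2}\|\Phi_v\|_{\mathcal{L}^0_2}$ together with $\phi\ge0$, so that every manipulation is justified (by Tonelli on absolute values, then dominated convergence) under the standing integrability $\int_0^T\!\int_0^T\|\Phi_u\|_{\mathcal{L}^0_2}\|\Phi_v\|_{\mathcal{L}^0_2}\,\phi(u,v)\,du\,dv<\infty$, which is exactly the condition making $\int_0^T\Phi_t\,dW^{\bf Q}_t$ a well-defined element of $L^2(\Omega;V)$; the $L^2(\Omega;V)$-convergence of the partial sums $\sum_{i\le K}{\bf Q}^{\frac12}f_i\beta^i_t$ handles the limit inside the integral. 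I expect this integrability/convergence step to be the main obstacle, while the algebraic heart of the proof is just two applications of Parseval plus the independence of the scalar fBms.
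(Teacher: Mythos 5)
Your proposal is correct and follows essentially the same route as the paper: expand the $V$-valued integral along the eigenbasis $\{e_j\}$ via Parseval, use independence of the scalar fBms $\beta^i$ to kill the cross terms, apply the $K=1$, $p=2$ deterministic case of Lemma \ref{Lp} to each scalar Wiener integral, and resum with Parseval to recover the $\mathcal{L}^0_2$ inner product. The only difference is that you spell out the integrability conditions and the Tonelli/dominated-convergence bookkeeping that the paper leaves implicit.
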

\begin{proof}
	Using the definiton of $W^{{\bf Q}}$ and the Parseval equality, we have
	\begin{align*}
	\mathbb{E}\left[\left\|\int_{0}^{T}\Phi_tdW^{{\bf Q}}_t\right\|_V^2\right]
	=\sum_{j=1}^{\infty}\sum_{i=1}^{\infty}\mathbb{E}\left[\left|\int_{0}^{T}\langle \Phi_t{\bf Q}^{\frac12}f_{i},e_j\rangle_Vd\beta^{i}_t\right|^2\right].
	\end{align*}
	Thanks to Lemma \ref{Lp}, we obtain
	\begin{align*}
	&\mathbb{E}\left[\left\|\int_{0}^{T}\Phi_tdW^{{\bf Q}}_t\right\|_V^2\right]\\
	=&\sum_{j=1}^{\infty}\sum_{i=1}^{\infty}\int_{0}^{T}\int_{0}^{T}\langle \Phi_u{\bf Q}^{\frac12}f_{i},e_j\rangle_V\langle \Phi_v{\bf Q}^{\frac12}f_{i},e_j\rangle_V\phi(u,v)dudv\\
	=&\sum_{i=1}^{\infty}\int_{0}^{T}\int_{0}^{T}\langle \Phi_u{\bf Q}^{\frac12}f_{i},\Phi_v{\bf Q}^{\frac12}f_{i}\rangle_V\phi(u,v)dudv\\
	=&\int_{0}^{T}\int_{0}^{T}\langle \Phi_u,\Phi_v\rangle_{\mathcal{L}^0_2}\phi(u,v)dudv.
	\end{align*}
\end{proof}

The following theorem provides the optimal regularity of the mild solution of \eqref{eq1}.

\begin{theorem}\label{tm-regu}
Let Assumptions \ref{x0}-\ref{Q} be satisfied with $\gamma\in(1,3-2H]$. Then \eqref{eq1} admits a unique mild solution such that 
\begin{align*}
\|X_t\|_{L^2(\Omega;\dot{V}^{2H+\gamma-1})}\le C\Big(1+\|u_0\|_{\dot{V}^{2H+\gamma-1}}\Big).
\end{align*}
Moreover, for any $\mu\in[0,2H+\gamma-1]$, it holds that
\begin{align*}
\|X_t-X_s\|_{L^2(\Omega;\dot{V}^{\mu})}\le C\Big(1+\|u_0\|_{\dot{V}^{2H+\gamma-1}}\Big)|t-s|^{\min\big\{\frac{2H+\gamma-1-\mu}{2},H\big\}}.
\end{align*}
\end{theorem}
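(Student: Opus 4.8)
The plan is to decompose the mild solution \eqref{mild} into three pieces $X_t = X^{(1)}_t + X^{(2)}_t + X^{(3)}_t$ with $X^{(1)}_t = S_t u_0$, $X^{(2)}_t = \int_0^t S_{t-s}F(X_s)ds$, and $X^{(3)}_t = \int_0^t S_{t-s}dW^{\bf Q}_s$, and to bound each piece in $\dot V^{2H+\gamma-1}$ and its increments in $\dot V^\mu$ separately, the whole argument being run inside a Banach fixed-point / a priori estimate scheme in the space $C([0,T];L^2(\Omega;V))$ (or, for the highest regularity bound, one first proves existence in $V$ and then bootstraps). For $X^{(1)}$ the estimate $\|S_t u_0\|_{\dot V^{2H+\gamma-1}} \le \|u_0\|_{\dot V^{2H+\gamma-1}}$ is immediate from $\|A^\nu\|_{\mathcal L(V)}\le C$ with $\nu\le 0$ in Lemma \ref{lm-A} (here $S_t$ is a contraction on every $\dot V^\theta$), and the increment bound $\|S_t u_0 - S_s u_0\|_{\dot V^\mu} = \|A^{-\alpha}(S_{t-s}-{\rm Id})A^{\alpha}S_s A^{\mu/2}u_0\|_V$ follows by writing $\alpha = \frac{2H+\gamma-1-\mu}{2}$ (truncated at $1$) and combining $\|A^{-\alpha}(S_{t-s}-{\rm Id})\|_{\mathcal L(V)}\le C(t-s)^\alpha$ with $\|A^{\alpha+\mu/2}S_s\|\cdot$-free boundedness since $\alpha + \mu/2 \le (2H+\gamma-1)/2$, so the factor $A^{(2H+\gamma-1)/2}u_0$ stays in $V$; when $\frac{2H+\gamma-1-\mu}{2}>1$ we only get exponent $1 \ge H$ anyway, consistent with the $\min$.

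For the deterministic convolution $X^{(2)}$, I would use the last estimate of Lemma \ref{lm-A}, $\|A^\alpha\int_s^t S_{t-\sigma}x\,d\sigma\|_V \le C|t-s|^{1-\alpha}\|x\|_V$, together with Assumption \ref{F}, which gives $\|F(X_s)\|_V \le \|f(0)\| + \sup|f'|\,\|X_s\|_V \le C(1+\|X_s\|_V)$. Applying this with $\alpha = (2H+\gamma-1)/2$ over $[0,t]$ bounds $\|X^{(2)}_t\|_{\dot V^{2H+\gamma-1}}$ by $C(1+\sup_s\|X_s\|_{L^2(\Omega;V)})$, which is finite once the fixed-point problem in $V$ is solved; the requirement $\alpha < 1$, i.e. $2H+\gamma-1 < 2$, i.e. $\gamma < 3-2H$, is exactly the upper restriction in the hypothesis (the endpoint $\gamma = 3-2H$ is handled by noting $\alpha = 1$ is still admissible in Lemma \ref{lm-A} and integrability of $|t-s|^0$). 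For the increment, split $X^{(2)}_t - X^{(2)}_s = \int_s^t S_{t-\sigma}F(X_\sigma)d\sigma + (S_{t-s}-{\rm Id})\int_0^s S_{s-\sigma}F(X_\sigma)d\sigma$ and apply the same two tools, choosing the regularity-exponent budget so the Hölder exponent is $\min\{(2H+\gamma-1-\mu)/2, 1\}$, which dominates $\min\{\cdots,H\}$.

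The stochastic convolution $X^{(3)}$ is where the fBm enters and is the technical core. For the norm bound, by Lemma \ref{lm-ito} applied to $\Phi_\sigma = A^{(2H+\gamma-1)/2}S_{t-\sigma}$,
\begin{align*}
\|X^{(3)}_t\|^2_{L^2(\Omega;\dot V^{2H+\gamma-1})} = \sum_{i=1}^\infty \int_0^t\int_0^t \big\langle A^{\frac{2H+\gamma-1}{2}}S_{t-u}{\bf Q}^{\frac12}f_i,\, A^{\frac{2H+\gamma-1}{2}}S_{t-v}{\bf Q}^{\frac12}f_i\big\rangle_V \phi(u,v)\,du\,dv,
\end{align*}
and I would split $A^{(2H+\gamma-1)/2} = A^{(\gamma-1)/2}A^{H}$, pull the fixed Hilbert--Schmidt factor $A^{(\gamma-1)/2}{\bf Q}^{1/2}f_i$ out via boundedness of $A^{(\gamma-1)/2}S_{t-u}$-type operators? — more precisely, write the integrand's summand as the value at $x = A^{(\gamma-1)/2}{\bf Q}^{1/2}f_i$ of the bilinear form in Lemma \ref{lm-phi} with $\rho = H\le H$, yielding $\le C\,t^{2(H-H)}\|A^{(\gamma-1)/2}{\bf Q}^{1/2}f_i\|_V^2 = C\|A^{(\gamma-1)/2}{\bf Q}^{1/2}f_i\|_V^2$; summing over $i$ gives $C\|A^{(\gamma-1)/2}\|^2_{\mathcal L^0_2} < \infty$ by Assumption \ref{Q}. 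For the increment $X^{(3)}_t - X^{(3)}_s$ in $\dot V^\mu$, decompose it as $\int_s^t S_{t-\sigma}dW^{\bf Q}_\sigma + \int_0^s(S_{t-s}-{\rm Id})S_{s-\sigma}dW^{\bf Q}_\sigma$; the first term is handled by Lemma \ref{lm-phi} on $[s,t]$ with $\rho = \mu/2 + (\text{extra regularity})$ chosen so that $\rho\le H$, giving exponent $2(H-\rho)$ and hence $|t-s|^{\min\{\cdots,H\}}$, and the leftover regularity absorbs an $A^{(\gamma-1)/2}$ onto ${\bf Q}^{1/2}$; the second term uses $\|A^{-\alpha}(S_{t-s}-{\rm Id})\|\le C|t-s|^\alpha$ to trade a factor $|t-s|^\alpha$ against $\alpha$ derivatives before applying Lemma \ref{lm-ito} and then Lemma \ref{lm-phi} with $\rho = 0$ on $[0,s]$. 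The main obstacle I anticipate is the bookkeeping of the regularity budget in the stochastic-convolution increment: one must simultaneously (i) keep the Hölder exponent at exactly $\min\{(2H+\gamma-1-\mu)/2, H\}$, (ii) keep the $\mathcal L^0_2$-factor at $A^{(\gamma-1)/2}$ so Assumption \ref{Q} applies, and (iii) respect the constraint $\rho\le H$ in Lemma \ref{lm-phi}, and verifying these three are mutually compatible for all $\mu\in[0,2H+\gamma-1]$ under $\gamma\in(1,3-2H]$ is the delicate point; existence and uniqueness of the mild solution in $C([0,T];L^2(\Omega;V))$ itself is routine via Banach's fixed point theorem using the Lipschitz bound on $F$ from Assumption \ref{F} and the already-established finiteness of the stochastic convolution.
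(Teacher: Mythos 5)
Your overall strategy is the same as the paper's: decompose the mild solution into the semigroup term, the deterministic convolution and the stochastic convolution, control the stochastic convolution via Lemma \ref{lm-ito} combined with Lemma \ref{lm-phi} (with $\rho=H$ for the spatial bound and $\rho=\frac{\mu+1-\gamma}{2}$ for the increments, absorbing $A^{\frac{\gamma-1}{2}}$ onto ${\bf Q}^{\frac12}$ so that Assumption \ref{Q} applies), and control the drift term via the smoothing estimates of Lemma \ref{lm-A}. The only structural difference is cosmetic: the paper writes the increment as $(S_{t-s}-{\rm Id}_V)X_s+\int_s^t S_{t-\sigma}F(X_\sigma)d\sigma+\int_s^t S_{t-\sigma}dW^{\bf Q}_\sigma$ using the flow property, so the ``history'' part of all three pieces is handled at once by the already-proved spatial regularity of $X_s$, whereas you split each piece separately; both work.

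There is, however, one genuine gap: your treatment of the endpoint $\gamma=3-2H$. You claim the bound on $\int_0^t S_{t-s}F(X_s)\,ds$ in $\dot V^{2}$ follows because ``$\alpha=1$ is still admissible in Lemma \ref{lm-A}.'' The estimate $\big\|A^\alpha\int_s^tS_{t-\sigma}x\,d\sigma\big\|_V\le C|t-s|^{1-\alpha}\|x\|_V$ indeed holds at $\alpha=1$, but only for a \emph{fixed} $x\in V$; it does not apply to the $\sigma$-dependent integrand $F(X_\sigma)$. The estimate that does apply to a $\sigma$-dependent integrand is $\|A S_{t-\sigma}\|_{\mathcal L(V)}\le C(t-\sigma)^{-1}$, and $(t-\sigma)^{-1}$ is \emph{not} integrable, so the argument you use for $\gamma<3-2H$ breaks down exactly at the endpoint. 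The paper closes this gap by a short bootstrap: first run the sub-endpoint argument to get a priori bounds $\|X_t\|_{L^2(\Omega;\dot V^1)}\le C(1+\|u_0\|_{\dot V^2})$ and $\|X_t-X_s\|_{L^2(\Omega;\dot V^1)}\le C|t-s|^{1/4}$, then split $F(X_\sigma)=F(X_t)+\big(F(X_\sigma)-F(X_t)\big)$; the first summand is constant in $\sigma$ so the fixed-$x$ estimate with $\alpha=1$ applies, and the second contributes $\int_0^t(t-\sigma)^{-1}\|F(X_t)-F(X_\sigma)\|_{L^2(\Omega;V)}\,d\sigma\le C\int_0^t(t-\sigma)^{-3/4}d\sigma<\infty$ thanks to the temporal H\"older continuity. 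You need this (or an equivalent regularization of the singular integral) to make the endpoint case rigorous; as written, your justification is incorrect.
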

\begin{proof}
	Under Assumptions \ref{x0}-\ref{Q}, the existence and uniqueness of the mild solution in $L^2(\Omega;V)$ follows from Lemma \ref{lm-ito} and the Gronwall's inequality. In the sequel, we concentrate on the proof for the regularity of the solution. 
	
	Suppose $0\le s<t\le T$. By means of the formulation \eqref{mild} and Assumption \ref{x0}, we get
	\begin{align*}
	&\|X_t\|_{L^2(\Omega;\dot{V}^{2H+\gamma-1})}\\
	\le &
	\|u_0\|_{\dot{V}^{2H+\gamma-1}}+\left\|\int_{0}^{t}S_{t-s}F(X_s)ds\right\|_{L^2(\Omega;\dot{V}^{2H+\gamma-1})}
		+\left\|\int_{0}^{t}S_{t-s}dW_s^{{\bf Q}}\right\|_{L^2(\Omega;\dot{V}^{2H+\gamma-1})}.
	\end{align*}
	Applying Lemmas \ref{lm-phi}-\ref{lm-ito} with $\rho=H$, we derive from Assumption \ref{Q} that
	\begin{align*}
	&\left\|\int_{0}^{t}S_{t-s}dW_s^{{\bf Q}}\right\|^2_{L^2(\Omega;\dot{V}^{2H+\gamma-1})}\\
	=&\int_{0}^{t}\int_{0}^{t}\langle A^HS_{t-u}A^{\frac{\gamma-1}{2}},A^HS_{t-v}A^{\frac{\gamma-1}{2}}\rangle_{\mathcal{L}^0_2}\phi(u,v)dudv\\
	=&\sum_{i=1}^{\infty}\int_{0}^{t}\int_{0}^{t}\langle A^HS_{t-u}A^{\frac{\gamma-1}{2}}{\bf Q}^\frac12 f_i,A^HS_{t-v}A^{\frac{\gamma-1}{2}}{\bf Q}^\frac12 f_i\rangle_V\phi(u,v)dudv\\
	\le&C \left\| A^{\frac{\gamma-1}{2}} \right\|^2_{\mathcal{L}^0_2}.
	\end{align*}
	If $\gamma\in(1,3-2H)$,
	Lemma \ref{lm-A} and Assumption \ref{F} lead to
	\begin{align}
	\left\|\int_{0}^{t}S_{t-s}F(X_s)ds\right\|_{L^2(\Omega;\dot{V}^{2H+\gamma-1})}\label{eq-SF}
	\le &\int_{0}^{t}\left\|A^{\frac{2H+\gamma-1}{2}}S_{t-s}F(X_s)\right\|_{L^2(\Omega;V)}ds\\
	\le &C\int_{0}^{t}|t-s|^{-\frac{2H+\gamma-1}{2}}\left\|F(X_s)\right\|_{L^2(\Omega;V)}ds\nonumber\\   
	\le &C\left(\int_{0}^{t}|t-s|^{-\frac{2H+\gamma-1}{2}}ds\right)\Big(1+\|u_0\|_{\dot{V}^{2H+\gamma-1}}\Big)\nonumber\\
	\le&C\Big(1+\|u_0\|_{\dot{V}^{2H+\gamma-1}}\Big).\nonumber
	\end{align}
	Then
	\begin{align*}
	\|X_t\|_{L^2(\Omega;\dot{V}^{2H+\gamma-1})}\le C\Big(1+\|u_0\|_{\dot{V}^{2H+\gamma-1}}\Big).
	\end{align*}
	
	Considering the temporal regularity, we have
	\begin{align*}
	&\|X_t-X_s\|_{L^2(\Omega;\dot{V}^{\mu})}\\
	\le &
	\|(S_{t-s}-{\rm Id}_V)X_s\|_{L^2(\Omega;\dot{V}^{\mu})}
	+\left\|\int_{s}^{t}S_{t-\sigma}F(X_\sigma)d\sigma\right\|_{L^2(\Omega;\dot{V}^{\mu})}
	+\left\|\int_{s}^{t}S_{t-\sigma}dW_\sigma^{{\bf Q}}\right\|_{L^2(\Omega;\dot{V}^{\mu})}.
	\end{align*}
	Applying Lemma \ref{lm-phi} with $\rho=\frac{\mu+1-\gamma}{2}\in[0,H]$, i.e.,  $\mu\in[\gamma-1,2H+\gamma-1]$, we obtain
    	\begin{align*}
    &\left\|\int_{s}^{t}S_{t-\sigma}dW_\sigma^{{\bf Q}}\right\|^2_{L^2(\Omega;\dot{V}^{\mu})}\\
    =&\sum_{i=1}^{\infty}\int_{s}^{t}\int_{s}^{t}\langle A^{\frac{\mu+1-\gamma}{2}}S_{t-u}A^{\frac{\gamma-1}{2}}{\bf Q}^\frac12 f_i,A^{\frac{\mu+1-\gamma}{2}}S_{t-v}A^{\frac{\gamma-1}{2}}{\bf Q}^\frac12 f_i\rangle_V\phi(u,v)dudv\\
    \le&C |t-s|^{\min\big\{2H+\gamma-1-\mu,2H\big\}}\left\| A^{\frac{\gamma-1}{2}} \right\|^2_{\mathcal{L}^0_2}.
    \end{align*}
    Lemma \ref{lm-A} implies that for any $\mu\in[0,2)$, 
    	\begin{align*}
    \left\|\int_{s}^{t}S_{t-\sigma}F(X_\sigma)d\sigma\right\|_{L^2(\Omega;\dot{V}^{\mu})}
    \le & \int_{s}^{t}\left\|A^{\frac{\mu}{2}}S_{t-\sigma}F(X_\sigma)\right\|_{L^2(\Omega;V)}d\sigma\\
    \le & C|t-s|^{\frac{2-\mu}{2}}\Big(1+\|u_0\|_{\dot{V}^{2H+\gamma-1}}\Big).
    \end{align*}
	Combining with 
	\begin{align*}
	&\|(S_{t-s}-{\rm Id}_V)X_s\|_{L^2(\Omega;\dot{V}^{\mu})}\\
	=&	\left\|A^{\frac{\mu}{2}}A^{-\frac{2H+\gamma-1}{2}}(S_{t-s}-{\rm Id}_V)A^{\frac{2H+\gamma-1}{2}}X_s\right\|_{L^2(\Omega;V)}\\
	\le & C|t-s|^{\frac{2H+\gamma-1-\mu}{2}}\|X_s\|_{L^2(\Omega;\dot{V}^{2H+\gamma-1})},
	\end{align*}
	we have
	\begin{align*}
	\|X_t-X_s\|_{L^2(\Omega;\dot{V}^{\mu})}\le C\Big(1+\|u_0\|_{\dot{V}^{2H+\gamma-1}}\Big)|t-s|^{\min\big\{\frac{2H+\gamma-1-\mu}{2},H\big\}},
	\end{align*}
	for $\gamma\in(1,3-2H)$ and $\mu\in[0,2H+\gamma-1]$.
	
	When $\gamma=3-2H$, it suffices to revise the estimate in \eqref{eq-SF}. The previous arguments yield a priori estimates that 
	\begin{align*}
&	\|X_t\|_{L^2(\Omega;\dot{V}^{1})}\le C\Big(1+\|u_0\|_{\dot{V}^{2}}\Big),\\
&	\|X_t-X_s\|_{L^2(\Omega;\dot{V}^{1})}\le C\Big(1+\|u_0\|_{\dot{V}^{2}}\Big)|t-s|^{\frac14}.
	\end{align*}
	Taking advantage of Lemma \ref{lm-A} and a priori estimates above, we deduce
	\begin{align*}
	&\left\|\int_{0}^{t}S_{t-s}F(X_s)ds\right\|_{L^2(\Omega;\dot{V}^{2})}\\
	\le &\left\|A\int_{0}^{t}S_{t-s}F(X_t)ds\right\|_{L^2(\Omega;V)}+\left\|A\int_{0}^{t}S_{t-s}(F(X_t)-F(X_s))ds\right\|_{L^2(\Omega;V)}\\
			\le & C \|F(X_t)\|_{L^2(\Omega;V)}+C \int_{0}^{t}(t-s)^{-1}\left\|F(X_t)-F(X_s)\right\|_{L^2(\Omega;V)} ds \\
					\le & C \Big(1+\|u_0\|_{\dot{V}^{2}}\Big) \left(1+ \int_{0}^{t}(t-s)^{-\frac34}  ds \right) \\ \le &C\Big(1+\|u_0\|_{\dot{V}^{2}}\Big),
	\end{align*}
	from which we complete the proof.
\end{proof}

\section{Spatial semi-discretizaiton}\label{sec4}
In this section, we study the spatial semi-discretization for \eqref{eq1}. First, based on the $K$-dimensional subspace $U_K:={\rm span}\{f_i:i=1,\cdots,K\}\subset U$, we truncate the infinite dimensional fBm to obtain the SHE driven by the $K$-dimensional fBm. We give the estimates for Malliavin derivatives of its mild solution and the strong convergence order associated with the truncation.
Next, we apply the spectral Galerkin method to spatially discretize the SHE driven by the $K$-dimensional fBm and show the optimal strong convergence rate of the spectral Galerkin method, which coincides with the optimal spatial regularity of the solution of \eqref{eq1}. Furthermore, in preparation for proving that the exponential integrator is super-convergent in time in the next section, we derive the temporal regularity of the mild solution in $L^4(\Omega;V)$ as a priori estimate.

Let
\begin{align*}
W^{{\bf Q},K}_t:=\sum_{i=1}^{K}{\bf Q}^{\frac12} f_i \beta^i_t
\end{align*}
be the truncation of the infinite dimensional fBm. 
Consider the mild solution $X^{K}_t$ of the SHE driven by the $K$-dimensional fBm
\begin{align}\label{eq2}
\left\{
\begin{aligned}
dX^{K}_t&=-A X^{K}_t dt+ F(X^{K}_t)dt+dW^{{\bf Q},K}_t,\quad t\in(0,T],\\
X^{K}_0&=u_0.
\end{aligned}
\right.
\end{align}
For any $x\in(0,1)$, $t\in(0,T]$ and $p\ge 1$, one knows from \cite[Proposition 3.3 and Lemma 3.4]{DT13} and \cite[Proposition 7]{NS09SPA} that the random variable $X^{K}_t(x)\in\mathbb{D}^{2,p}$. The estimates for the Malliavin derivatives are given as follows. 

\begin{lemma}\label{lm-esti-M}
	Let Assumptions \ref{x0}-\ref{F} be satisfied with $\gamma>1$.
	Denote $\varPsi^{i_1}_{t,r_1}(x):=\big(D_{r_1}X^{K}_t(x)\big)^{i_1}$ and 
	$\varPsi^{i_1,i_2}_{t,r_1,r_2}(x):=\big(D_{r_2}\big(D_{r_1}X^{K}_t(x)\big)^{i_1}\big)^{i_2}$, for $0\le r_1,r_2\le t$ and $1\le i_1,i_2\le K$. Then for any $0\le r<2$, there exists some constant $C=C(T,F,r)$ such that
	\begin{align*}
		\big\|\varPsi^{i_1}_{t,r_1}\big\|_{\dot{V}^{r}}&\le C\big\|{\bf Q}^{\frac12} f_{i_1}\big\|_{\dot{V}^{r}},\\
		\big\|\varPsi^{i_1,i_2}_{t,r_1,r_2}\big\|_V&\le  C\big\|{\bf Q}^{\frac12} f_{i_1}\big\|_V\big\|{\bf Q}^{\frac12} f_{i_2}\big\|_V.
	\end{align*}
\end{lemma}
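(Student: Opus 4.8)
The plan is to differentiate the mild formulation of \eqref{eq2} in the Malliavin sense, to derive closed integral equations for $\varPsi^{i_1}_{t,r_1}$ and $\varPsi^{i_1,i_2}_{t,r_1,r_2}$, and then to bound them by combining the smoothing effect of the semigroup (Lemma \ref{lm-A}) with a singular Gronwall argument.

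\emph{First derivative.} Starting from
\begin{align*}
X^K_t=S_tu_0+\int_0^tS_{t-s}F(X^K_s)\,ds+\sum_{i=1}^K\int_0^tS_{t-s}{\bf Q}^{\frac12}f_i\,d\beta^i_s,
\end{align*}
I would apply $D_{r_1}$ with respect to $\beta^{i_1}$. The term $S_tu_0$ is deterministic; the chain rule gives $D_{r_1}^{i_1}F(X^K_s)=f'(X^K_s)\varPsi^{i_1}_{s,r_1}$ (pointwise in $x\in(0,1)$, since $F$ is a Nemytskii operator); and, as $\int_0^tS_{t-s}{\bf Q}^{\frac12}f_i\,d\beta^i_s$ is a Wiener-type integral with deterministic integrand, its Malliavin derivative with respect to $\beta^{i_1}$ equals $\mathds{1}_{[0,t]}(r_1)\,\mathds{1}_{\{i_1\}}(i)\,S_{t-r_1}{\bf Q}^{\frac12}f_i$. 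Hence, for $r_1\le t$,
\begin{align*}
\varPsi^{i_1}_{t,r_1}=S_{t-r_1}{\bf Q}^{\frac12}f_{i_1}+\int_{r_1}^tS_{t-s}\,f'(X^K_s)\,\varPsi^{i_1}_{s,r_1}\,ds,
\end{align*}
and $\varPsi^{i_1}_{t,r_1}=0$ for $r_1>t$. Applying $A^{\frac r2}$, using $\|A^{\frac r2}S_{t-r_1}{\bf Q}^{\frac12}f_{i_1}\|_V=\|S_{t-r_1}A^{\frac r2}{\bf Q}^{\frac12}f_{i_1}\|_V\le\|{\bf Q}^{\frac12}f_{i_1}\|_{\dot{V}^r}$, the bound $\|A^{\frac r2}S_{t-s}\|_{\mathcal{L}(V)}\le C(t-s)^{-\frac r2}$ from Lemma \ref{lm-A}, and $\sup_x|f'(x)|<\infty$ from Assumption \ref{F}, I arrive at
\begin{align*}
\big\|\varPsi^{i_1}_{t,r_1}\big\|_{\dot{V}^r}\le\big\|{\bf Q}^{\frac12}f_{i_1}\big\|_{\dot{V}^r}+C\int_{r_1}^t(t-s)^{-\frac r2}\big\|\varPsi^{i_1}_{s,r_1}\big\|_{\dot{V}^r}\,ds.
\end{align*}
Since $r<2$ the kernel $(t-s)^{-r/2}$ is integrable, so a singular Gronwall (Gronwall--Henry) inequality gives the first estimate.

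\emph{Second derivative.} I would apply $D_{r_2}$ with respect to $\beta^{i_2}$ to the integral equation just derived. The deterministic term drops out, and the product and chain rules give, for $r_1\vee r_2\le t$,
\begin{align*}
\varPsi^{i_1,i_2}_{t,r_1,r_2}=\int_{r_1\vee r_2}^tS_{t-s}\Big(f''(X^K_s)\,\varPsi^{i_1}_{s,r_1}\varPsi^{i_2}_{s,r_2}+f'(X^K_s)\,\varPsi^{i_1,i_2}_{s,r_1,r_2}\Big)\,ds,
\end{align*}
with all products taken pointwise in $x$, and $\varPsi^{i_1,i_2}_{t,r_1,r_2}=0$ otherwise. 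For the $f'$-term I again use $\sup_x|f'(x)|<\infty$ together with the contractivity of $S_{t-s}$ to bound it by $C\|\varPsi^{i_1,i_2}_{s,r_1,r_2}\|_V$. The $f''$-term is the crux: with only the $V$-bounds from the first part (taking $r=0$), the product $\varPsi^{i_1}_{s,r_1}\varPsi^{i_2}_{s,r_2}$ lies only in $L^1(0,1)$, with
\begin{align*}
\big\|\varPsi^{i_1}_{s,r_1}\varPsi^{i_2}_{s,r_2}\big\|_{L^1(0,1)}\le\big\|\varPsi^{i_1}_{s,r_1}\big\|_V\big\|\varPsi^{i_2}_{s,r_2}\big\|_V\le C\big\|{\bf Q}^{\frac12}f_{i_1}\big\|_V\big\|{\bf Q}^{\frac12}f_{i_2}\big\|_V.
\end{align*}
I would then use the embedding $L^1(0,1)\hookrightarrow\dot{V}^{-\sigma}$, valid for a fixed $\sigma\in(\tfrac12,1)$ by duality with $\dot{V}^\sigma\hookrightarrow L^\infty(0,1)$, together with $\|S_{t-s}w\|_V=\|A^{\frac\sigma2}S_{t-s}(A^{-\frac\sigma2}w)\|_V\le C(t-s)^{-\frac\sigma2}\|w\|_{\dot{V}^{-\sigma}}$ (Lemma \ref{lm-A}), so that the $f''$-contribution is bounded by $C(t-s)^{-\sigma/2}\|{\bf Q}^{\frac12}f_{i_1}\|_V\|{\bf Q}^{\frac12}f_{i_2}\|_V$ with integrable singularity. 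Collecting the two contributions yields
\begin{align*}
\big\|\varPsi^{i_1,i_2}_{t,r_1,r_2}\big\|_V\le C\big\|{\bf Q}^{\frac12}f_{i_1}\big\|_V\big\|{\bf Q}^{\frac12}f_{i_2}\big\|_V+C\int_{r_1\vee r_2}^t\big\|\varPsi^{i_1,i_2}_{s,r_1,r_2}\big\|_V\,ds,
\end{align*}
and the classical Gronwall inequality completes the proof.

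I expect the main obstacle to be the $f''$-term above. A direct $L^2$-estimate of the product $\varPsi^{i_1}_{s,r_1}\varPsi^{i_2}_{s,r_2}$ is hopeless, since a product of two $L^2$-functions need not be in $L^2$; and controlling it through the Sobolev embedding $\dot{V}^r\hookrightarrow L^\infty$ for $r>\tfrac12$ would force a $\dot{V}^r$-norm on one factor, which would spoil the $V\times V$ form of the right-hand side. The way out is to trade the (easy) $L^1$-bound for the product against the $L^1\to L^2$, equivalently $\dot{V}^{-\sigma}\to V$, smoothing of the heat semigroup. A more routine matter, covered by the $\mathbb{D}^{2,p}$-membership of $X^K_t(x)$ recalled before the lemma, is justifying the term-by-term Malliavin differentiation of the mild-form equations and the measurability and local integrability of $s\mapsto\|\varPsi^{i_1}_{s,r_1}\|_{\dot{V}^r}$ and $s\mapsto\|\varPsi^{i_1,i_2}_{s,r_1,r_2}\|_V$ needed to run the Gronwall arguments.
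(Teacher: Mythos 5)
Your proposal is correct and follows essentially the same route as the paper: the same linear integral equations for the first and second Malliavin derivatives, the same $A^{r/2}$--smoothing plus singular Gronwall argument for the first bound, and the same resolution of the $f''$-term via the $L^1$ (Cauchy--Schwarz) bound on the product, the duality embedding $L^1(0,1)\hookrightarrow\dot{V}^{-\sigma}$ with $\sigma\in(\frac12,1)$ from $\dot{V}^{\sigma}\hookrightarrow L^\infty$, and the $\dot{V}^{-\sigma}\to V$ smoothing of the semigroup. The only cosmetic difference is that the paper quotes the references for the derivative equations rather than deriving them by differentiating the mild form, which you also note is justified by the $\mathbb{D}^{2,p}$-membership recalled before the lemma.
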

\begin{proof}
	Based on \cite[Proposition 3.3 and Lemma 3.4]{DT13} and \cite[Proposition 7]{NS09SPA}, we have that $\varPsi^{i_1}_{t,r_1},\varPsi^{i_1,i_2}_{t,r_1,r_2}$ satisfy the linear equations
	\begin{align*}
		\varPsi^{i_1}_{t,r_1}&=S_{t-r_1}{\bf Q}^{\frac12} f_{i_1}+\int_{r_1}^{t}S_{t-s}\big(F'(X^{K}_s)\varPsi^{i_1}_{s,r_1} \big)ds,\\
		\varPsi^{i_1,i_2}_{t,r_1,r_2}&=\int_{r_1\vee r_2}^{t}S_{t-s}\big(F''(X^{K}_s)\varPsi^{i_1}_{s,r_1} \varPsi^{i_2}_{s,r_2} \big) ds+\int_{r_1\vee r_2}^{t}S_{t-s}\big(F'(X^{K}_s)\varPsi^{i_1,i_2}_{s,r_1,r_2} \big)ds,
	\end{align*}
	respectively. 
	The uniform boundedness of $\{S_{t}\}_{t\ge 0}$, Lemma \ref{lm-A} and Assumption \ref{F} lead to
	\begin{align*}
		\big\|\varPsi^{i_1}_{t,r_1}\big\|_{\dot{V}^{r}}\le& \big\|{\bf Q}^{\frac12} f_{i_1}\big\|_{\dot{V}^{r}}+C\int_{r_1}^{t}\big\|A^{\frac{r}{2}}S_{t-s}\big(F'(X^{K}_s)\varPsi^{i_1}_{s,r_1}\big) \big\|_Vds\\
		\le& \big\|{\bf Q}^{\frac12} f_{i_1}\big\|_{\dot{V}^{r}}+C\int_{r_1}^{t}|t-s|^{-\frac{r}{2}}\big\|F'(X^{K}_s)\varPsi^{i_1}_{s,r_1} \big\|_{V}ds\\
	\le& \big\|{\bf Q}^{\frac12} f_{i_1}\big\|_{\dot{V}^{r}}+C\int_{r_1}^{t}|t-s|^{-\frac{r}{2}}\big\|\varPsi^{i_1}_{s,r_1} \big\|_{V}ds.
	\end{align*}
	Then the Gronwall's inequality yields
		\begin{align*}
	\big\|\varPsi^{i_1}_{t,r_1}\big\|_{\dot{V}^{r}}&\le C\big\|{\bf Q}^{\frac12} f_{i_1}\big\|_{\dot{V}^{r}}.
	\end{align*}

For the second derivative, utilizing Lemma \ref{lm-A} and the Sobolev embedding $\dot{V}^{\alpha}\hookrightarrow L^\infty([0,1])$, $\frac12<\alpha<1$, we have
\begin{align*}
\big\|S_{t-s}\big(F''(X^{K}_s)\varPsi^{i_1}_{s,r_1} \varPsi^{i_2}_{s,r_2}\big)\big\|_V=&
\big\|A^{\frac{\alpha}{2}}S_{t-s}A^{-\frac{\alpha}{2}}\big(F''(X^{K}_s)\varPsi^{i_1}_{s,r_1} \varPsi^{i_2}_{s,r_2}\big)\big\|_V\\
\le&
|t-s|^{-\frac{\alpha}{2}}\big\|F''(X^{K}_s)\varPsi^{i_1}_{s,r_1} \varPsi^{i_2}_{s,r_2}\big\|_{\dot{V}^{-\alpha}}\\
\le& |t-s|^{-\frac{\alpha}{2}}\int_{0}^{1} \big|f''(X^{K}_s(x))\varPsi^{i_1}_{s,r_1} (x)\varPsi^{i_2}_{s,r_2}(x)\big| dx\\
\le& |t-s|^{-\frac{\alpha}{2}}\big\|\varPsi^{i_1}_{s,r_1}\big\|_V\big\|\varPsi^{i_2}_{s,r_2} \big\|_V.
\end{align*}
Therefore, we obtain
	\begin{align*}
		&\big\|\varPsi^{i_1,i_2}_{t,r_1,r_2}\big\|_V\\
		\le &C\int_{r_1\vee r_2}^{t}|t-s|^{-\frac{\alpha}{2}}\big\|\varPsi^{i_1}_{s,r_1}\big\|_V\big\|\varPsi^{i_2}_{s,r_2} \big\|_Vds+C\int_{r_1\vee r_2}^{t}\big\|\varPsi^{i_1,i_2}_{s,r_1,r_2}\big\|_Vds\\
	\le&  C\int_{r_1\vee r_2}^{t}|t-s|^{-\frac{\alpha}{2}}\big\|{\bf Q}^{\frac12} f_{i_1}\big\|_V\big\|{\bf Q}^{\frac12} f_{i_2}\big\|_Vds+C\int_{r_1\vee r_2}^{t}\big\|\varPsi^{i_1,i_2}_{s,r_1,r_2}\big\|_Vds\\
		\le & C\big\|{\bf Q}^{\frac12} f_{i_1}\big\|_V\big\|{\bf Q}^{\frac12} f_{i_2}\big\|_V+C\int_{r_1\vee r_2}^{t}\big\|\varPsi^{i_1,i_2}_{s,r_1,r_2}\big\|_Vds\\
	\le &C\big\|{\bf Q}^{\frac12} f_{i_1}\big\|_V\big\|{\bf Q}^{\frac12} f_{i_2}\big\|_V ,
	\end{align*}
	where the Gronwall's inequality is used in the last inequality.
\end{proof}

Denote $\mathcal{P}_K$ as the project operator from $U$ onto $U_K$. We give the strong error analysis for the SHE driven by the $K$-dimensional fBm.

\begin{theorem}\label{tm-3}
	Let $X$ and $X^K$ be mild solutions of \eqref{eq1} and \eqref{eq2}, respectively. Under Assumptions \ref{x0}-\ref{Q} with $\gamma>1$, it holds that
	\begin{align*}
	\sup_{t\in[0,T]}\left\| X_{t}-X^K_{t}  \right\|_{L^2(\Omega;V)}\le C\big\|{\bf Q}^\frac12 ({\rm Id}_U-\mathcal{P}_K)\big\|_{\mathcal{L}_2(U,\dot{V}^{-2H})}.
	\end{align*}
\end{theorem}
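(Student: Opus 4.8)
The plan is to subtract the two mild representations and close the resulting estimate with Gronwall's inequality. Set $e_t:=X_t-X^K_t$. Since \eqref{eq2} differs from \eqref{eq1} only by replacing $W^{\bf Q}$ with $W^{{\bf Q},K}$, with the same initial datum, \eqref{mild} and the mild formulation of \eqref{eq2} give
\begin{align*}
e_t=\int_0^t S_{t-s}\big(F(X_s)-F(X^K_s)\big)\,ds+\int_0^t S_{t-\sigma}\,d\big(W^{\bf Q}_\sigma-W^{{\bf Q},K}_\sigma\big),
\end{align*}
where $W^{\bf Q}_\sigma-W^{{\bf Q},K}_\sigma=\sum_{i=K+1}^\infty {\bf Q}^{\frac12}f_i\beta^i_\sigma$ is the discarded tail of the noise (well-posedness of \eqref{eq2} itself follows exactly as in Theorem \ref{tm-regu}). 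I would take $L^2(\Omega;V)$-norms and estimate the two terms separately. For the drift term, Assumption \ref{F} makes $F$ globally Lipschitz on $V$ with constant $\sup_x|f'(x)|$, since $\|F(g)-F(\tilde g)\|_V^2=\int_0^1|f(g(x))-f(\tilde g(x))|^2\,dx$; combined with $\|S_{t-s}\|_{\mathcal L(V)}\le C$ from Lemma \ref{lm-A}, this yields
\begin{align*}
\Big\|\int_0^t S_{t-s}\big(F(X_s)-F(X^K_s)\big)\,ds\Big\|_{L^2(\Omega;V)}\le C\int_0^t\|e_s\|_{L^2(\Omega;V)}\,ds,
\end{align*}
with no a priori bound on $X^K$ needed.

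For the stochastic term, I would invoke the isometry underlying Lemma \ref{lm-ito}, applied verbatim to the tail series (restricting the sum over the orthonormal basis of $U$ to indices $i>K$), to obtain
\begin{align*}
\mathbb E\Big[\Big\|\int_0^t S_{t-\sigma}\,d\big(W^{\bf Q}_\sigma-W^{{\bf Q},K}_\sigma\big)\Big\|_V^2\Big]=\sum_{i=K+1}^\infty\int_0^t\!\!\int_0^t\langle S_{t-u}{\bf Q}^{\frac12}f_i,S_{t-v}{\bf Q}^{\frac12}f_i\rangle_V\phi(u,v)\,du\,dv.
\end{align*}
The crucial step is then to apply Lemma \ref{lm-phi} at the endpoint exponent $\rho=H$: writing $S_{t-u}{\bf Q}^{\frac12}f_i=A^H S_{t-u}x_i$ with $x_i:=A^{-H}{\bf Q}^{\frac12}f_i$, which lies in $V$ because $A^{-H}\in\mathcal L(V)$ and satisfies $\|x_i\|_V=\|{\bf Q}^{\frac12}f_i\|_{\dot V^{-2H}}$, Lemma \ref{lm-phi} with $s=0$ bounds each summand by $C\,t^{2(H-H)}\|x_i\|_V^2=C\|{\bf Q}^{\frac12}f_i\|_{\dot V^{-2H}}^2$, a constant uniform in $t\in[0,T]$. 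Summing over $i>K$ and using that $\{f_i\}_{i\le K}$ spans the kernel of ${\rm Id}_U-\mathcal P_K$ while $\{f_i\}_{i>K}$ is an orthonormal basis of its range gives $\sum_{i>K}\|{\bf Q}^{\frac12}f_i\|_{\dot V^{-2H}}^2=\|{\bf Q}^{\frac12}({\rm Id}_U-\mathcal P_K)\|_{\mathcal L_2(U,\dot V^{-2H})}^2$. Combining the two estimates yields
\begin{align*}
\|e_t\|_{L^2(\Omega;V)}\le C\int_0^t\|e_s\|_{L^2(\Omega;V)}\,ds+C\big\|{\bf Q}^{\frac12}({\rm Id}_U-\mathcal P_K)\big\|_{\mathcal L_2(U,\dot V^{-2H})},
\end{align*}
and Gronwall's inequality finishes the proof.

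The argument is essentially routine once the correct spaces are identified; the only genuinely delicate point is recognizing that the noise truncation error must be measured in the doubly smoothed space $\dot V^{-2H}$. The two negative powers of $A$ are precisely what is produced by applying Lemma \ref{lm-phi} at $\rho=H$, which is also what makes the bound independent of $t$, since the time factor $(t-s)^{2(H-\rho)}$ degenerates to a constant exactly at that endpoint.
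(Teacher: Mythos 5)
Your proof is correct and follows essentially the same route as the paper: the same decomposition into drift and noise-tail terms, the same application of the isometry of Lemma \ref{lm-ito} restricted to indices $i>K$, the same insertion of $A^{H}\cdot A^{-H}$ so that Lemma \ref{lm-phi} at $\rho=H$ yields a $t$-uniform bound by $\|{\bf Q}^{\frac12}f_i\|_{\dot V^{-2H}}^2$, and the same Gronwall closure. No gaps.
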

\begin{proof}
	The error has the following decomposition
	\begin{align*}
		&\left\| X_{t}-X^K_{t}  \right\|_{L^2(\Omega;V)}\\
		\le &\left\| \int_{0}^{t} S_{t-s}\big(F(X_s)-F(X^K_s)\big)ds \right\|_{L^2(\Omega;V)}+\left\| \int_{0}^{t} S_{t-s}dW^{{\bf Q}}_s-\int_{0}^{t} S_{t-s}dW^{{\bf Q},K}_s\right\|_{L^2(\Omega;V)}.
	\end{align*}
	Using Lemmas \ref{lm-phi}-\ref{lm-ito} with $\rho=H$, we get 
	\begin{align*}
	&\mathbb{E}\left[\left\|  \int_{0}^{t} S_{t-s}dW^{{\bf Q}}_s-\int_{0}^{t} S_{t-s}dW^{{\bf Q},K}_s  \right\|^2_V\right]\\
	=&\sum_{i=K+1}^{\infty}\int_{0}^{t}\int_{0}^{t}\langle S_{t-u}{\bf Q}^\frac12 f_i,S_{t-v}{\bf Q}^\frac12 f_i\rangle_V\phi(u,v)dudv\\
	=&\sum_{i=K+1}^{\infty}\int_{0}^{t}\int_{0}^{t}\langle A^HS_{t-u}A^{-H}{\bf Q}^\frac12 f_i,A^{H}S_{t-v}A^{-H}{\bf Q}^\frac12 f_i\rangle_V\phi(u,v)dudv\\
	\le& C \sum_{i=K+1}^{\infty}\big\|A^{-H}{\bf Q}^\frac12 f_i\big\|_V^2\\
	=&C \big\|{\bf Q}^\frac12 ({\rm Id}_U-\mathcal{P}_K)\big\|_{\mathcal{L}_2(U,\dot{V}^{-2H})}^2.
	\end{align*}
	Hence, 
	\begin{align*}
			\left\| X_{t}-X^K_{t}  \right\|_{L^2(\Omega;V)}
			\le C\int_{0}^{t} \left\|X_s-X^K_s \right\|_{L^2(\Omega;V)}ds
	+C \big\|{\bf Q}^\frac12 ({\rm Id}_U-\mathcal{P}_K)\big\|_{\mathcal{L}_2(U,\dot{V}^{-2H})},
	\end{align*}
	from which we conclude the result by the Gronwall's inequality.
\end{proof}

In the next step, we spatially discretize \eqref{eq2} by the spectral Galerkin method. More precisely, denoting by  $P_M$ the project operator from $V$ onto $V_M:={\rm span}\{e_i:i=1,\cdots,M\}\subset V$ and $A_M:=AP_M$, we obtain
\begin{align}\label{eq3}
\left\{
\begin{aligned}
dX^{K,M}_t&=-A_M X^{K,M}_t dt+ P_MF(X^{K,M}_t)dt+P_MdW^{{\bf Q},K}_t,\quad t\in(0,T],\\
X^{K,M}_0&=P_Mu_0.
\end{aligned}
\right.
\end{align}
The optimal strong convergence rate of the spectral Galerkin method is proved as follows.

\begin{theorem}\label{tm-4}
	Suppose that $X^K$ and $X^{K,M}$ are mild solutions of \eqref{eq2} and \eqref{eq3}, respectively. Under Assumptions \ref{x0}-\ref{Q} with $\gamma\in(1,3-2H]$, it holds that
	\begin{align*}
	\sup_{t\in[0,T]}\left\| X^K_{t}-X^{K,M}_{t}  \right\|_{L^2(\Omega;V)}\le C\Big(1+\|u_0\|_{\dot{V}^{2H+\gamma-1}}\Big) M^{-(2H+\gamma-1)}.
	\end{align*}
\end{theorem}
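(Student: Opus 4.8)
The plan is to run the classical spectral Galerkin error analysis, exploiting that $P_M$ is a spectral projection for $A$ and therefore commutes with $A$, with $\{S_t\}_{t\ge0}$ and with all fractional powers of $A$. Since $e^{-tA_M}P_M=P_MS_t=S_tP_M$, the mild solution of \eqref{eq3} reads
\begin{align*}
X^{K,M}_t=P_MS_tu_0+\int_0^tS_{t-s}P_MF(X^{K,M}_s)\,ds+\int_0^tS_{t-s}P_M\,dW^{{\bf Q},K}_s,
\end{align*}
so subtracting from the mild form of $X^K$ and inserting $\pm P_MF(X^K_s)$ yields $X^K_t-X^{K,M}_t=E_1+E_2+E_3+E_4$ with
\begin{align*}
E_1&:=({\rm Id}_V-P_M)S_tu_0,\qquad E_2:=\int_0^tS_{t-s}({\rm Id}_V-P_M)F(X^K_s)\,ds,\\
E_3&:=\int_0^tS_{t-s}P_M\big(F(X^K_s)-F(X^{K,M}_s)\big)\,ds,\qquad E_4:=\int_0^t({\rm Id}_V-P_M)S_{t-s}\,dW^{{\bf Q},K}_s.
\end{align*}
The goal is to bound $E_1,E_2,E_4$ in $L^2(\Omega;V)$ by $CM^{-(2H+\gamma-1)}(1+\|u_0\|_{\dot{V}^{2H+\gamma-1}})$ and $\|E_3\|_{L^2(\Omega;V)}$ by $C\int_0^t\|X^K_s-X^{K,M}_s\|_{L^2(\Omega;V)}\,ds$, and then close the estimate with Gronwall's inequality. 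Throughout I use the spectral bound $\|({\rm Id}_V-P_M)A^{-\frac{\beta}{2}}\|_{\mathcal{L}(V)}=\lambda_{M+1}^{-\frac{\beta}{2}}\le CM^{-\beta}$ for $\beta\ge0$ (valid because $\lambda_j=j^2\pi^2$), together with the a priori estimate for $X^K$ in $\dot{V}^{2H+\gamma-1}$ and its temporal H\"older regularity, which follow from the proof of Theorem \ref{tm-regu} with constants independent of $K$ because $W^{{\bf Q},K}$ differs from $W^{{\bf Q}}$ only by truncation of the noise series.

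For $E_1$, since $S_t$ is a contraction on $\dot{V}^{2H+\gamma-1}$, $\|E_1\|_V\le\|({\rm Id}_V-P_M)A^{-\frac{2H+\gamma-1}{2}}\|_{\mathcal{L}(V)}\|u_0\|_{\dot{V}^{2H+\gamma-1}}\le CM^{-(2H+\gamma-1)}\|u_0\|_{\dot{V}^{2H+\gamma-1}}$. For $E_4$, Lemma \ref{lm-ito} gives
\begin{align*}
\mathbb{E}\big[\|E_4\|_V^2\big]=\sum_{i=1}^{K}\int_0^t\int_0^t\big\langle({\rm Id}_V-P_M)S_{t-u}{\bf Q}^{\frac12}f_i,({\rm Id}_V-P_M)S_{t-v}{\bf Q}^{\frac12}f_i\big\rangle_V\phi(u,v)\,du\,dv.
\end{align*}
Writing $({\rm Id}_V-P_M)S_{t-u}{\bf Q}^{\frac12}f_i=A^HS_{t-u}g_i$ with $g_i:=({\rm Id}_V-P_M)A^{-H}{\bf Q}^{\frac12}f_i$ and applying Lemma \ref{lm-phi} with $\rho=H$ (so the time factor is $t^{0}$), the $i$-th summand is at most $C\|g_i\|_V^2$; since $A^{-H}=A^{-\frac{2H+\gamma-1}{2}}A^{\frac{\gamma-1}{2}}$, the spectral bound yields $\|g_i\|_V\le CM^{-(2H+\gamma-1)}\|A^{\frac{\gamma-1}{2}}{\bf Q}^{\frac12}f_i\|_V$, and summing over $i$ with Assumption \ref{Q} gives $\mathbb{E}[\|E_4\|_V^2]\le CM^{-2(2H+\gamma-1)}\|A^{\frac{\gamma-1}{2}}\|_{\mathcal{L}^0_2}^2$.

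The term $E_2$ requires care at the endpoint $\gamma=3-2H$, where $2H+\gamma-1=2$ and $\|A^{\frac{2H+\gamma-1}{2}}S_{t-s}\|_{\mathcal{L}(V)}\sim(t-s)^{-1}$ is not integrable; the remedy is to freeze the nonlinearity, $E_2=\int_0^tS_{t-s}({\rm Id}_V-P_M)F(X^K_t)\,ds+\int_0^tS_{t-s}({\rm Id}_V-P_M)\big(F(X^K_s)-F(X^K_t)\big)\,ds$. For the first summand, $\|A\int_0^tS_{t-\sigma}x\,d\sigma\|_V\le C\|x\|_V$ (Lemma \ref{lm-A} with $\alpha=1$) and $\|({\rm Id}_V-P_M)A^{-1}\|_{\mathcal{L}(V)}\le CM^{-2}$ give $CM^{-2}\|F(X^K_t)\|_{L^2(\Omega;V)}\le CM^{-2}(1+\|u_0\|_{\dot{V}^{2H+\gamma-1}})$ via Assumption \ref{F} and the a priori bound for $X^K$. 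For the second summand, $\|S_{t-s}({\rm Id}_V-P_M)\|_{\mathcal{L}(V)}\le\|AS_{t-s}\|_{\mathcal{L}(V)}\|A^{-1}({\rm Id}_V-P_M)\|_{\mathcal{L}(V)}\le CM^{-2}(t-s)^{-1}$, while Assumption \ref{F} and the temporal H\"older estimate of Theorem \ref{tm-regu} with $\mu=0$ (whose exponent equals $H$ since $\gamma>1$) give $\|F(X^K_s)-F(X^K_t)\|_{L^2(\Omega;V)}\le C(1+\|u_0\|_{\dot{V}^{2H+\gamma-1}})|t-s|^H$; since $\int_0^t(t-s)^{H-1}\,ds<\infty$, this yields $\|E_2\|_{L^2(\Omega;V)}\le CM^{-2}(1+\|u_0\|_{\dot{V}^{2H+\gamma-1}})\le CM^{-(2H+\gamma-1)}(1+\|u_0\|_{\dot{V}^{2H+\gamma-1}})$, using $2H+\gamma-1\le2$. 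Finally, $\|E_3\|_{L^2(\Omega;V)}\le C\int_0^t\|X^K_s-X^{K,M}_s\|_{L^2(\Omega;V)}\,ds$ follows from the global Lipschitz bound on $F$ in Assumption \ref{F} and the uniform boundedness of $\{S_t\}_{t\ge0}$ and $P_M$ on $V$. Collecting the four bounds and applying Gronwall's inequality to $t\mapsto\|X^K_t-X^{K,M}_t\|_{L^2(\Omega;V)}$ completes the proof. I expect the main obstacle to be the bookkeeping in $E_4$ — extracting exactly the power $A^H$ from $S_{t-u}$ so that Lemma \ref{lm-phi} applies with $\rho=H$ while leaving $A^{\frac{\gamma-1}{2}}$ acting on ${\bf Q}^{\frac12}f_i$, which simultaneously produces the rate $M^{-(2H+\gamma-1)}$ and matches Assumption \ref{Q} — together with the freezing argument needed for $E_2$ at the endpoint $\gamma=3-2H$.
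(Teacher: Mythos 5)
Your proposal is correct and follows essentially the same route as the paper: the same four-term decomposition, the same device of freezing the nonlinearity at time $t$ to handle the endpoint $\gamma=3-2H$, and Gronwall's inequality to close. The only deviations are minor: you bound the stochastic convolution directly via Lemma \ref{lm-phi} with $\rho=H$ together with the spectral estimate $\big\|({\rm Id}_V-P_M)A^{-\frac{2H+\gamma-1}{2}}\big\|_{\mathcal{L}(V)}\le CM^{-(2H+\gamma-1)}$ where the paper instead cites \cite[Lemma 4.1]{wang17BIT}, and you estimate the frozen term through $\big\|A\int_0^tS_{t-\sigma}x\,d\sigma\big\|_V\le C\|x\|_V$ and $\|A^{-1}({\rm Id}_V-P_M)\|_{\mathcal{L}(V)}\le CM^{-2}$ rather than through the $\dot{V}^{2H+\gamma-1}$-norm of $F(X^K_t)$, which if anything sidesteps a regularity question for the Nemytskii operator that the paper glosses over.
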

\begin{proof}
	Define by $\{S^M_{t}\}_{t\ge 0}$ the analytic semigroup generated by $-A_M$. 
	We decompose the error by
	\begin{align*}
	&\left\| X^K_{t}-X^{K,M}_{t}  \right\|_{L^2(\Omega;V)}\\
	\le & \left\| ({\rm Id_V}-P_M)u_0\right\|_{L^2(\Omega;V)}
	+\left\| \int_{0}^{t} \big(S_{t-s}-S^M_{t-s}\big)F(X^K_s)ds \right\|_{L^2(\Omega;V)}\\
		&+\left\| \int_{0}^{t} \big(S^M_{t-s}F(X^K_s)-S^M_{t-s}F(X^{K,M}_s)\big)ds \right\|_{L^2(\Omega;V)}
	+\left\| \int_{0}^{t} \big(S_{t-s}- S^M_{t-s}\big)dW^{{\bf Q},K}_s\right\|_{L^2(\Omega;V)}\\
	\le & C\|u_0\|_{\dot{V}^{2H+\gamma-1}}M^{-(2H+\gamma-1)}
		+ \int_{0}^{t}\left\| \big(S_{t-s}-S^M_{t-s}\big)F(X^K_s) \right\|_{L^2(\Omega;V)}ds\\
	&+\int_{0}^{t} \Big\| X^K_s-X^{K,M}_s\Big\|_{L^2(\Omega;V)}ds 
	+\left\| \int_{0}^{t} \big(S_{t-s}- S^M_{t-s}\big)dW^{{\bf Q},K}_s\right\|_{L^2(\Omega;V)}.
	\end{align*}
By Lemma \ref{lm-A} and Theorem \ref{tm-regu}, we get
\begin{align*}
		&\int_{0}^{t} \left\| \big(S_{t-s}-S^M_{t-s}\big)F(X^K_s)\right\|_{L^2(\Omega;V)}ds \\
				\le & \int_{0}^{t} \left\|\big(S_{t-s}-S^M_{t-s}\big)F(X^K_t)\right\|_{L^2(\Omega;V)}ds 
				+ \int_{0}^{t} \left\|\big(S_{t-s}-S^M_{t-s}\big)\big(F(X^K_t)-F(X^K_s)\big)\right\|_{L^2(\Omega;V)}ds \\
				\le & 				\int_{0}^{t} \left\|({\rm Id}_V-P_M)F(X^K_t)\right\|_{L^2(\Omega;V)}ds \\
				&+ \int_{0}^{t} \left\|A^{\frac{2H+\gamma-1}{2}}S_{t-s}({\rm Id}_V-P_M)A^{\frac{-(2H+\gamma-1)}{2}}\big(F(X^K_t)-F(X^K_s)\big)\right\|_{L^2(\Omega;V)}ds \\
				\le & 			CM^{-(2H+\gamma-1)}	 \left\|F(X^K_t)\right\|_{L^2(\Omega;\dot{V}^{2H+\gamma-1})} \\
				&+ CM^{-(2H+\gamma-1)}	\int_{0}^{t} (t-s)^{\frac{-(2H+\gamma-1)}{2}} \left\|F(X^K_t)-F(X^K_s)\right\|_{L^2(\Omega;V)}ds \\
				\le& CM^{-(2H+\gamma-1)}.
\end{align*}
According to Lemma \ref{lm-ito} and \cite[Lemma 4.1]{wang17BIT}, we have 
\begin{align*}
&\left\| \int_{0}^{t} \big(S_{t-s}- S^M_{t-s}\big)dW^{{\bf Q},K}_s\right\|^2_{L^2(\Omega;V)}\\
=&\sum_{i=1}^{K}\int_{0}^{t}\int_{0}^{t}\langle \big(S_{t-u}- S^M_{t-u}\big){\bf Q}^\frac12 f_i,\big(S_{t-v}- S^M_{t-v}\big){\bf Q}^\frac12 f_i\rangle _V\phi(u,v)dudv\\
\le & C\sum_{i=1}^{\infty}M^{-2(2H+\gamma-1)}\big\|{\bf Q}^\frac12 f_i\big\|^2_{\dot{V}^{\gamma-1}}\\
\le & CM^{-2(2H+\gamma-1)}\left\|A^{\frac{\gamma-1}{2}}\right\|^2_{\mathcal{L}^0_2}.
\end{align*}
	Then the strong convergence order of the spectral Galerkin method in spatial direction is a consequence of the Gronwall's inequality.
\end{proof}

\begin{remark}
Suppose $U=V$, $K=M$ and $W^{\bf Q}_t=\sum_{i=1}^{\infty}\eta_ie_i\beta_t^i$.  One can utilize $P_M$ to perform the spectral Galerkin method in terms of the noise $W^{\bf Q}$ and the solution $X$ at the same time,
with the spatial convergence rate $\mathcal{O}\big(M^{-(2H+\gamma-1)}\big)$.
\end{remark}

The following lemma gives the estimates for the fourth-order moment of the stochastic integral with respect to the $K$-dimensional fBm. Based on these estimates, we obtain the temporal regularity of $X^{K,M}$ in $L^4(\Omega;V)$.

\begin{lemma}\label{lm-regu4}
	Let Assumption \ref{Q} be satisfied with $\gamma>3-2H$. Then 
	\begin{align*}
	&\left\|\int_{0}^{t}S^M_{t-s}dW_s^{{\bf Q},K}\right\|_{L^4\left(\Omega;\dot{V}^{\frac{2H+\gamma-1}{2}}\right)}\le C,\\
	&\left\|\int_{s}^{t}S^M_{t-\sigma}dW_\sigma^{{\bf Q},K}\right\|_{L^4(\Omega;V)}\le C|t-s|^{\frac12}.
	\end{align*}
\end{lemma}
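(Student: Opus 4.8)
The plan is to bound the two quantities by the $L^p(\Omega)$-estimate for the Skorohod integral provided in Lemma \ref{Lp} with $K$-dimensional fBm and $p=4$, combined with the smoothing properties of $\{S^M_t\}_{t\ge0}$ recorded in Lemma \ref{lm-A}. Since the integrand $\Phi_s=S^M_{t-s}$ (or $S^M_{t-\sigma}$ composed with $P_M{\bf Q}^{\frac12}$) is deterministic, its Malliavin derivative vanishes identically; consequently the term $\|D\varphi\|^2_{|\mathcal{H}|\otimes|\mathcal{H}|}$ in Lemma \ref{Lp} is zero, and the bound reduces to the deterministic double integral $\big\|\mathbb{E}[\varphi]\big\|^4_{|\mathcal{H}|}$. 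Writing the $V$-norm of the Skorohod integral componentwise against the basis $\{e_j\}$, I get, for each scalar component,
\begin{align*}
\mathbb{E}\Bigg[\bigg|\int_0^t\langle A^{\frac{2H+\gamma-1}{2}}S^M_{t-s}{\bf Q}^{\frac12}f_i,e_j\rangle_V\,\delta\beta^i_s\bigg|^4\Bigg]
\le C(H)\bigg(\int_{[0,t]^2}\big|\langle A^{\frac{2H+\gamma-1}{2}}S^M_{t-u}{\bf Q}^{\frac12}f_i,e_j\rangle_V\big|\,\big|\langle A^{\frac{2H+\gamma-1}{2}}S^M_{t-v}{\bf Q}^{\frac12}f_i,e_j\rangle_V\big|\,\phi(u,v)\,du\,dv\bigg)^2,
\end{align*}
so after summing over $i,j$ and using the near-diagonalization of $S^M$ and $A$ in the eigenbasis, the whole fourth moment is controlled by $\big(\sum_i\int_{[0,t]^2}\langle A^{\frac{2H+\gamma-1}{2}}S^M_{t-u}{\bf Q}^{\frac12}f_i,A^{\frac{2H+\gamma-1}{2}}S^M_{t-v}{\bf Q}^{\frac12}f_i\rangle_V\phi(u,v)\,du\,dv\big)^2$, i.e. by the square of the second-moment expression already estimated in the proof of Theorem \ref{tm-regu}. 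This is the key reduction: for a Gaussian (deterministic-integrand) stochastic integral the fourth moment is comparable to the square of the second moment, so no new structure is needed.

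Carrying this out: for the first estimate I apply Lemma \ref{lm-phi} with $\rho=H$ (legitimate since $0\le H\le H$) to the inner double integral after inserting $A^{\frac{2H+\gamma-1}{2}}=A^{H}A^{\frac{\gamma-1}{2}}$ and writing $S^M_{t-u}{\bf Q}^{\frac12}f_i = S^M_{t-u}A^{-\frac{\gamma-1}{2}}(A^{\frac{\gamma-1}{2}}{\bf Q}^{\frac12}f_i)$; the $t^{2(H-H)}=1$ factor from Lemma \ref{lm-phi} and $\|A^\mu S^M_t\|_{\mathcal L(V)}\le\|A^\mu S_t\|_{\mathcal L(V)}$ give a bound $C\|A^{\frac{\gamma-1}{2}}{\bf Q}^{\frac12}f_i\|_V^2$ per mode. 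Summing over $i$ yields $C\|A^{\frac{\gamma-1}{2}}\|_{\mathcal L^0_2}^2$, finite by Assumption \ref{Q}, hence the fourth moment is $\le C\|A^{\frac{\gamma-1}{2}}\|_{\mathcal L^0_2}^4\le C$. For the second estimate I apply Lemma \ref{lm-phi} with $\rho=0$ on $[s,t]$, which gives the factor $(t-s)^{2H}$ and the per-mode bound $C(t-s)^{2H}\|{\bf Q}^{\frac12}f_i\|_V^2$; since $\gamma>3-2H$ implies $\gamma-1>0$, Assumption \ref{Q} controls $\sum_i\|{\bf Q}^{\frac12}f_i\|_V^2=\|{\bf Q}^{\frac12}\|_{\mathcal L_2(U,V)}^2\le C\|A^{\frac{\gamma-1}{2}}\|_{\mathcal L^0_2}^2$, so the fourth moment is $\le C(t-s)^{2H}\le C(t-s)^{2}\cdot(t-s)^{2H-2}$. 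Here one must be slightly careful: $2H<2$, so $(t-s)^{2H}\ge (t-s)^2$ on $[0,T]$ only fails; instead I simply note the desired exponent is $\tfrac12$ in $L^4$, i.e. exponent $2$ in the fourth moment, and $(t-s)^{2H}$ does \emph{not} in general dominate... — this is the point requiring care. The correct route for the second bound is to interpolate: use Lemma \ref{lm-phi} with $\rho=H$ applied to $S^M_{t-\sigma}=S^M_{t-\sigma}A^{-\frac{\gamma-1}{2}}A^{\frac{\gamma-1}{2}}$ together with $\|A^{-\frac{\gamma-1}{2}}(S^M_{t}-{\rm Id})\|\le Ct^{\frac{\gamma-1}{2}}$ when restricting to a shifted argument, which upgrades the power from $2H$ to $\min\{2H,\ldots\}$; since $\gamma>3-2H$ gives $2H+\gamma-1>2$, the effective regularity exceeds $2$ and one extracts the clean $(t-s)^{2}$ in the fourth moment, hence $(t-s)^{1/2}$ in $L^4$.

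I expect the main obstacle to be exactly this bookkeeping of the H\"older exponent in the second estimate: one needs the condition $\gamma>3-2H$ (equivalently $2H+\gamma-1>2$) to convert the intrinsically $H$-H\"older-in-time behaviour of the stochastic convolution into the \emph{exponent $\tfrac12$} demanded here, and this must be done so that the resulting constant is genuinely independent of $M$ — which it is, because every use of $\|A^\mu S^M_t\|_{\mathcal L(V)}$ is dominated by $\|A^\mu S_t\|_{\mathcal L(V)}$ uniformly in $M$, and $P_M$ commutes with $A$ and is a contraction. Once the exponent accounting is settled, both inequalities follow by the routine Gronwall-free direct estimates above; no fixed-point or recursion is needed since the integrand is deterministic.
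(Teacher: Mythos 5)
Your route is genuinely different from the paper's, and for the most part it is the sharper one. The paper never invokes Gaussianity: it expands the convolution in the eigenbasis $\{e_i\}$ of $A$, applies the triangle inequality in $L^4(\Omega;V)$ to reduce to an $\ell^1$ sum over modes of scalar $L^4$ norms, bounds each scalar Wiener integral by Lemma \ref{Lp} (with the Malliavin term vanishing, as you also note), and then controls $\sum_i\bigl[\int\!\!\int\lambda_i^{\frac{2H-\gamma+1}{2}}e^{-\lambda_i(t-u)}e^{-\lambda_i(t-v)}\phi(u,v)\,du\,dv\bigr]^{1/2}$ by a H\"older inequality in the time variable with exponent $p=\frac{1}{2-2H+\epsilon}$, which extracts $\lambda_i^{-1-1/q}$ (resp.\ $|t-s|\,\lambda_i^{-1/q}$) with $\frac1q=2H-1-\epsilon$; the hypothesis $\gamma>3-2H$ is exactly what makes that $\ell^1$ sum over $i$ converge, and the $\ell^1$ summation is also why the paper only claims the half regularity $\dot{V}^{\frac{2H+\gamma-1}{2}}$. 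Your observation that the integrand is deterministic, so the integral is a $V$-valued centered Gaussian with $\mathbb{E}\|Z\|_V^4\le 3\,(\mathbb{E}\|Z\|_V^2)^2$, short-circuits all of this and reduces both claims to second-moment computations already available from Lemmas \ref{lm-phi} and \ref{lm-ito}. For the first bound this works (you even prove the stronger statement in $\dot{V}^{2H+\gamma-1}$, since you wrote $A^{\frac{2H+\gamma-1}{2}}$ where the stated norm only requires $A^{\frac{2H+\gamma-1}{4}}$ --- harmless). Do make the recombination over components explicit, though: either quote the Gaussian moment inequality for Hilbert-space-valued Gaussians, or use $\mathbb{E}\bigl[(\sum_jZ_j^2)^2\bigr]\le\bigl(\sum_j(\mathbb{E}Z_j^4)^{1/2}\bigr)^2$; ``summing over $i,j$'' by itself does not produce the square of the sum.

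The second estimate is where your write-up actually breaks, and the break is self-inflicted. Lemma \ref{lm-phi} with $\rho=0$ gives the \emph{second} moment bound $\mathbb{E}\|\int_s^tS^M_{t-\sigma}dW^{{\bf Q},K}_\sigma\|_V^2\le C(t-s)^{2H}\sum_i\|{\bf Q}^{\frac12}f_i\|_V^2$, so by your own Gaussian reduction the fourth moment is at most $C(t-s)^{4H}$, not $C(t-s)^{2H}$ as you wrote. Since $H>\frac12$ we have $4H>2$, hence $(t-s)^{4H}\le T^{4H-2}(t-s)^{2}$, which is exactly the exponent $2$ needed for the fourth moment of a quantity that is $O((t-s)^{1/2})$ in $L^4$. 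In other words, the direct argument you abandoned closes immediately; the ``interpolation'' you substitute for it --- invoking $\|A^{-\frac{\gamma-1}{2}}(S^M_t-{\rm Id})\|\le Ct^{\frac{\gamma-1}{2}}$ ``with a shifted argument'' to upgrade the power --- is not a proof: $S^M_t-{\rm Id}$ does not occur in the stochastic convolution over $[s,t]$, Lemma \ref{lm-A} only provides that estimate for exponents in $[0,1]$, and no inequality is actually derived there. Replace that paragraph by the corrected exponent count and your proof is complete, with constants uniform in $K$ and $M$ (using $\|{\bf Q}^{\frac12}f_i\|_V\le C\|A^{\frac{\gamma-1}{2}}{\bf Q}^{\frac12}f_i\|_V$ for $\gamma\ge1$ and the fact that $P_M$ is an orthogonal projection commuting with $A$).
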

\begin{proof}
	By the definition of $W^{{\bf Q},K}$, we have
	\begin{align*}
	&\left\|\int_{0}^{t}S^M_{t-s}dW_s^{{\bf Q},K}\right\|_{L^4\left(\Omega;\dot{V}^{\frac{2H+\gamma-1}{2}}\right)}\\
	=&\left\|\sum_{j=1}^{K}\int_{0}^{t}A^{\frac{2H+\gamma-1}{4}}S^M_{t-s}{\bf Q}^\frac12f_jd\beta^j_s\right\|_{L^4\left(\Omega;V\right)}\\
	=&\left\|\sum_{i=1}^{M}\sum_{j=1}^{K}\int_{0}^{t}\langle A^{\frac{2H+\gamma-1}{4}}S^M_{t-s}{\bf Q}^\frac12f_j,e_i\rangle _Vd\beta^j_se_i\right\|_{L^4\left(\Omega;V\right)}\\
	\le &\sum_{i=1}^{M}\left\|\sum_{j=1}^{K}\int_{0}^{t}\langle A^{\frac{2H+\gamma-1}{4}}S^M_{t-s}{\bf Q}^\frac12f_j,e_i\rangle _Vd\beta^j_se_i\right\|_{L^4\left(\Omega;V\right)}\\
	\le&\sum_{i=1}^{M}\left\|\sum_{j=1}^{K}\int_{0}^{t}\langle A^{\frac{2H+\gamma-1}{4}}S^M_{t-s}{\bf Q}^\frac12f_j,e_i\rangle _Vd\beta^j_s\right\|_{L^4\left(\Omega;\mathbb{R}\right)}.
	\end{align*}
	Denoting
	$$B^K_s=(\beta^1_s,\cdots,\beta_s^K)$$
	and 
		$$\varphi_s=\Big(\langle A^{\frac{2H+\gamma-1}{4}}S^M_{t-s}{\bf Q}^\frac12f_1,e_i\rangle _V,\cdots,\langle A^{\frac{2H+\gamma-1}{4}}S^M_{t-s}{\bf Q}^\frac12f_K,e_i\rangle _V\Big)\in\mathbb{R}^K,$$
	we derive from Lemma \ref{Lp} that
	\begin{align*}
	&\left\|\int_{0}^{t}S^M_{t-s}dW_s^{{\bf Q},K}\right\|_{L^4\left(\Omega;\dot{V}^{\frac{2H+\gamma-1}{2}}\right)}\\
	\le&\sum_{i=1}^{M}\left\|\int_{0}^{t}\varphi_s\delta B^K_s\right\|_{L^4\left(\Omega;\mathbb{R}\right)}\\
	\le &C\sum_{i=1}^{M} \Bigg[\sum_{j=1}^{K}\int_{0}^{t}\int_{0}^{t}\left|\langle A^{\frac{2H+\gamma-1}{4}}S^M_{t-u}{\bf Q}^\frac12f_j,e_i\rangle _V\right|
	\left|\langle A^{\frac{2H+\gamma-1}{4}}S^M_{t-v}{\bf Q}^\frac12f_j,e_i\rangle _V\right| \phi(u,v)dudv\Bigg]^\frac12.
	\end{align*}
	Under Assumption \ref{Q}, we obtain
	\begin{align*}
	&\sum_{j=1}^{K}\int_{0}^{t}\int_{0}^{t}\left|\langle A^{\frac{2H+\gamma-1}{4}}S^M_{t-u}{\bf Q}^\frac12f_j,e_i\rangle _V\right|
	\left|\langle A^{\frac{2H+\gamma-1}{4}}S^M_{t-v}{\bf Q}^\frac12f_j,e_i\rangle _V\right| \phi(u,v)dudv\\
	\le &\sum_{j=1}^{\infty}\left\|A^{\frac{\gamma-1}{2}}{\bf Q}^\frac12f_j\right\|^2_V\int_{0}^{t}\int_{0}^{t}\lambda_i^{\frac{2H-\gamma+1}{2}}e^{-\lambda_i(t-u)}e^{-\lambda_i(t-v)} \phi(u,v)dudv\\
	= &\left\| A^{\frac{\gamma-1}{2}} \right\|^2_{\mathcal{L}^0_2}\int_{0}^{t}\int_{0}^{t}\lambda_i^{\frac{2H-\gamma+1}{2}}e^{-\lambda_i(t-u)}e^{-\lambda_i(t-v)} \phi(u,v)dudv.
	\end{align*}
	Hence,
	\begin{align*}
	&\left\|\int_{0}^{t}S^M_{t-s}dW_s^{{\bf Q},K}\right\|_{L^4\left(\Omega;\dot{V}^{\frac{2H+\gamma-1}{2}}\right)}\\
	\le &C\left\| A^{\frac{\gamma-1}{2}} \right\|_{\mathcal{L}^0_2}\sum_{i=1}^{M} \Bigg[\int_{0}^{t}\int_{0}^{t}\lambda_i^{\frac{2H-\gamma+1}{2}}e^{-\lambda_i(t-u)}e^{-\lambda_i(t-v)} \phi(u,v)dudv\Bigg]^\frac12.
	\end{align*}
	Similarly, 
	\begin{align*}
	&\left\|\int_{s}^{t}S^M_{t-\sigma}dW_\sigma^{{\bf Q},K}\right\|_{L^4(\Omega;V)}\\
	\le &C\left\| A^{\frac{\gamma-1}{2}} \right\|_{\mathcal{L}^0_2}\sum_{i=1}^{M} \Bigg[\int_{s}^{t}\int_{s}^{t}\lambda_i^{-(\gamma-1)}e^{-\lambda_i(t-u)}e^{-\lambda_i(t-v)} \phi(u,v)dudv\Bigg]^\frac12.
	\end{align*}
	
	It suffices to show
	\begin{align*}
	\sum_{i=1}^{\infty} \Bigg[\int_{0}^{t}\int_{0}^{t}\lambda_i^{\frac{2H-\gamma+1}{2}}e^{-\lambda_i(t-u)}e^{-\lambda_i(t-v)} \phi(u,v)dudv\Bigg]^\frac12
	\le  C
	\end{align*}
	and 
	\begin{align*}
	\sum_{i=1}^{\infty} \Bigg[\int_{s}^{t}\int_{s}^{t}\lambda_i^{-(\gamma-1)}e^{-\lambda_i(t-u)}e^{-\lambda_i(t-v)} \phi(u,v)dudv\Bigg]^\frac12
	\le C|t-s|^{\frac12}.
	\end{align*}
	Indeed, since $H\in(\frac12,1)$, there exists some $p>1$ such that $(2H-2)p>-1$. Applying the H\"older inequality with $\frac{1}{p}+\frac{1}{q}=1$, we obtain
	\begin{align*}
	&\int_{0}^{t}\int_{0}^{t}e^{-\lambda_i(t-u)}e^{-\lambda_i(t-v)} \phi(u,v)dudv\\
	=&\int_{0}^{t}e^{-\lambda_i(t-v)}\left[\int_{0}^{t}e^{-\lambda_i(t-u)} \phi(u,v)du\right]dv\\
	=&\int_{0}^{t}e^{-\lambda_i(t-v)}\left[\int_{0}^{t}e^{-\lambda_i(t-u)q} du\right]^{\frac1q}\left[\int_{0}^{t}\phi(u,v)^pdu\right]^{\frac1p} dv\\
	\le &C\int_{0}^{t}e^{-\lambda_i(t-v)}\left[\int_{0}^{t}e^{-\lambda_i(t-u)q} du\right]^{\frac1q} dv
	\le C\frac{1}{\lambda_i}\left(\frac{1}{q\lambda_i}\right)^{\frac1q}.
	\end{align*}
	Let $\alpha=\gamma-(3-2H)$. Taking $0<\epsilon<\min\big\{2H-1,\frac{\alpha}{2}\big\}$ and $p=\frac{1}{2-2H+\epsilon}$, we have $p>1$, $(2H-2)p>-1$ and $\frac{1}{q}=2H-1-\epsilon$, which leads to 
	\begin{align*}
	&\sum_{i=1}^{\infty} \Bigg[\int_{0}^{t}\int_{0}^{t}\lambda_i^{\frac{2H-\gamma+1}{2}}e^{-\lambda_i(t-u)}e^{-\lambda_i(t-v)} \phi(u,v)dudv\Bigg]^\frac12\\
	\le &C\sum_{i=1}^{\infty} i^{\frac{2H-\gamma+1}{2}}\frac{1}{i}\left(\frac{1}{i}\right)^{\frac1q}=C\sum_{i=1}^{\infty} i^{-1+\epsilon-\frac{\alpha}{2}}\le C.
	\end{align*}
	Similarly, based on 
	\begin{align*}
	&\int_{s}^{t}\int_{s}^{t}e^{-\lambda_i(t-u)}e^{-\lambda_i(t-v)} \phi(u,v)dudv\\
	=&\int_{s}^{t}e^{-\lambda_i(t-v)}\left[\int_{s}^{t}e^{-\lambda_i(t-u)} \phi(u,v)du\right]dv\\
	=&\int_{s}^{t}e^{-\lambda_i(t-v)}\left[\int_{s}^{t}e^{-\lambda_i(t-u)q} du\right]^{\frac1q}\left[\int_{s}^{t}\phi(u,v)^pdu\right]^{\frac1p} dv\\
	\le &C\int_{s}^{t}e^{-\lambda_i(t-v)}\left[\int_{s}^{t}e^{-\lambda_i(t-u)q} du\right]^{\frac1q} dv
	\le C|t-s|\left(\frac{1}{q\lambda_i}\right)^{\frac1q},
	\end{align*}
	we obtain
	\begin{align*}
	&\sum_{i=1}^{\infty} \Bigg[\int_{s}^{t}\int_{s}^{t}\lambda_i^{-(\gamma-1)}e^{-\lambda_i(t-u)}e^{-\lambda_i(t-v)} \phi(u,v)dudv\Bigg]^\frac12\\
	\le & C\sum_{i=1}^{\infty} |t-s|^{\frac12}i^{-(\gamma-1)}\left(\frac{1}{i}\right)^{\frac1q}\le  C\sum_{i=1}^{\infty} |t-s|^{\frac12}i^{-1+\epsilon-\alpha}\le C |t-s|^{\frac12}.
	\end{align*}
	This finishes the proof.
\end{proof}

\begin{theorem}\label{tm-regu4}
	Under Assumptions \ref{x0}-\ref{Q} with $\gamma\in(3-2H,5-2H]$, the mild solution of \eqref{eq3} satisfies
	\begin{align*}
	&\left\|X^{K,M}_t\right\|_{L^4\left(\Omega;\dot{V}^{\frac{2H+\gamma-1}{2}}\right)}\le C\Big(1+\|u_0\|_{\dot{V}^{2H+\gamma-1}}\Big),\\
	&\left\|X^{K,M}_t-X^{K,M}_s\right\|_{L^4(\Omega;V)}\le C\Big(1+\|u_0\|_{\dot{V}^{2H+\gamma-1}}\Big)|t-s|^{\frac12}.
	\end{align*}
\end{theorem}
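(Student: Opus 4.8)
The mild solution of \eqref{eq3} reads
\[
X^{K,M}_t=S^M_tP_Mu_0+\int_0^tS^M_{t-s}P_MF(X^{K,M}_s)\,ds+\int_0^tS^M_{t-s}\,dW^{{\bf Q},K}_s.
\]
The plan is to first establish the crude a priori bound $\sup_{t\in[0,T]}\|X^{K,M}_t\|_{L^4(\Omega;V)}\le C(1+\|u_0\|_{\dot{V}^{2H+\gamma-1}})$: using $\|S^M_t\|_{\mathcal{L}(V)}\le1$, the linear growth $\|F(x)\|_V\le C(1+\|x\|_V)$ following from Assumption \ref{F}, Minkowski's inequality, and the first estimate of Lemma \ref{lm-regu4} (which applies since $\gamma>3-2H$) for the stochastic convolution, Gronwall's inequality closes this estimate uniformly in $K,M$.

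Next, for $\gamma\in(3-2H,5-2H)$ I would bound the three terms of $X^{K,M}_t$ in $L^4\big(\Omega;\dot{V}^{(2H+\gamma-1)/2}\big)$ separately. The initial term satisfies $\|A^{(2H+\gamma-1)/4}S^M_tP_Mu_0\|_V\le\|A^{-(2H+\gamma-1)/4}\|_{\mathcal{L}(V)}\|u_0\|_{\dot{V}^{2H+\gamma-1}}\le C\|u_0\|_{\dot{V}^{2H+\gamma-1}}$ by Lemma \ref{lm-A}. For the deterministic convolution, Lemma \ref{lm-A} together with Minkowski's inequality yields a bound by $C\int_0^t(t-s)^{-(2H+\gamma-1)/4}\big(1+\|X^{K,M}_s\|_{L^4(\Omega;V)}\big)\,ds$, which is finite because $(2H+\gamma-1)/4<1$, hence $\le C$ by the a priori bound. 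The stochastic convolution is $\le C$ directly from Lemma \ref{lm-regu4}. For the temporal estimate (valid for all $\gamma\in(3-2H,5-2H]$) I would decompose
\[
X^{K,M}_t-X^{K,M}_s=(S^M_{t-s}-{\rm Id}_V)X^{K,M}_s+\int_s^tS^M_{t-\sigma}P_MF(X^{K,M}_\sigma)\,d\sigma+\int_s^tS^M_{t-\sigma}\,dW^{{\bf Q},K}_\sigma.
\]
Using $\|A^{-1/2}(S^M_{t-s}-{\rm Id}_V)\|_{\mathcal{L}(V)}\le C|t-s|^{1/2}$ (Lemma \ref{lm-A} with $\alpha=\tfrac12$) and the embedding $\dot{V}^{(2H+\gamma-1)/2}\hookrightarrow\dot{V}^1$ (valid since $2H+\gamma-1>2$), the first term is $\le C|t-s|^{1/2}\|X^{K,M}_s\|_{\dot{V}^{(2H+\gamma-1)/2}}$; the deterministic term is $\le C|t-s|\le C|t-s|^{1/2}$ by the a priori bound; the stochastic term is $\le C|t-s|^{1/2}$ by the second estimate of Lemma \ref{lm-regu4}. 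Thus the temporal Hölder bound follows as soon as a spatial $L^4(\Omega;\dot{V}^\beta)$ bound with some $\beta\ge1$ is available.

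The main obstacle is the endpoint $\gamma=5-2H$, where $\dot{V}^{(2H+\gamma-1)/2}=\dot{V}^2$ and the kernel $(t-s)^{-(2H+\gamma-1)/4}$ of the deterministic convolution degenerates to the non-integrable $(t-s)^{-1}$. I would resolve this by a bootstrap: first run the argument above with the exponent $\tfrac34<1$ to obtain the weaker bound $\sup_t\|X^{K,M}_t\|_{L^4(\Omega;\dot{V}^{3/2})}\le C$ (the stochastic convolution still lies in $\dot{V}^2\hookrightarrow\dot{V}^{3/2}$), which by the temporal argument produces $\|X^{K,M}_t-X^{K,M}_s\|_{L^4(\Omega;V)}\le C|t-s|^{1/2}$; then upgrade to $\dot{V}^2$ through the splitting $F(X^{K,M}_s)=F(X^{K,M}_t)+\big(F(X^{K,M}_s)-F(X^{K,M}_t)\big)$. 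By Lemma \ref{lm-A} with $\alpha=1$ the constant part contributes $\|A\int_0^tS^M_{t-s}P_MF(X^{K,M}_t)\,ds\|_V\le C\|F(X^{K,M}_t)\|_V$, and, using the Lipschitz continuity of $F$ (Assumption \ref{F}) together with $\|AS^M_{t-s}\|_{\mathcal{L}(V)}\le C(t-s)^{-1}$, the increment part contributes $C\int_0^t(t-s)^{-1}\|X^{K,M}_s-X^{K,M}_t\|_{L^4(\Omega;V)}\,ds\le C\int_0^t(t-s)^{-1/2}\,ds\le C$; the initial and stochastic terms are again controlled by Lemma \ref{lm-A} and Lemma \ref{lm-regu4}. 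This yields $\|X^{K,M}_t\|_{L^4(\Omega;\dot{V}^2)}\le C(1+\|u_0\|_{\dot{V}^4})$ and completes the proof. The bootstrap in this last step is the only genuinely delicate point; everything else consists of routine applications of Lemmas \ref{lm-A} and \ref{lm-regu4} and Gronwall's inequality.
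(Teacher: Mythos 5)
Your proposal is correct and follows essentially the same route as the paper: the crude $L^4(\Omega;V)$ bound via Gronwall, the three-term decomposition with the kernel $(t-s)^{-(2H+\gamma-1)/4}$ and Lemma \ref{lm-regu4} for the spatial and temporal estimates, and the splitting $F(X^{K,M}_s)=F(X^{K,M}_t)+\bigl(F(X^{K,M}_s)-F(X^{K,M}_t)\bigr)$ combined with the $|t-s|^{1/2}$ H\"older bound to handle the endpoint $\gamma=5-2H$. Your explicit bootstrap through an intermediate space $\dot{V}^{3/2}$ (any $\dot{V}^{\beta}$ with $1\le\beta<2$ would do) is exactly the ordering the paper uses implicitly, since its temporal estimate only requires the $\dot{V}^{1}$ bound, which is available without the endpoint revision.
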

\begin{proof}
	According to Lemma \ref{lm-regu4}, we have
	\begin{align*}
	\left\|\int_{0}^{t}S^M_{t-s}dW_s^{{\bf Q},K}\right\|_{L^4(\Omega;V)}\le C.
	\end{align*}
    Then the Gronwall's inequality yields
	\begin{align*}
	\left\|X^{K,M}_t\right\|_{L^4(\Omega;V)}\le C\Big(1+\|u_0\|_{\dot{V}^{2H+\gamma-1}}\Big).
	\end{align*}
	It follows from Lemma \ref{lm-A} and Lemma \ref{lm-regu4} that for $\gamma\in(3-2H,5-2H)$, 
	\begin{align*}
	&\left\|X^{K,M}_t\right\|_{L^4\left(\Omega;\dot{V}^{\frac{2H+\gamma-1}{2}}\right)}\\
	\le &
	\|u_0\|_{\dot{V}^{2H+\gamma-1}}
	+\left\|\int_{0}^{t}S^M_{t-s}F(X^{K,M}_s)ds\right\|_{L^4\left(\Omega;\dot{V}^{\frac{2H+\gamma-1}{2}}\right)}
	+\left\|\int_{0}^{t}S^M_{t-s}dW_s^{{\bf Q},K}\right\|_{L^4\left(\Omega;\dot{V}^{\frac{2H+\gamma-1}{2}}\right)}\\
	\le &
	\|u_0\|_{\dot{V}^{2H+\gamma-1}}+\int_{0}^{t}\left\|A^{\frac{2H+\gamma-1}{4}}S^M_{t-s}F(X^{K,M}_s)\right\|_{L^4(\Omega;V)}ds
	+\left\|\int_{0}^{t}S^M_{t-s}dW_s^{{\bf Q},K}\right\|_{L^4\left(\Omega;\dot{V}^{\frac{2H+\gamma-1}{2}}\right)}\\
	\le &
	\|u_0\|_{\dot{V}^{2H+\gamma-1}}
	+\int_{0}^{t}|t-s|^{-\frac{2H+\gamma-1}{4}}\left\|F(X^{K,M}_s)\right\|_{L^4(\Omega;V)}ds
	+\left\|\int_{0}^{t}S^M_{t-s}dW_s^{{\bf Q},K}\right\|_{L^4\left(\Omega;\dot{V}^{\frac{2H+\gamma-1}{2}}\right)}\\
	\le& C\Big(1+\|u_0\|_{\dot{V}^{2H+\gamma-1}}\Big).
	\end{align*}
	Furthermore, we get from Lemma \ref{lm-regu4} that 
	\begin{align*}
	&\left\|X^{K,M}_t-X^{K,M}_s\right\|_{L^4(\Omega;V)}\\
	\le &
	\Big\|\big(S^M_{t-s}-{\rm Id}_V\big)X^{K,M}_s\Big\|_{L^4(\Omega;V)}
	+\left\|\int_{s}^{t}S^M_{t-\sigma}F(X^{K,M}_\sigma)d\sigma\right\|_{L^4(\Omega;V)}
	+\left\|\int_{s}^{t}S^M_{t-\sigma}dW_\sigma^{{\bf Q},K}\right\|_{L^4(\Omega;V)}\\
	\le & C|t-s|^{\frac{2H+\gamma-1}{4}\wedge \frac12}\Big\|X^{K,M}_s\Big\|_{L^4\left(\Omega;\dot{V}^{\frac{2H+\gamma-1}{2}\wedge \frac12}\right)}
	+C|t-s|\Big(1+\left\|u_0\right\|_{L^4(\Omega;V)}\Big)
	+C|t-s|^\frac12\\
	\le&C\Big(1+\|u_0\|_{\dot{V}^{2H+\gamma-1}}\Big)|t-s|^\frac12.
	\end{align*}
	If $\gamma=5-2H$, using Lemma \ref{lm-A}, we get
	\begin{align*}
	&\left\|\int_{0}^{t}S^M_{t-s}F(X^{K,M}_s)ds\right\|_{L^4\left(\Omega;\dot{V}^{\frac{2H+\gamma-1}{2}}\right)}\\
	\le &\left\|\int_{0}^{t}S^M_{t-s}F(X^{K,M}_t)ds\right\|_{L^4\left(\Omega;\dot{V}^{\frac{2H+\gamma-1}{2}}\right)}\\
	&+\left\|\int_{0}^{t}S^M_{t-s}\big(F(X^{K,M}_t)-F(X^{K,M}_s)\big)ds\right\|_{L^4\left(\Omega;\dot{V}^{\frac{2H+\gamma-1}{2}}\right)}\\
	\le&C\left\|X^{K,M}_t\right\|_{L^4\left(\Omega;V\right)}+\int_{0}^{t}|t-s|^{-\frac{2H+\gamma-1}{4}}\left\|X^{K,M}_t-X^{K,M}_s\right\|_{L^4(\Omega;V)}ds\\
	\le &C\Big(1+\|u_0\|_{\dot{V}^{2H+\gamma-1}}\Big),
	\end{align*}
	from which we complete the proof.
\end{proof}


\section{Full discretization}\label{sec5}
In this section, we apply the exponential integrator to \eqref{eq3} to construct a fully discrete scheme, which is 
\begin{align}\label{eq4}
	\left\{
	\begin{aligned}
		X^{K,M,N}_{t_{n}}&:=S^M_{h}X^{K,M,N}_{t_{n-1}}+ S^M_{h}F(X^{K,M,N}_{t_{n-1}})h+S^M_{h}\Delta W^{{\bf Q},K}_{n},\\
		X^{K,M,N}_0&:=P_Mu_0,
\end{aligned}
\right.
\end{align}
where  $n=1,\cdots,N$, $h:=\frac{T}{N}$, $N\in \mathbb{N}_+$ and $\Delta W^{{\bf Q},K}_{n}:= W^{{\bf Q},K}_{t_n}- W^{{\bf Q},K}_{t_{n-1}}$.
		
Now we are in the position to prove that the exponential integrator is super-convergent in time, together with Lemma \ref{J1}, where the Malliavin calculus is an essential tool.
		
\begin{theorem}\label{lm-tem}\label{tm-6}
	Suppose that $X^{K,M}$ is the mild solution of \eqref{eq3} and that $X^{K,M,N}$ is defined by scheme \eqref{eq4}. Under Assumptions \ref{x0}-\ref{Q} with $\gamma>\max\{3-2H,\frac32\}$, it holds that
	\begin{align*}
	\sup_{n=0,\cdots,N}\left\| X^{K,M}_{t_n} -X^{K,M,N}_{t_{n}} \right\|_{L^2(\Omega;V)}\le C\Big(1+\|u_0\|_{\dot{V}^{2H+\gamma-1}}\Big)^2 N^{-1}.
	\end{align*}
\end{theorem}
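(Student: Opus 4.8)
The plan is to derive a recursion for the one-step error $e_n:=X^{K,M}_{t_n}-X^{K,M,N}_{t_n}$, unfold it with the discrete variation-of-constants formula, split each accumulated local error by a stochastic Taylor expansion of the drift, estimate the resulting pieces, and close with a discrete Gronwall inequality. Comparing \eqref{eq4} with the mild form of \eqref{eq3} on each $[t_{k-1},t_k]$ and using $S^M_{(n-k)h}S^M_{t_k-s}=S^M_{t_n-s}$, one gets $e_0=0$ and
\begin{align*}
e_n=\int_{0}^{t_n}\!\big(S^M_{t_n-s}F(X^{K,M}_s)-S^M_{t_n-t_{k(s)-1}}F(X^{K,M,N}_{t_{k(s)-1}})\big)\,ds+\int_{0}^{t_n}\!\big(S^M_{t_n-s}-S^M_{t_n-t_{k(s)-1}}\big)\,dW^{{\bf Q},K}_s,
\end{align*}
where $k(s)$ is the index with $s\in[t_{k(s)-1},t_{k(s)})$. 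I would peel off the second (noise) term, call it $J_2$, and rewrite the drift integrand as $S^M_{t_n-s}\big(F(X^{K,M}_s)-F(X^{K,M}_{t_{k(s)-1}})\big)+\big(S^M_{t_n-s}-S^M_{t_n-t_{k(s)-1}}\big)F(X^{K,M}_{t_{k(s)-1}})+S^M_{t_n-t_{k(s)-1}}\big(F(X^{K,M}_{t_{k(s)-1}})-F(X^{K,M,N}_{t_{k(s)-1}})\big)$, the last summand contributing, via $\sup|f'|<\infty$ and $\|S^M_t\|_{\mathcal{L}(V)}\le C$, at most $Ch\sum_{k=1}^{n}\|e_{k-1}\|_{L^2(\Omega;V)}$.

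On the first summand I would run the stochastic Taylor expansion (the content of the auxiliary Lemma \ref{J1}): write $F(X^{K,M}_s)-F(X^{K,M}_{t_{k-1}})=F'(X^{K,M}_{t_{k-1}})(X^{K,M}_s-X^{K,M}_{t_{k-1}})+R_s$ with $\|R_s\|_{\dot{V}^{-\frac12-\epsilon}}\le C\|X^{K,M}_s-X^{K,M}_{t_{k-1}}\|_V^2$ (from $\sup|f''|<\infty$ and $L^1(0,1)\hookrightarrow\dot{V}^{-\frac12-\epsilon}$), and then replace $X^{K,M}_s-X^{K,M}_{t_{k-1}}$ by its mild increment $-({\rm Id}_V-S^M_{s-t_{k-1}})X^{K,M}_{t_{k-1}}+\int_{t_{k-1}}^sS^M_{s-r}F(X^{K,M}_r)\,dr+\int_{t_{k-1}}^sS^M_{s-r}\,dW^{{\bf Q},K}_r$. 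This produces four accumulated terms: $J_{11}$ (from the semigroup defect $(S^M_{t_n-s}-S^M_{t_n-t_{k-1}})F(X^{K,M}_{t_{k-1}})$ together with $F'(X^{K,M}_{t_{k-1}})({\rm Id}_V-S^M_{s-t_{k-1}})X^{K,M}_{t_{k-1}}$), $J_{12}$ (the double drift integral), $J_{13}$ (from $F'(X^{K,M}_{t_{k-1}})\int_{t_{k-1}}^sS^M_{s-r}\,dW^{{\bf Q},K}_r$), and $J_{14}$ (from $R_s$). The routine terms $J_{11},J_{12},J_2$ I would bound with Lemma \ref{lm-A}, the a priori estimate $\sup_t\|X^{K,M}_t\|_{L^2(\Omega;\dot{V}^{2H+\gamma-1})}\le C(1+\|u_0\|_{\dot{V}^{2H+\gamma-1}})$ (from the argument of Theorem \ref{tm-regu}) and Lemma \ref{lm-ito}, using $2H+\gamma-1>2$ to extract a full factor $s-t_{k-1}$ from each $({\rm Id}_V-S^M_{s-t_{k-1}})$; for $J_2$ the extra input is Lemma \ref{lm-ito} and finiteness of $\int_{0}^{t_n}\!\int_{0}^{t_n}(t_n-u)^{-\theta}(t_n-v)^{-\theta}\phi(u,v)\,du\,dv$, which holds because $\gamma>3-2H$ forces the smoothing exponent $\theta=\max\{0,\tfrac{3-\gamma}{2}\}$ to be $<H$; all three are then $O(N^{-1})$.

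The main obstacle is $J_{13}$: a term-by-term bound is too crude, since by Lemma \ref{lm-phi} each $\int_{t_{k-1}}^sS^M_{s-r}\,dW^{{\bf Q},K}_r$ has $L^2(\Omega;V)$-norm of order $(s-t_{k-1})^H$, so adding $N$ such local errors yields only $O(h^H)$ with $H<1$. Instead I would, after a Fubini argument in $s$ and $r$, view $J_{13}=\int_{0}^{t_n}\Xi_r\,dW^{{\bf Q},K}_r$ as a single $V$-valued Skorohod integral with $\Xi_r=\int_r^{t_{k(r)}}S^M_{t_n-s}F'(X^{K,M}_{t_{k(r)-1}})S^M_{s-r}\,ds$, and estimate $\mathbb{E}\|J_{13}\|_V^2$ via Lemma \ref{Lp}, applied componentwise and summed. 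The "deterministic'' part $\|\mathbb{E}[\Xi]\|_{|\mathcal{H}|}^2$ is $O(h^2)$ because $\|\mathbb{E}[\Xi_r]\|_{\mathcal{L}^0_2}\le C(t_{k(r)}-r)\|A^{\frac{\gamma-1}{2}}\|_{\mathcal{L}^0_2}\le Ch\|A^{\frac{\gamma-1}{2}}\|_{\mathcal{L}^0_2}$ (using $\gamma>1$ so $\dot{V}^{\gamma-1}\hookrightarrow V$) and $\int_{0}^{t_n}\!\int_{0}^{t_n}\phi(u,v)\,du\,dv\le C$; the "Malliavin'' part $\mathbb{E}\big[\|D\Xi\|^2_{|\mathcal{H}|\otimes|\mathcal{H}|}\big]$ uses the chain rule $DF'(X^{K,M}_{t_{k-1}})=F''(X^{K,M}_{t_{k-1}})DX^{K,M}_{t_{k-1}}$, the Malliavin-derivative bounds of Lemma \ref{lm-esti-M} (with $\gamma>\tfrac32$ keeping the products $f''(X^{K,M}_{t_{k-1}})\,(D_\rho X^{K,M}_{t_{k-1}})\,S^M_{s-r}{\bf Q}^{\frac12}f_j$ under control through the smoothing of the Laplacian), and the integration-by-parts identity of Lemma \ref{lm-EIBP} to handle the cross terms between different subintervals; summing over \emph{all} pairs of subintervals, rather than just the diagonal, restores the factor $h^{1-H}$, so this part is $O(h^2)$ as well and $\|J_{13}\|_{L^2(\Omega;V)}=O(N^{-1})$. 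I expect the bookkeeping in this step — carrying the non-commuting multiplication operators $F'(X^{K,M}_{t_{k-1}})$ through the Skorohod-integral computation — to be the technical heart of the proof.

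Finally, for $J_{14}=\int_{0}^{t_n}S^M_{t_n-s}R_s\,ds$ I would use $\|S^M_{t_n-s}R_s\|_V\le C(t_n-s)^{-\frac14-\frac\epsilon2}\|R_s\|_{\dot{V}^{-\frac12-\epsilon}}$ (smoothing of the Laplacian) together with $\|R_s\|_{L^2(\Omega;\dot{V}^{-\frac12-\epsilon})}\le C\|X^{K,M}_s-X^{K,M}_{t_{k-1}}\|_{L^4(\Omega;V)}^2\le C(1+\|u_0\|_{\dot{V}^{2H+\gamma-1}})^2(s-t_{k-1})$ from Theorem \ref{tm-regu4}, whence $\|J_{14}\|_{L^2(\Omega;V)}\le C(1+\|u_0\|_{\dot{V}^{2H+\gamma-1}})^2 h\int_{0}^{t_n}(t_n-s)^{-\frac14-\frac\epsilon2}\,ds=O(N^{-1})$; this is where the quadratic dependence on $\|u_0\|_{\dot{V}^{2H+\gamma-1}}$ in the statement comes from. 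Collecting the bounds for $J_{11},J_{12},J_{13},J_{14},J_2$ yields $\|e_n\|_{L^2(\Omega;V)}\le C(1+\|u_0\|_{\dot{V}^{2H+\gamma-1}})^2N^{-1}+Ch\sum_{k=1}^{n}\|e_{k-1}\|_{L^2(\Omega;V)}$, and the discrete Gronwall inequality gives the claim.
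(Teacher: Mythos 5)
Your overall strategy is the paper's: the same error decomposition, the same second-order Taylor expansion of $F$ producing four local-error contributions, the identification of the stochastic-integral term ($J_{13}$) as the crux to be handled by summing the accumulated errors \emph{before} taking expectations via Malliavin calculus, the use of the $L^4(\Omega;V)$ temporal H\"older-$\tfrac12$ regularity (Theorem \ref{tm-regu4}) for the quadratic remainder, and a discrete Gronwall closure. Your treatment of $J_{13}$ --- stochastic Fubini to recombine into a single Skorohod integral and then the Meyer-type inequality of Lemma \ref{Lp} --- is a legitimate variant of the paper's route, which instead expands $\mathbb{E}\|J_{13}\|_V^2$ in the eigenbasis and applies the pairwise identity of Lemma \ref{lm-EIBP}; both hinge on the same Malliavin-derivative bounds of Lemma \ref{lm-esti-M} and on the kernel estimates \eqref{J13_1}--\eqref{J13_2}. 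However, your version as written omits the Skorohod correction: since $F'(X^{K,M}_{t_{k-1}})$ is random, $F'(X^{K,M}_{t_{k-1}})\int_{t_{k-1}}^{s}S^M_{s-\sigma}\,dW^{{\bf Q},K}_\sigma$ is \emph{not} the divergence of $F'(X^{K,M}_{t_{k-1}})S^M_{s-\cdot}{\bf Q}^{\frac12}f_j\mathds{1}_{[t_{k-1},s]}$; the integration-by-parts formula produces an extra trace term $\langle D[F'(X^{K,M}_{t_{k-1}})],\,S^M_{s-\cdot}{\bf Q}^{\frac12}f_j\mathds{1}_{[t_{k-1},s]}\rangle_{\mathcal H}$, which must be carried along (it is harmless --- Lemma \ref{lm-esti-M} plus $\int_0^T\int_{t_{k-1}}^{s}\phi(\sigma,v)\,d\sigma\,dv\le Ch$ give an $O(h)$ contribution --- but it cannot simply be dropped).

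The second, more substantive soft spot is your dismissal of the semigroup-defect-on-drift term $\int_0^{t_n}(S^M_{t_n-s}-S^M_{t_n-\lfloor s\rfloor})F(X^{K,M}_{\lfloor s\rfloor})\,ds$ as routine by ``extracting a full factor $s-\lfloor s\rfloor$ from $({\rm Id}_V-S^M_{s-\lfloor s\rfloor})$''. That mechanism costs either two derivatives of $F(X^{K,M}_{\lfloor s\rfloor})$ --- not available, since $F$ is a Nemytskii operator and $f(X)$ does not satisfy the Dirichlet boundary condition needed for membership in $\dot V^{2}$ (nothing forces $f(0)=0$) --- or the smoothing factor $\|AS^M_{t_n-s}\|_{\mathcal L(V)}\le C(t_n-s)^{-1}$, whose time integral diverges. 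This is precisely why the paper splits this term into $J_{21}+J_{22}$: $J_{21}$ pivots on $F(X^{K,M}_{t_{n-1}})$ and uses the temporal H\"older continuity to gain the integrable weight $(t_n-s)^{-1+H}$, while $J_{22}$ integrates $e^{-\lambda_j(t_n-s)}-e^{-\lambda_j(t_n-\lfloor s\rfloor)}$ in $s$ \emph{before} summing over the eigenmodes (Parseval), obtaining a bound $Ch$ uniform in $j$ with no spatial regularity required of $F(X^{K,M}_{t_{n-1}})$ beyond $V$. You would need either this argument or an explicit fractional-regularity substitute (e.g.\ $F(X^{K,M}_{\lfloor s\rfloor})\in\dot V^{r}$ for some $0<r<\tfrac12$, paying only $(t_n-s)^{-1+r/2}$); as stated, your bound for this piece does not close.
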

\begin{proof}
	We rewrite scheme \eqref{eq4} as
\begin{align*}
X^{K,M,N}_{t_{n}}
=S^M_{t_n}P_Mu_0+\sum_{i=0}^{n-1}\int_{t_i}^{t_{i+1}}S^M_{t_n-t_i}F(X^{K,M,N}_{t_{i}}) ds
+\sum_{i=0}^{n-1}\int_{t_i}^{t_{i+1}}S^M_{t_n-t_i}d W^{{\bf Q},K}_s.
\end{align*}
Introducing the notation $\lfloor t \rfloor:=\max\{t_n:t_n\le t,n=0,\cdots,N\}$, we decompose the error into four parts 
\begin{align*}
&\left\| X^{K,M}_{t_n} -X^{K,M,N}_{t_{n}} \right\|_{L^2(\Omega;V)}\\
\le &	\left\| \int_{0}^{t_n}\Big( S^M_{t_n-s}F(X^{K,M}_s) - S^M_{t_n-\lfloor s \rfloor }F(X^{K,M,N}_{\lfloor s \rfloor}) \Big)ds \right\|_{L^2(\Omega;V)}\\
&+\left\| \int_{0}^{t_n} \big(S^M_{t_n-s}- S^M_{t_n-\lfloor s \rfloor} \big)dW^{{\bf Q},K}_s\right\|_{L^2(\Omega;V)}\\
\le &	\left\| \int_{0}^{t_n} S^M_{t_n-s}\Big(F(X^{K,M}_s)- F(X^{K,M}_{\lfloor s \rfloor}) \Big)ds \right\|_{L^2(\Omega;V)}\\
 &	+\left\| \int_{0}^{t_n}\big( S^M_{t_n-s}-S^M_{t_n-\lfloor s \rfloor}\big)F(X^{K,M}_{\lfloor s \rfloor}) ds \right\|_{L^2(\Omega;V)}\\
  &	+\left\| \int_{0}^{t_n} S^M_{t_n-\lfloor s \rfloor}\Big(F(X^{K,M}_{\lfloor s \rfloor}) -F(X^{K,M,N}_{\lfloor s \rfloor}) \Big)ds \right\|_{L^2(\Omega;V)}\\
&+\left\| \int_{0}^{t_n} \big(S^M_{t_n-s}- S^M_{t_n-\lfloor s \rfloor} \big)dW^{{\bf Q},K}_s\right\|_{L^2(\Omega;V)}\\
=&:J_1+J_2+J_3+J_4.
\end{align*}

The estimate for $J_1$ is postponed to Lemma \ref{J1} which gives
	\begin{align*}
J_1\le C\Big(1+\|u_0\|_{\dot{V}^{2H+\gamma-1}}\Big) h.
\end{align*}

For the second part $J_2$, note that
\begin{align*}
J_2\le &
\left\| \int_{0}^{t_n}\big( S^M_{t_n-s}-S^M_{t_n-\lfloor s \rfloor} \big)\Big(F(X^{K,M}_{t_{n-1}})-F(X^{K,M}_{\lfloor s \rfloor}\Big) ds \right\|_{L^2(\Omega;V)}\\
&+\left\| \int_{0}^{t_n}\big( S^M_{t_n-s}-S^M_{t_n-\lfloor s \rfloor} \big)F(X^{K,M}_{t_{n-1}}) ds \right\|_{L^2(\Omega;V)}\\
=&:J_{21}+J_{22}.
\end{align*}
Using the temporal regularity  shown in Theorem \ref{tm-regu}, we get
\begin{align*}
J_{21}&\le C\int_{0}^{t_n}|t_n-s|^{-1} |s-\lfloor s \rfloor|\left\|F(X^{K,M}_{t_{n-1}}) - F(X^{K,M}_{\lfloor s \rfloor}) \right\|_{L^2(\Omega;V)}ds\\
&\le C\Big(1+\|u_0\|_{\dot{V}^{2}}\Big) \int_{0}^{t_n}|t_n-s|^{-1} |s-\lfloor s \rfloor| |t_n-s|^Hds\\
&\le  C\Big(1+\|u_0\|_{\dot{V}^{2}}\Big) h.
\end{align*}
Together with 
\begin{align*}
  \int_{0}^{t_n}  e^{-\lambda_j(t_n-s)}-e^{-\lambda_j(t_n-\lfloor s \rfloor)} ds
\le & \lambda_j h  \int_{0}^{t_n} e^{-\lambda_j(t_n-s)}ds\le Ch,
\end{align*}
the Parseval equality leads to 
\begin{align*}
&\left\| \int_{0}^{t_n}\big( S^M_{t_n-s}-S^M_{t_n-\lfloor s \rfloor} \big)F(X^{K,M}_{t_{n-1}}) ds \right\|_V^2\\
=& \sum_{j=1}^{M} \langle   \int_{0}^{t_n}\big( S^M_{t_n-s}-S^M_{t_n-\lfloor s \rfloor} \big)F(X^{K,M}_{t_{n-1}}) ds ,e_j  \rangle _V^2\\
=& \sum_{j=1}^{M}\left(  \int_{0}^{t_n} \langle \big( S^M_{t_n-s}-S^M_{t_n-\lfloor s \rfloor} \big)F(X^{K,M}_{t_{n-1}}) ,e_j  \rangle _Vds \right)^2\\
\le & \sum_{j=1}^{\infty}\left(  \int_{0}^{t_n}  e^{-\lambda_j(t_n-s)}-e^{-\lambda_j(t_n-\lfloor s \rfloor)} ds \right)^2\langle F(X^{K,M}_{t_{n-1}}) ,e_j  \rangle _V^2\\
\le &Ch^2\left\|F(X^{K,M}_{t_{n-1}})\right\|_V^2.
\end{align*}
Then we obtain
\begin{align*}
J_{22}\le C\Big(1+\|u_0\|_{\dot{V}^{2}}\Big) h.
\end{align*}

For the third part $J_3$, it holds that
\begin{align*}
J_3\le Ch\sum_{i=0}^{n-1}\left\| X^{K,M}_{t_i}-X^{K,M,N}_{t_i}  \right\|_{L^2(\Omega;V)}.
\end{align*}

In order to deal with $J_4$, we deduce from Lemma \ref{lm-ito} and \cite[Lemma 4.8]{wang17BIT} that
\begin{align*}
J_4^2=&\sum_{i=1}^{K}\int_{0}^{t_n}\int_{0}^{t_n}\langle \big(S^M_{t_n-u}- S^M_{t_n-\lfloor u \rfloor} \big){\bf Q}^{\frac12}f_{i},\big(S^M_{t_n-v}- S^M_{t_n-\lfloor v \rfloor} \big){\bf Q}^{\frac12}f_{i}\rangle_V\phi(u,v)dudv\\
\le & C h^{2H+\gamma-1}\left\|A^{\frac{\gamma-1}{2}}\right\|^2_{\mathcal{L}^0_2}.
\end{align*}

Gathering the above estimates and using $h=\frac{T}{N}$, we conclude
	\begin{align*}
\sup_{n=0,\cdots,N}\left\| X^{K,M}_{t_n} -X^{K,M,N}_{t_{n}} \right\|_{L^2(\Omega;V)}\le C\Big(1+\|u_0\|_{\dot{V}^{2H+\gamma-1}}\Big) N^{-1}.
\end{align*}

\end{proof}

\begin{lemma}\label{J1}
	Let the assumptions be satisfied as in Theorem \ref{lm-tem}, then
	\begin{align*}
	J_1=\left\| \int_{0}^{t_n} S^M_{t_n-s}\Big(F(X^{K,M}_s)- F(X^{K,M}_{\lfloor s \rfloor}) \Big)ds \right\|_{L^2(\Omega;V)}\le C\Big(1+\|u_0\|_{\dot{V}^{2H+\gamma-1}}\Big) h.
	\end{align*}
\end{lemma}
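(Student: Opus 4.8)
The plan is to apply the Malliavin integration-by-parts machinery to $J_1$ rather than estimating the local error $F(X^{K,M}_s)-F(X^{K,M}_{\lfloor s\rfloor})$ directly, since a naive bound only gives $|s-\lfloor s\rfloor|^H=\mathcal{O}(h^H)$ per step, and the sum over $N$ steps would lose a factor. First I would write $F(X^{K,M}_s)-F(X^{K,M}_{\lfloor s\rfloor})$ using the mild formulation of $X^{K,M}$: by a first-order Taylor expansion of $F$,
\begin{align*}
F(X^{K,M}_s)-F(X^{K,M}_{\lfloor s\rfloor})=\int_0^1 F'\big(X^{K,M}_{\lfloor s\rfloor}+\theta(X^{K,M}_s-X^{K,M}_{\lfloor s\rfloor})\big)\,d\theta\cdot\big(X^{K,M}_s-X^{K,M}_{\lfloor s\rfloor}\big),
\end{align*}
and then split the increment $X^{K,M}_s-X^{K,M}_{\lfloor s\rfloor}$ into the semigroup part, the drift integral part, and the stochastic integral part $\int_{\lfloor s\rfloor}^s S^M_{s-\sigma}dW^{{\bf Q},K}_\sigma$. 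The first two contributions to $J_1$ are handled by the deterministic smoothing estimates of Lemma \ref{lm-A} together with the $L^4$-temporal regularity of Theorem \ref{tm-regu4}: the semigroup-difference piece gains a factor $|s-\lfloor s\rfloor|^{(2H+\gamma-1)/2}$ against the negative-power singularity $|t_n-s|^{-(2H+\gamma-1)/2}$ which is integrable since $2H+\gamma-1<2$ fails — so more carefully I would use that $\|A^{\beta/2}S^M_{t_n-s}\|_{\mathcal L(V)}\le C|t_n-s|^{-\beta/2}$ with a small $\beta$ and absorb the rest into the H\"older regularity, and the drift piece is $\mathcal{O}(h)$ trivially by boundedness of $F$.

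The heart of the proof is the stochastic piece
\begin{align*}
J_{1,\mathrm{stoch}}=\left\|\int_0^{t_n}S^M_{t_n-s}\,G_s\int_{\lfloor s\rfloor}^s S^M_{s-\sigma}\,dW^{{\bf Q},K}_\sigma\,ds\right\|_{L^2(\Omega;V)},
\end{align*}
where $G_s=\int_0^1 F'(\cdots)\,d\theta$ is a (random, adapted, Malliavin-differentiable) multiplication operator. Here I would \emph{not} pull the norm inside the $s$-integral; instead I would compute $J_{1,\mathrm{stoch}}^2$ by expanding the square, writing each inner stochastic integral as a Skorohod integral $\delta(\cdot)$ against $B^K$, and then apply Lemma \ref{lm-EIBP} to convert $\mathbb{E}[Y\,\delta(g_1)\delta(g_2)]$ into Malliavin-derivative terms plus the $\langle g_1,g_2\rangle_{\mathcal H}$ inner-product term. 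Crucially the inner-product term produces a factor $\phi(u,v)$ with both $u,v$ confined to an interval of length $h$, so the double integral over that square contributes $\int_{\lfloor s\rfloor}^s\int_{\lfloor s'\rfloor}^{s'}\phi\lesssim h^{2H}$ per pair of cells; but because the \emph{outer} integration variables $s,s'$ then decouple against the smoothing semigroup (after bounding the $\mathcal{L}^0_2$-type norms using Assumption \ref{Q} and Lemma \ref{lm-esti-M} for the Malliavin derivatives of $X^{K,M}$), one recovers an extra power and the net bound on $J_{1,\mathrm{stoch}}$ is $\mathcal{O}(h)$, not $\mathcal{O}(h^H)$. The Malliavin-derivative terms generated by Lemma \ref{lm-EIBP} are estimated the same way using the bounds $\|\varPsi^{i_1}_{t,r_1}\|_{\dot V^r}\le C\|{\bf Q}^{1/2}f_{i_1}\|_{\dot V^r}$ and the second-derivative bound, together with Assumption \ref{Q} (which forces $\gamma>3-2H$) to sum over $i$, and it is here that the hypothesis $\gamma>\frac32$ is also used to keep the relevant powers of $\lambda_i$ summable.

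I expect the main obstacle to be the bookkeeping in the Skorohod-integral expansion of $J_{1,\mathrm{stoch}}^2$: one must carefully track which factors of $\phi$ are localized to a single $h$-cell (giving the $h^{2H}$ gain) versus which range over $[0,t_n]$, ensure that differentiating $G_s$ and the inner integrand in the Malliavin direction does not destroy the adaptedness/domain conditions needed for Lemma \ref{lm-EIBP} and Lemma \ref{Lp}, and verify that after summing the $h$-cell contributions the remaining spatial sums converge under Assumptions \ref{x0}--\ref{Q} with $\gamma>\max\{3-2H,\frac32\}$. The smoothing estimate $\|A^\mu S^M_t\|_{\mathcal L(V)}\le Ct^{-\mu}$ of Lemma \ref{lm-A} is what lets us trade a small amount of spatial regularity of $S^M_{t_n-s}$ for integrability in $s$ near $s=t_n$, and combining this with the $h^{2H}$-per-cell gain and $2H>1$ is precisely what yields the full order-one rate.
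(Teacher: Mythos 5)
Your strategy for the hard term coincides with the paper's: do not pull the norm inside the $ds$-integral, expand $\|\cdot\|_{L^2(\Omega;V)}^2$ as a double integral in $s_1,s_2$, write the inner increments as Skorohod integrals, apply Lemma \ref{lm-EIBP}, and exploit that each $\sigma$-integration is confined to a cell of length $h$ together with the local integrability of $\phi$ (this is exactly where $2H>1$ enters) and the Malliavin-derivative bounds of Lemma \ref{lm-esti-M}; the paper's estimates \eqref{J13_1}--\eqref{J13_2} are the precise form of your ``per-cell gain'' heuristic, and the $\gamma>\tfrac32$ hypothesis is used, as you say, to sum the resulting $\dot V^{r}$-norms of ${\bf Q}^{1/2}f_j$ with $r>\tfrac12$. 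The one genuine structural difference is the Taylor expansion: the paper expands to \emph{second} order around $X^{K,M}_{\lfloor s\rfloor}$, so the coefficient in front of the Skorohod integral is $F'(X^{K,M}_{\lfloor s\rfloor})$ (depending on the solution only at the grid point), and the quadratic remainder $J_{14}$ is killed by the $L^4(\Omega;V)$ temporal H\"older regularity of Theorem \ref{tm-regu4} (exponent $\tfrac12$, squared gives $h$); your mean-value form puts $G_s=\int_0^1F'(\cdot)\,d\theta$, which depends on $X^{K,M}_s$ as well, in front of the stochastic integral. That variant is workable, since $DG_s$ and $D^2G_s$ are still controlled by Lemma \ref{lm-esti-M} and the boundedness of $f''$, $f'''$, and it eliminates the quadratic remainder (so Theorem \ref{tm-regu4} is not actually needed in your route), but it makes the integration-by-parts bookkeeping noticeably heavier. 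Finally, your treatment of the semigroup-difference piece is more complicated than necessary and, as written, shaky: since $\gamma>3-2H$ gives $2H+\gamma-1>2$, one simply bounds $\big\|(S^M_{s-\lfloor s\rfloor}-{\rm Id}_V)X^{K,M}_{\lfloor s\rfloor}\big\|_V=\big\|A^{-1}(S^M_{s-\lfloor s\rfloor}-{\rm Id}_V)AX^{K,M}_{\lfloor s\rfloor}\big\|_V\le Ch\|X^{K,M}_{\lfloor s\rfloor}\|_{\dot V^{2}}$ and uses $\|X^{K,M}_{\lfloor s\rfloor}\|_{L^2(\Omega;\dot V^{2})}\le C$; no smoothing from $S^M_{t_n-s}$ is required, there is no singularity in $t_n-s$ to worry about, and one thereby avoids having to commute a fractional power of $A$ past the multiplication operator $G_s$, which your ``small $\beta$'' fix would require.
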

\begin{proof}
Applying the Taylor's expansion leads to
\begin{align*}
&F(X^{K,M}_s)- F(X^{K,M}_{\lfloor s \rfloor})\\
=&F'(X^{K,M}_{\lfloor s \rfloor})\Big(X^{K,M}_{s}-X^{K,M}_{\lfloor s \rfloor}\Big)\\
&+\int_{0}^{1}\int_{0}^{1}\theta F''((1-\tilde{\theta})  X^{K,M}_{\lfloor s \rfloor} +\tilde{\theta}(\theta X^{K,M}_{s}+(1-\theta)X^{K,M}_{\lfloor s \rfloor}))\Big(X^{K,M}_{s}-X^{K,M}_{\lfloor s \rfloor}\Big)^2d\tilde{\theta}d\theta\\
=&F'(X^{K,M}_{\lfloor s \rfloor})\big(S^M_{s-\lfloor s \rfloor}-{\rm Id}_V\big)X^{K,M}_{\lfloor s \rfloor}\\
&+F'(X^{K,M}_{\lfloor s \rfloor})\int_{\lfloor s \rfloor}^{s}S^M_{s-\sigma}F(X^{K,M}_{\sigma})d\sigma
+F'(X^{K,M}_{\lfloor s \rfloor})\int_{\lfloor s \rfloor}^{s}S^M_{s-\sigma}dW^{{\bf Q},K}_\sigma\\
&+\int_{0}^{1}\int_{0}^{1}\theta F''( (1-\tilde{\theta})X^{K,M}_{\lfloor s \rfloor} +\tilde{\theta} (\theta X^{K,M}_{s}+(1-\theta)X^{K,M}_{\lfloor s \rfloor}))\Big(X^{K,M}_{s}-X^{K,M}_{\lfloor s \rfloor}\Big)^2d\tilde{\theta}d\theta.
\end{align*}
Therefore,
\begin{align*}
J_1\le&\left\| \int_{0}^{t_n} S^M_{t_n-s}  F'(X^{K,M}_{\lfloor s \rfloor})\big(S^M_{s-\lfloor s \rfloor}-{\rm Id}_V\big)X^{K,M}_{\lfloor s \rfloor}  ds \right\|_{L^2(\Omega;V)}\\
&+\left\| \int_{0}^{t_n} S^M_{t_n-s} F'(X^{K,M}_{\lfloor s \rfloor})\int_{\lfloor s \rfloor}^{s}S^M_{s-\sigma}F(X^{K,M}_{\sigma})d\sigma ds \right\|_{L^2(\Omega;V)}\\
&+\left\| \int_{0}^{t_n} S^M_{t_n-s}  F'(X^{K,M}_{\lfloor s \rfloor})\int_{\lfloor s \rfloor}^{s}S^M_{s-\sigma}dW^{{\bf Q},K}_\sigma  ds \right\|_{L^2(\Omega;V)}\\
&+\Bigg\| \int_{0}^{t_n} S^M_{t_n-s}  \int_{0}^{1}\int_{0}^{1}\theta F''((1-\tilde{\theta} ) X^{K,M}_{\lfloor s \rfloor} +\tilde{\theta}(\theta X^{K,M}_{s}+(1-\theta)X^{K,M}_{\lfloor s \rfloor}))\\
&\quad\quad\times\Big(X^{K,M}_{s}-X^{K,M}_{\lfloor s \rfloor}\Big)^2d\tilde{\theta}d\theta  ds \Bigg\|_{L^2(\Omega;V)}\\
=&:J_{11}+J_{12}+J_{13}+J_{14}.
\end{align*}
In the following, we show the estimates for the four terms separately.

By Lemma \ref{lm-A}, we have
\begin{align*}
J_{11}&\le \int_{0}^{t_n} \left\|A^{-1}\big(S^M_{s-\lfloor s \rfloor}-{\rm Id}_V\big)AX^{K,M}_{\lfloor s \rfloor} \right\|_{L^2(\Omega;V)} ds \\
&\le C h\int_{0}^{t_n} \left\|X^{K,M}_{\lfloor s \rfloor} \right\|_{L^2(\Omega;\dot{V}^{2} )} ds \\
&\le C\Big(1+\|u_0\|_{\dot{V}^{2H+\gamma-1}}\Big) h.
\end{align*}

Since Assumption \ref{F} implies that $F$ is linear growth, we get 
\begin{align*}
J_{12}&\le C \int_{0}^{t_n} \int_{\lfloor s \rfloor}^{s}\left\|F(X^{K,M}_{\sigma})\right\|_{L^2(\Omega;V)}d\sigma ds \\
&\le C\Big(1+\|u_0\|_{\dot{V}^{2H+\gamma-1}}\Big) h.
\end{align*}

Based on Theorem \ref{tm-regu4} and arguments in Lemma \ref{lm-esti-M}, we take $\frac12<r<1$ and obtain
\begin{align*}
J_{14}
&\le C  \int_{0}^{t_n} |t_n-s|^{-\frac{r}{2}}\left\|X^{K,M}_{s}-X^{K,M}_{\lfloor s \rfloor}\right\|^2_{L^4(\Omega;V)}ds\\
&\le C \Big(1+\|u_0\|_{\dot{V}^{2H+\gamma-1}}\Big)^2 \int_{0}^{t_n}|t_n-s|^{-\frac{r}{2}} |s-\lfloor s \rfloor|ds\\
&\le C  \Big(1+\|u_0\|_{\dot{V}^{2H+\gamma-1}}\Big)^2h.
\end{align*}

It remains to consider the third term $J_{13}$ which can be rewritten as
\begin{align*}
J^2_{13}
&=\left\|\int_{0}^{t_{n}}S^M_{t_n-s}F'(X^{K,M}_{\lfloor s \rfloor})\int_{\lfloor s \rfloor}^{s}S^M_{s-\sigma}dW^{{\bf Q},K}_\sigma ds\right\|^2_{L^2{(\Omega;V)}}\\
&=\mathbb{E}\Bigg[\bigg\|\int_{0}^{t_{n}}S^M_{t_n-s}F'(X^{K,M}_{\lfloor s \rfloor})\int_{\lfloor s \rfloor}^{s}S^M_{s-\sigma}dW^{{\bf Q},K}_\sigma ds\bigg\|_V^2\Bigg].
\end{align*}
 The Parseval equality leads to 
\begin{align}
J^2_{13}
=&\mathbb{E}\Bigg[\langle \int_{0}^{t_{n}}S^M_{t_n-s_1}F'(X^{K,M}_{\lfloor s_1 \rfloor})\int_{\lfloor s_1 \rfloor}^{s_1}S^M_{s_1-\sigma_1}dW^{{\bf Q},K}_{\sigma_1} ds_1,\label{J13}\\
&\quad\int_{0}^{t_{n}}S^M_{t_n-s_2}F'(X^{K,M}_{\lfloor s_2 \rfloor})\int_{\lfloor s_2 \rfloor}^{s_2}S^M_{s_2-\sigma_2}dW^{{\bf Q},K}_{\sigma_2} ds_2\rangle_V\Bigg]\nonumber\\
=&\mathbb{E}\Bigg[\sum_{i=1}^{M}\langle \int_{0}^{t_{n}}S^M_{t_n-s_1}F'(X^{K,M}_{\lfloor s_1 \rfloor})\int_{\lfloor s_1 \rfloor}^{s_1}S^M_{s_1-\sigma_1}dW^{{\bf Q},K}_{\sigma_1} ds_1,e_i\rangle_V\nonumber\\
&\quad\times\langle \int_{0}^{t_{n}}S^M_{t_m-s_2}F'(X^{K,M}_{\lfloor s_2 \rfloor})\int_{\lfloor s_2 \rfloor}^{s_2}S^M_{s_2-\sigma_2}dW^{{\bf Q},K}_{\sigma_2} ds_2,e_i\rangle_V\Bigg]\nonumber\\
=&\mathbb{E}\Bigg[\sum_{i=1}^{M}\int_{0}^{t_{n}}\int_{0}^{t_{n}}\langle S^M_{t_n-s_1}F'(X^{K,M}_{\lfloor s_1 \rfloor})\int_{\lfloor s_1 \rfloor}^{s_1}S^M_{s_1-\sigma_1}dW^{{\bf Q},K}_{\sigma_1} ,e_i\rangle_V\nonumber\\
&\quad \times\langle S^M_{t_n-s_2}F'(X^{K,M}_{\lfloor s_2 \rfloor})\int_{\lfloor s_2 \rfloor}^{s_2}S^M_{s_2-\sigma_2}dW^{{\bf Q},K}_{\sigma_2} ,e_i\rangle_Vds_1ds_2\Bigg]\nonumber\\
=&\mathbb{E}\Bigg[\sum_{i=1}^{M}\int_{0}^{t_{n}}\int_{0}^{t_{n}}\langle F'(X^{K,M}_{\lfloor s_1 \rfloor})\int_{\lfloor s_1 \rfloor}^{s_1}S^M_{s_1-\sigma_1}dW^{{\bf Q},K}_{\sigma_1} , e^{-\lambda_i(t_n-s_1)}e_i\rangle_V\nonumber\\
&\quad \times\langle F'(X^{K,M}_{\lfloor s_2 \rfloor})\int_{\lfloor s_2 \rfloor}^{s_2}S^M_{s_2-\sigma_2}dW^{{\bf Q},K}_{\sigma_2} ,e^{-\lambda_i(t_n-s_2)}e_i\rangle_Vds_1ds_2\Bigg]\nonumber\\
=&\mathbb{E}\Bigg[\sum_{i=1}^{M}\int_{0}^{t_{n}}\int_{0}^{t_{n}}e^{-\lambda_i(t_n-s_1)}e^{-\lambda_i(t_n-s_2)}\langle F'(X^{K,M}_{\lfloor s_1 \rfloor})\int_{\lfloor s_1 \rfloor}^{s_1}S^M_{s_1-\sigma_1}dW^{{\bf Q},K}_{\sigma_1} , e_i\rangle_V\nonumber\\
&\quad\times\langle F'(X^{K,M}_{\lfloor s_2 \rfloor})\int_{\lfloor s_2 \rfloor}^{s_2}S^M_{s_2-\sigma_2}dW^{{\bf Q},K}_{\sigma_2} ,e_i\rangle_Vds_1ds_2\Bigg]\nonumber\\
=&\sum_{i=1}^{M}\int_{0}^{t_{n}}\int_{0}^{t_{n}}e^{-\lambda_i(t_n-s_1)}e^{-\lambda_i(t_n-s_2)}\sum_{j_1=1}^{K}\sum_{j_2=1}^{K}
\mathbb{E}\Bigg[\langle F'(X^{K,M}_{\lfloor s_1 \rfloor})\nonumber\\
&\times\int_{\lfloor s_1 \rfloor}^{s_1}S^M_{s_1-\sigma_1}{\bf Q}^\frac12 f_{j_1} d\beta^{j_1}_{\sigma_1} , e_i\rangle_V\langle F'(X^{K,M}_{\lfloor s_2 \rfloor})\int_{\lfloor s_2 \rfloor}^{s_2}S^M_{s_2-\sigma_2}{\bf Q}^\frac12 f_{j_2} d\beta^{j_2}_{\sigma_2} ,e_i\rangle_V\Bigg]ds_1ds_2.\nonumber
\end{align}
The definition of the inner product in terms of the Hilbert space $V$ and the stochastic Fubini theorem produce
\begin{align}
&\mathbb{E}\Bigg[
\langle F'(X^{K,M}_{\lfloor s_1 \rfloor})\int_{\lfloor s_1 \rfloor}^{s_1}S^M_{s_1-\sigma_1}{\bf Q}^\frac12 f_{j_1} d\beta^{j_1}_{\sigma_1} , e_i\rangle_V\langle F'(X^{K,M}_{\lfloor s_2 \rfloor})\int_{\lfloor s_2 \rfloor}^{s_2}S^M_{s_2-\sigma_2}{\bf Q}^\frac12 f_{j_2} d\beta^{j_2}_{\sigma_2} ,e_i\rangle_V\Bigg]\label{J13-2}\\
=&\mathbb{E}\Bigg[
\int_{0}^{1}f'(X^{K,M}_{\lfloor s_1 \rfloor}(x_1))\bigg(\int_{\lfloor s_1 \rfloor}^{s_1}S^M_{s_1-\sigma_1}{\bf Q}^\frac12 f_{j_1}d\beta^{j_1}_{\sigma_1}\bigg)(x_1) e_i(x_1) dx_1\nonumber\\
&\quad\times\int_{0}^{1}f'(X^{K,M}_{\lfloor s_2\rfloor}(x_2))\bigg(\int_{\lfloor s_2 \rfloor}^{s_2}S^M_{s_2-\sigma_2}{\bf Q}^\frac12 f_{j_2}d\beta^{j_2}_{\sigma_2}\bigg)(x_2) e_i(x_2) dx_2\Bigg]\nonumber\\
=&\mathbb{E}\Bigg[
\int_{0}^{1}f'(X^{K,M}_{\lfloor s_1 \rfloor}(x_1))\sum_{l_1=1}^{\infty}\bigg(\int_{\lfloor s_1 \rfloor}^{s_1}\langle S^M_{s_1-\sigma_1}{\bf Q}^\frac12 f_{j_1},e_{l_1}\rangle_Vd\beta^{j_1}_{\sigma_1}\bigg)e_{l_1}(x_1) e_i(x_1) dx_1\nonumber\\
&\quad\times\int_{0}^{1}f'(X^{K,M}_{\lfloor s_2\rfloor}(x_2))\sum_{l_2=1}^{\infty}\bigg(\int_{\lfloor s_2 \rfloor}^{s_2}\langle S^M_{s_2-\sigma_2}{\bf Q}^\frac12 f_{j_2},e_{l_2}\rangle_Vd\beta^{j_2}_{\sigma_2}\bigg)e_{l_2}(x_2) e_i(x_2) dx_2\Bigg]\nonumber\\
=&
\int_{0}^{1}\int_{0}^{1}\sum_{l_1=1}^{\infty}\sum_{l_2=1}^{\infty}\mathbb{E}\Bigg[f'(X^{K,M}_{\lfloor s_1 \rfloor}(x_1))\bigg(\int_{\lfloor s_1 \rfloor}^{s_1}\langle S^M_{s_1-\sigma_1}{\bf Q}^\frac12 f_{j_1},e_{l_1}\rangle_Vd\beta^{j_1}_{\sigma_1}\bigg)\nonumber\\
&\quad \times f'(X^{K,M}_{\lfloor s_2\rfloor}(x_2))\bigg(\int_{\lfloor s_2 \rfloor}^{s_2}\langle S^M_{s_2-\sigma_2}{\bf Q}^\frac12 f_{j_2},e_{l_2}\rangle_Vd\beta^{j_2}_{\sigma_2}\bigg)\Bigg]e_{l_1}(x_1) e_i(x_1) e_{l_2}(x_2) e_i(x_2) dx_1dx_2.\nonumber
\end{align}
According to Lemma \ref{lm-EIBP}, we have
\begin{align}
&\mathbb{E}\Bigg[f'(X^{K,M}_{\lfloor s_1 \rfloor}(x_1))\bigg(\int_{\lfloor s_1 \rfloor}^{s_1}\langle S^M_{s_1-\sigma_1}{\bf Q}^\frac12 f_{j_1},e_{l_1}\rangle_Vd\beta^{j_1}_{\sigma_1}\bigg)\label{J13-3}\\
&\quad \times f'(X^{K,M}_{\lfloor s_2\rfloor}(x_2))\bigg(\int_{\lfloor s_2 \rfloor}^{s_2}\langle S^M_{s_2-\sigma_2}{\bf Q}^\frac12 f_{j_2},e_{l_2}\rangle_Vd\beta^{j_2}_{\sigma_2}\bigg)\Bigg]\nonumber\\
=&\mathbb{E}\Bigg[\int_{0}^{T}\int_{\lfloor s_1 \rfloor}^{s_1}\int_{0}^{T}\int_{\lfloor s_2 \rfloor}^{s_2}\Big(D_v\Big(D_u\Big[f'(X^{K,M}_{\lfloor s_1 \rfloor}(x_1))f'(X^{K,M}_{\lfloor s_2\rfloor}(x_2)\Big]\Big)^{j_2}\Big)^{j_1}\nonumber\\
&\quad \times\langle S^M_{s_2-\sigma_2}{\bf Q}^\frac12 f_{j_2},e_{l_2}\rangle_V\phi(\sigma_2,u)d\sigma_2du
\langle S^M_{s_1-\sigma_1}{\bf Q}^\frac12 f_{j_1},e_{l_1}\rangle_V\phi(\sigma_1,v)d\sigma_1dv\Bigg]\nonumber\\
&+\mathbb{E}\Bigg[f'(X^{K,M}_{\lfloor s_1 \rfloor}(x_1))f'(X^{K,M}_{\lfloor s_2\rfloor}(x_2))\nonumber\\
&\quad \times\bigg(\int_{\lfloor s_1 \rfloor}^{s_1}
\int_{\lfloor s_2 \rfloor}^{s_2}\langle S^M_{s_1-\sigma_1}{\bf Q}^\frac12 f_{j_1},e_{l_1}\rangle_V \langle S^M_{s_2-\sigma_2}{\bf Q}^\frac12 f_{j_2},e_{l_2}\rangle_V \phi(\sigma_1,\sigma_2)d\sigma_2d\sigma_1\bigg)\Bigg]{\mathds{1}}_{j_1=j_2}.\nonumber
\end{align}
Substituting \eqref{J13-2}-\eqref{J13-3} into \eqref{J13}, we obtain 
\begin{align*}
J_{13}^2
=&\sum_{i=1}^{M}\int_{0}^{t_{n}}\int_{0}^{t_{n}}e^{-\lambda_i(t_n-s_1)}e^{-\lambda_i(t_n-s_2)}
\sum_{j_1=1}^{K}\sum_{j_2=1}^{K}
\int_{0}^{1}\int_{0}^{1}\sum_{l_1=1}^{\infty}\sum_{l_2=1}^{\infty}\nonumber\\
&\mathbb{E}\Bigg[f'(X^{K,M}_{\lfloor s_1 \rfloor}(x_1))f'(X^{K,M}_{\lfloor s_2\rfloor}(x_2))\bigg(\int_{\lfloor s_1 \rfloor}^{s_1}\langle S^M_{s_1-\sigma_1}{\bf Q}^\frac12 f_{j_1},e_{l_1}\rangle_Vd\beta^{j_1}_{\sigma_1}\bigg)\nonumber\\
&\quad \times\bigg(\int_{\lfloor s_2 \rfloor}^{s_2}\langle S^M_{s_2-\sigma_2}{\bf Q}^\frac12 f_{j_2},e_{l_2}\rangle_Vd\beta^{j_2}_{\sigma_2}\bigg)\Bigg] 
e_{l_1}(x_1) e_i(x_1) e_{l_2}(x_2) e_i(x_2) dx_1dx_2ds_1ds_2\\\nonumber
=&\sum_{i=1}^{M}\int_{0}^{t_{n}}\int_{0}^{t_{n}}e^{-\lambda_i(t_n-s_1)}e^{-\lambda_i(t_n-s_2)}\sum_{j_1=1}^{K}\sum_{j_2=1}^{K}\int_{0}^{1}\int_{0}^{1}\sum_{l_1=1}^{\infty}\sum_{l_2=1}^{\infty}\nonumber\\
&\mathbb{E}\Bigg[\int_{0}^{T}\int_{\lfloor s_1 \rfloor}^{s_1}\int_{0}^{T}\int_{\lfloor s_2 \rfloor}^{s_2}\Big(D_v\Big(D_u\Big[f'(X^{K,M}_{\lfloor s_1 \rfloor}(x_1))f'(X^{K,M}_{\lfloor s_2\rfloor}(x_2)\Big]\Big)^{j_2}\Big)^{j_1}\nonumber\\
&\quad \times\langle S^M_{s_2-\sigma_2}{\bf Q}^\frac12 f_{j_2},e_{l_2}\rangle_V\langle S^M_{s_1-\sigma_1}{\bf Q}^\frac12 f_{j_1},e_{l_1}\rangle_V\phi(\sigma_2,u)\phi(\sigma_1,v)d\sigma_2dud\sigma_1dv\Bigg]\nonumber\\
&\quad\times e_{l_1}(x_1) e_i(x_1) e_{l_2}(x_2) e_i(x_2) dx_1dx_2ds_1ds_2\\\nonumber
&+\sum_{i=1}^{M}\int_{0}^{t_{n}}\int_{0}^{t_{n}}e^{-\lambda_i(t_n-s_1)}e^{-\lambda_i(t_n-s_2)}\sum_{j=1}^{K}\int_{0}^{1}\int_{0}^{1}\sum_{l_1=1}^{\infty}\sum_{l_2=1}^{\infty}\nonumber\\
&\mathbb{E}\Bigg[f'(X^{K,M}_{\lfloor s_1 \rfloor}(x_1))f'(X^{K,M}_{\lfloor s_2\rfloor}(x_2))\bigg(\int_{\lfloor s_1 \rfloor}^{s_1}
\int_{\lfloor s_2 \rfloor}^{s_2}\langle S^M_{s_1-\sigma_1}{\bf Q}^\frac12 f_{j},e_{l_1}\rangle_V\nonumber\\
&\quad \times\langle S^M_{s_2-\sigma_2}{\bf Q}^\frac12 f_{j},e_{l_2}\rangle_V \phi(\sigma_1,\sigma_2)d\sigma_2d\sigma_1\bigg)\Bigg]e_{l_1}(x_1) e_i(x_1) e_{l_2}(x_2) e_i(x_2) dx_1dx_2ds_1ds_2\\
=&:I_1+I_2.
\end{align*}
Noting
\begin{align*}
I_2=&\sum_{i=1}^{M}\int_{0}^{t_{n}}\int_{0}^{t_{n}}e^{-\lambda_i(t_n-s_1)}e^{-\lambda_i(t_n-s_2)}\sum_{j=1}^{K}\int_{\lfloor s_1 \rfloor}^{s_1}
\int_{\lfloor s_2 \rfloor}^{s_2}\mathbb{E}\Bigg[\langle F'(X^{K,M}_{\lfloor s_1 \rfloor})S^M_{s_1-\sigma_1}{\bf Q}^\frac12 f_{j} ,e_i\rangle_V \\
&\quad\times F'(X^{K,M}_{\lfloor s_2\rfloor})S^M_{s_2-\sigma_2}{\bf Q}^\frac12 f_{j},e_i\rangle_V\Bigg] \phi(\sigma_1,\sigma_2)d\sigma_2d\sigma_1ds_1ds_2,
\end{align*}
we have
\begin{align*}
|I_2|\le C&\sum_{i=1}^{M}\int_{0}^{t_{n}}\int_{0}^{t_{n}}e^{-\lambda_i(t_n-s_1)}e^{-\lambda_i(t_n-s_2)}\sum_{j=1}^{K}\int_{\lfloor s_1 \rfloor}^{s_1}
\int_{\lfloor s_2 \rfloor}^{s_2}\big\|{\bf Q}^\frac12 f_{j}\big\|^2_V \phi(\sigma_1,\sigma_2)d\sigma_2d\sigma_1ds_1ds_2\\
\le C&\|A^{\frac{\gamma-1}{2}}\|^2_{\mathcal{L}_2^0}\sum_{i=1}^{\infty}\int_{0}^{t_{n}}\int_{0}^{t_{n}}e^{-\lambda_i(t_n-s_1)}e^{-\lambda_i(t_n-s_2)}\int_{\lfloor s_1 \rfloor}^{s_1}
\int_{\lfloor s_2 \rfloor}^{s_2} \phi(\sigma_1,\sigma_2)d\sigma_2d\sigma_1ds_1ds_2.
\end{align*}
Based on the chain rule of the Malliavin derivative, we get
\begin{align*}
&\Big(D_v\Big(D_u\Big[f'(X^{K,M}_{\lfloor s_1 \rfloor}(x_1))f'(X^{K,M}_{\lfloor s_2\rfloor}(x_2))\Big]\Big)^{j_2}\Big)^{j_1}\\
=&\Big(D_v\Big[f''(X^{K,M}_{\lfloor s_1 \rfloor}(x_1))\Big(D_uX^{K,M}_{\lfloor s_1 \rfloor}(x_1)\Big)^{j_2}f'(X^{K,M}_{\lfloor s_2\rfloor}(x_2))\\
&~~+
f'(X^{K,M}_{\lfloor s_1 \rfloor}(x_1))f''(X^{K,M}_{\lfloor s_2\rfloor}(x_2))\Big(D_uX^{K,M}_{\lfloor s_2\rfloor}(x_2)\Big)^{j_2}
\Big]\Big)^{j_1}\\
=&f'''(X^{K,M}_{\lfloor s_1 \rfloor}(x_1))\Big(D_vX^{K,M}_{\lfloor s_1 \rfloor}(x_1)\Big)^{j_1}\Big(D_uX^{K,M}_{\lfloor s_1 \rfloor}(x_1)\Big)^{j_2}f'(X^{K,M}_{\lfloor s_2\rfloor}(x_2))\\
&+f''(X^{K,M}_{\lfloor s_1 \rfloor}(x_1))\Big(D_v\Big(D_uX^{K,M}_{\lfloor s_1 \rfloor}(x_1)\Big)^{j_2}\Big)^{j_1}f'(X^{K,M}_{\lfloor s_2\rfloor}(x_2))\\
&+f''(X^{K,M}_{\lfloor s_1 \rfloor}(x_1))\Big(D_u(X^{K,M}_{\lfloor s_1 \rfloor}(x_1)\Big)^{j_2}f''(X^{K,M}_{\lfloor s_2\rfloor}(x_2))\Big(D_vX^{K,M}_{\lfloor s_2\rfloor}(x_2)\Big)^{j_1}\\
&+f''(X^{K,M}_{\lfloor s_1 \rfloor}(x_1))\Big(D_vX^{K,M}_{\lfloor s_1 \rfloor}(x_1)\Big)^{j_1}f''(X^{K,M}_{\lfloor s_2\rfloor}(x_2))\Big(D_u X^{K,M}_{\lfloor s_2\rfloor}(x_2)\Big)^{j_2}\\
&+f'(X^{K,M}_{\lfloor s_1 \rfloor}(x_1))f'''(X^{K,M}_{\lfloor s_2\rfloor}(x_2))\Big(D_vX^{K,M}_{\lfloor s_2\rfloor}(x_2)\Big)^{j_1}\Big(D_u X^{K,M}_{\lfloor s_2\rfloor}(x_2)\Big)^{j_2}\\
&+f'(X^{K,M}_{\lfloor s_1 \rfloor}(x_1))f''(X^{K,M}_{\lfloor s_2\rfloor}(x_2))\Big(D_v\Big(D_u X^{K,M}_{\lfloor s_2\rfloor}(x_2)\Big)^{j_2}\Big)^{j_1}\\
=&:I^{(1)}_{x_1,x_2}+I_{x_1,x_2}^{(2)}+I_{x_1,x_2}^{(3)}+I_{x_1,x_2}^{(4)}+I_{x_1,x_2}^{(5)}+I_{x_1,x_2}^{(6)},
\end{align*}
which implies that 
\begin{align*}
I_1=&\sum_{i=1}^{M}\int_{0}^{t_{n}}\int_{0}^{t_{n}}e^{-\lambda_i(t_n-s_1)}e^{-\lambda_i(t_n-s_2)}\sum_{j_1=1}^{K}\sum_{j_2=1}^{K}\int_{0}^{T}\int_{\lfloor s_1 \rfloor}^{s_1}\int_{0}^{T}\int_{\lfloor s_2 \rfloor}^{s_2}\\
&\mathbb{E}\Bigg[\int_{0}^{1}\int_{0}^{1}\Big(I^{(1)}_{x_1,x_2}+I_{x_1,x_2}^{(2)}+I_{x_1,x_2}^{(3)}+I_{x_1,x_2}^{(4)}+I_{x_1,x_2}^{(5)}+I_{x_1,x_2}^{(6)}\Big)\\
&\quad \times \Big(S^M_{s_2-\sigma_2}{\bf Q}^\frac12 f_{j_2} \Big)(x_2)\Big(S^M_{s_1-\sigma_1}{\bf Q}^\frac12 f_{j_1}\Big)(x_1)e_i(x_1)  e_i(x_2) dx_1dx_2\Bigg]\\
&\times  \phi(\sigma_2,u)\phi(\sigma_1,v)d\sigma_2dud\sigma_1dvds_1ds_2.
\end{align*}
Since for any $r>\frac12$ and $\varphi_1,\varphi_2\in \dot{V}^{r}$, the pointwise multiplication $\varphi_1\cdot \varphi_2$ satisfies 
\begin{align*}
\|\varphi_1\cdot \varphi_2\|_{ \dot{V}^{r}}\le \|\varphi_1\|_{ \dot{V}^{r}} \|\varphi_2\|_{ \dot{V}^{r}} ,
\end{align*}
it follows from Lemma \ref{lm-esti-M} and Assumption \ref{F} that for any $\frac12<r<2$, 
\begin{align*}
&\left|\int_{0}^{1}\int_{0}^{1}
I^{(1)}_{x_1,x_2}
\Big(S^M_{s_2-\sigma_2}{\bf Q}^\frac12 f_{j_2} \Big)(x_2)\Big(S^M_{s_1-\sigma_1}{\bf Q}^\frac12 f_{j_1}\Big)(x_1) e_i(x_1)  e_i(x_2) dx_1dx_2\right|\\
\le &C\Big\|\Big(D_vX^{K,M}_{\lfloor s_1 \rfloor}\Big)^{j_1}\Big\|_{ \dot{V}^{r}}\Big\|\Big(D_uX^{K,M}_{\lfloor s_1 \rfloor}\Big)^{j_2}\Big\|_{ \dot{V}^{r}}\Big\|S^M_{s_1-\sigma_1}{\bf Q}^\frac12 f_{j_1}\Big\|_{ \dot{V}^{r}}
\Big\|S^M_{s_2-\sigma_2}{\bf Q}^\frac12 f_{j_2}\Big\|_{ \dot{V}^{r}}\\
\le &C \big\|{\bf Q}^\frac12 f_{j_1}\big\|_{ \dot{V}^{r}}^2\big\|{\bf Q}^\frac12 f_{j_2}\big\|_{ \dot{V}^{r}}^2
\end{align*}
and 
\begin{align*}
&\left|\int_{0}^{1}\int_{0}^{1}
I^{(2)}_{x_1,x_2}
\Big(S^M_{s_2-\sigma_2}{\bf Q}^\frac12 f_{j_2} \Big)(x_2)\Big(S^M_{s_1-\sigma_1}{\bf Q}^\frac12 f_{j_1}\Big)(x_1) e_i(x_1)  e_i(x_2) dx_1dx_2\right|\\
\le &C\Big\|\Big(D_v\Big(D_uX^{K,M}_{\lfloor s_1 \rfloor}\Big)^{j_2}\Big)^{j_1}\Big\|_{V}\Big\|S^M_{s_1-\sigma_1}{\bf Q}^\frac12 f_{j_1}\Big\|_{V}
\Big\|S^M_{s_2-\sigma_2}{\bf Q}^\frac12 f_{j_2}\Big\|_{V}\\
\le &C \big\|{\bf Q}^\frac12 f_{j_1}\big\|_{ V}^2\big\|{\bf Q}^\frac12 f_{j_2}\big\|_{V}^2.
\end{align*}
Using similar arguments for $I^{(3)},I^{(4)},I^{(5)}$ and $I^{(6)}$, we obtain that for $\gamma>\frac32$, 
\begin{align*}
|I_1|
\le &C\|A^{\frac{\gamma-1}{2}}\|^4_{\mathcal{L}_2^0}\sum_{i=1}^{\infty}\int_{0}^{t_{n}}\int_{0}^{t_{n}}e^{-\lambda_i(t_n-s_1)}e^{-\lambda_i(t_n-s_2)}\\
&\times\Bigg[\int_{0}^{T}\int_{\lfloor s_1 \rfloor}^{s_1}\int_{0}^{T}\int_{\lfloor s_2 \rfloor}^{s_2}
\phi(\sigma_2,u)d\sigma_2du\phi(\sigma_1,v)d\sigma_1dv\Bigg]ds_1ds_2.
\end{align*}

It then suffices to prove
\begin{align}\label{J13_1}
\sum_{i=1}^{\infty}&\int_{0}^{t_{n}}\int_{0}^{t_{n}}e^{-\lambda_i(t_n-s_1)}e^{-\lambda_i(t_n-s_2)}\\
&\times\Bigg[\int_{0}^{T}\int_{\lfloor s_1 \rfloor}^{s_1}\int_{0}^{T}\int_{\lfloor s_2 \rfloor}^{s_2}
\phi(\sigma_2,u)d\sigma_2du\phi(\sigma_1,v)d\sigma_1dv\Bigg]ds_1ds_2\le Ch^2\nonumber
\end{align}
and 
\begin{align}\label{J13_2}
\sum_{i=1}^{\infty}\int_{0}^{t_{n}}\int_{0}^{t_{n}}e^{-\lambda_i(t_n-s_1)}e^{-\lambda_i(t_n-s_2)}\int_{\lfloor s_1 \rfloor}^{s_1}
\int_{\lfloor s_2 \rfloor}^{s_2} \phi(\sigma_1,\sigma_2)d\sigma_2d\sigma_1ds_1ds_2\le Ch^2.
\end{align}
Since
\begin{align*}
\int_{0}^{T}\int_{\lfloor s_1 \rfloor}^{s_1}|\sigma_1-v|^{2H-2}d\sigma_1dv=
\int_{\lfloor s_1 \rfloor}^{s_1}\int_{0}^{T}|\sigma_1-v|^{2H-2}dvd\sigma_1
\le C(H,T)h,
\end{align*}
we obtain \eqref{J13_1} by
\begin{align*}
\int_{0}^{t_n}e^{-\lambda_i(t_n-s_1)}ds_1=\frac{1-e^{-\lambda_i t_n}}{\lambda_i}\le \frac{1}{\lambda_i}=\frac{1}{i^2\pi^2}.
\end{align*}
Defining $\lceil t \rceil:=t_{i+1}$, for $t\in(t_i,t_{i+1}]$, we get 
\begin{align*}
& \int_{0}^{t_n}\int_{0}^{t_n}\int_{\lfloor s_1 \rfloor}^{s_1}
\int_{\lfloor s_2 \rfloor}^{s_2}e^{-\lambda_i(t_n-s_1)}e^{-\lambda_i(t_n-s_2)}
|\sigma_1-\sigma_2|^{2H-2}d\sigma_2d\sigma_1ds_1ds_2\\
= & \int_{0}^{t_n}\int_{0}^{t_n}\int_{\sigma_1}^{\lceil \sigma_1\rceil}
\int_{\sigma_2}^{\lceil \sigma_2\rceil}e^{-\lambda_i(t_n-s_1)}e^{-\lambda_i(t_n-s_2)}
|\sigma_1-\sigma_2|^{2H-2}ds_2ds_1d\sigma_2d\sigma_1\\
= & \int_{0}^{t_n}\int_{\sigma_1}^{\lceil \sigma_1\rceil}e^{-\lambda_i(t_n-s_1)}
\Bigg[\int_{0}^{t_n}\int_{\sigma_2}^{\lceil \sigma_2\rceil}e^{-\lambda_i(t_n-s_2)}
|\sigma_1-\sigma_2|^{2H-2}ds_2d\sigma_2\Bigg]ds_1d\sigma_1\\
= & \int_{0}^{t_n}\int_{\sigma_1}^{\lceil \sigma_1\rceil}e^{-\lambda_i(t_n-s_1)}
\Bigg[\int_{0}^{t_n}
|\sigma_1-\sigma_2|^{2H-2}\Bigg[\int_{\sigma_2}^{\lceil \sigma_2\rceil}e^{-\lambda_i(t_n-s_2)}ds_2\Bigg]d\sigma_2\Bigg]ds_1d\sigma_1\\
\le& C(H,T)h\int_{0}^{t_n}e^{-\lambda_i(t_n-s_1)}\bigg[\int_{\lfloor s_1\rfloor}^{s_1}d\sigma_1\bigg]ds_1
\le  C\frac{h^2}{i^2\pi^2},
\end{align*}
which is summable with respect to $i$ from $1$ to infinity, and then \eqref{J13_2} holds.

Collecting the above estimates finishes the proof.
\end{proof}

\begin{remark}
	If one uses the temporal H\"older continuity of $X^{K,M}$ to estimate $J_1$ directly, then the convergence order will be restricted by $H$.
\end{remark}

\begin{proof}[Proof of Theorem \ref{main}]
Combining Theorems \ref{tm-3}, \ref{tm-4} and \ref{tm-6}, we obtain the conclusion of Theorem \ref{main}, which is the main result of this paper.

\end{proof}

\section{Concluding remarks}\label{sec6}
In this paper, we present the strong convergence order of the exponential integrator for the SHE driven by an infinite dimensional fractional Brownian motion with Hurst parameter $H\in(\frac12,1)$ is $1$ under Assumptions \ref{x0}-\ref{Q} with $\gamma>\max\{3-2H,\frac32\}$, which establishes the first super-convergence result in temporal direction on full discretizations for SPDEs driven by infinite dimensional fractional Brownian motions with Hurst parameter $H\in(\frac12,1)$. The main idea is to utilize Malliavin calculus to estimate the stochastic integral with respect to fBm.

Noticing that the optimal strong order $H$ of accuracy in temporal direction is achieved as long as $\gamma\ge1$, we conjecture that the strong convergence order of the exponential integrator when $1<\gamma\le \max\{3-2H,\frac32\}$ is between $H$ and $1$. Our further work is to investigate the concrete relationship between $\gamma$ and the strong convergence order in the case of $1<\gamma\le \max\{3-2H,\frac32\}$. Due to the lack of the Burkholder--Davis--Gundy inequality for SPDEs driven by fBms, more efforts should be paid to develop new techniques to deal with the associated stochastic integrals. 

For rougher case $H\in(0,\frac12)$, since the kernel of the covariance of fBm is singular, the rough path theory needs to be employed in numerical analysis.
As for general multiplicative noises, the estimates for Malliavin derivatives of the exact solution are more complicated and the strong convergence order of the full discretization is still unsolved for SPDEs driven by infinite dimensional fBms. We will leave these topics as future works.

\bibliographystyle{plain}
\bibliography{bib}

\end{document}